\theoremstyle{plain}
\newtheorem{theorem}{Theorem}[section]
\newtheorem{lemma}[theorem]{Lemma}
\newtheorem{proposition}[theorem]{Proposition}
\newtheorem{corollary}[theorem]{Corollary}
\theoremstyle{definition}
\newtheorem{definition}[theorem]{Definition}
\newtheorem{problem}[theorem]{Problem}
\theoremstyle{remark}
\newtheorem{remark}[theorem]{Remark}
\newcommand{\CC}{\mathbb{C}}
\newcommand{\ZZ}{\mathbb{Z}}
\newcommand{\NN}{\mathbb{N}}
\newcommand{\QQ}{\mathbb{Q}}
\newcommand{\FF}{\mathbb{F}}
\newcommand{\Rad}{\operatorname{Rad}}
\newcommand{\rank}{\operatorname{rank}}
\newcommand{\Mat}{\text{Mat}}
\newcommand{\Span}{\text{Span}}
\newcommand{\Fl}{\mathcal{F}}
\newcommand{\sX}{\mathcal X}
\newcommand{\sY}{\mathcal Y}
\title{Frame Numbers and Jacobson Radicals for Partial Geometries and Related Coherent Configurations}
\author{Osamu Shimabukuro\thanks{Department of Mathematics, Faculty of Education, Gifu Shotoku Gakuen University, Japan. Email: \texttt{shimabukuro@gifu.shotoku.ac.jp}}}
\date{\today}
\begin{document}

\maketitle
\begin{abstract}
We study the modular representation theory of rank $3$ association schemes arising from partial geometries with parameters $(s,t,\alpha)$. First, we obtain an explicit closed formula for the Frame number of the point scheme in terms of the number of points $v$ and the parameter $s+t+1-\alpha$, and use it to characterize the primes $p$ for which the adjacency algebra over $\mathbb{F}_p$ is not semisimple. We then give a complete case-by-case description of the Jacobson radical of this algebra in four arithmetic situations and determine the generic $p$-ranks of the adjacency matrices.

As a step toward understanding the modular representation theory of coherent configurations of type $[3,2;3]$ associated with strongly regular designs, we analyze the relationship between the modular structure of the point scheme and that of the design algebra. For the generalized quadrangle $\mathrm{GQ}(2,2)$ we obtain partial results on the structure of the $2$-modular adjacency algebra $\mathbb{F}_2 \sX$, and we explain the representation-theoretic difficulties that prevent a complete determination of its Wedderburn decomposition and Gabriel quiver, which remains open and is formulated as Problem~\ref{prob:GQ-A2-quiver}.

\medskip

\noindent \textbf{Keywords:} coherent configuration; partial geometry; modular representation; frame number; p-rank; generalized quadrangle
\medskip

\noindent \textbf{2020 MSC:} 05E30, 05B25, 20C20
\end{abstract}
\section{Introduction}\label{sec:intro}

Coherent configurations, introduced by Higman as a combinatorial abstraction of the
orbit structure of a permutation group, provide a common framework for association
schemes, strongly regular graphs, designs, and related incidence geometries; see, for
example, \cite{Bose,BCN,BrouwerHaemers,GodsilRoyle,HigmanSRD}. In this
setting, the adjacency algebra $\FF_p\sX$ of a coherent configuration $\sX$ plays the
role of a ``centralizer algebra'' of a permutation representation and can be studied by
the methods of ordinary and modular representation theory. The present paper is
concerned with coherent configurations of type $[3,2;3]$ arising from strongly regular
designs, and with the modular representation theory of their adjacency algebras
$\FF_p\sX$.

A strongly regular design (SRD) in the sense of Higman \cite{HigmanSRD} is a $1$-design
$(P,B,\Fl)$ with two block–intersection numbers and a strong regularity condition
relating the point graph and block graph. Such a design gives rise in a natural way to
a coherent configuration $\sX$ of type $[3,2;3]$ on the disjoint union
$X:=P\sqcup B$, whose fibers are the point set $P$ and the block set $B$. Higman
derived a complete system of parameter equations for SRDs and described the
intersection numbers of the associated coherent configuration $\sX$, but many details
were left to the reader with the remark that they are ``routine'' \cite{HigmanSRD}.
Recently, Hanaki revisited these parameter conditions, provided full proofs, and
pointed out and corrected a misstatement in Higman's list (in particular,
equation~(15) on p.~414), thereby putting the theory of SRDs and their coherent
configurations of type $[3,2;3]$ on a rigorous footing; see the note
\cite{HanakiData}.

On the representation-theoretic side, the semisimplicity and modular representation
type of adjacency algebras of association schemes are controlled by the Frame number.
For commutative association schemes, Jacob \cite{JacobThesis} and Hanaki
\cite{HanakiFrame,HanakiModularRep} showed that the adjacency algebra over a field of
characteristic $p$ is semisimple if and only if $p$ does not divide the Frame number
of the scheme. This criterion refines earlier work of Arad–Fisman–Muzychuk on table
algebras \cite{AradFismanMuzychuk} and fits into the general Delsarte theory of
association schemes \cite{BCN,BrouwerHaemers,GodsilRoyle}. In parallel, Sharafdini
extended the notion of Frame number and the corresponding semisimplicity criterion
from association schemes to arbitrary coherent configurations \cite{Sharafdini},
building on the detailed study of balanced coherent configurations by Hirasaka and
Sharafdini \cite{HirasakaSharafdini}.

In previous work with Hanaki and Miyazaki the author studied the adjacency algebras of
BIB designs (that is, rank~$3$ commutative association schemes) and determined their
modular representation type in terms of the Frame number and $p$-ranks of incidence
matrices \cite{HanakiMiyazakiShimabukuroBIB}. More recently, the author analyzed in
detail the case of symmetric $2$-designs and obtained a complete description of the
complex and modular representations of the corresponding design algebras
$\FF_p\sY$, together with explicit formulas for $p$-ranks and Jacobson radicals
\cite{ShimabukuroSym2}. The present paper is a natural extension of these results from
rank~$3$ association schemes (one fiber) to coherent configurations of type $[3,2;3]$
(two fibers): we apply the same representation-theoretic viewpoint to the ``design
algebra'' $\FF_p\sX$ of an SRD and, in particular, to the coherent configurations
coming from partial geometries.

A main source of examples is provided by partial geometries $pg(s,t,\alpha)$, whose
point graph is a strongly regular graph with well-known parameters
\cite{Bose,BCN,BrouwerHaemers,HaemersPg}. For such a geometry, the point graph
carries a rank~$3$ association scheme $\sY$, while the incidence structure $(P,B,\Fl)$
of points and lines yields an SRD and hence a coherent configuration $\sX$ of type
$[3,2;3]$. In this situation the adjacency algebra $\FF_p\sX$ contains the Bose–Mesner
algebra $\FF_p\sY$ as a subalgebra, and one can compare the Frame numbers
$F_{\mathrm{AS}}(\sY)$ and $F_{\mathrm{CC}}(\sX)$ and the corresponding modular
representation theories. The $p$-ranks of incidence and adjacency matrices of designs
and geometries, as studied for instance by Hamada \cite{HamadaPrank} and Moorhouse
\cite{MoorhouseHermitianVarieties}, play an important role in this comparison.

The main results of the paper can be summarized as follows. In
Section~\ref{sec:srd-cc} we review Higman's strongly regular designs and the associated
coherent configurations of type $[3,2;3]$, following Hanaki's corrected version of the
parameter equations. We describe the intersection numbers of $\sX$ and show that the
complex adjacency algebra $\CC\sX$ decomposes as
\[
\CC\sX \;\cong\; \CC\;\oplus\;\CC\;\oplus\;M_2(\CC)\;\oplus\;M_2(\CC),
\]
so that $\sX$ has exactly four irreducible complex representations and we may regard
$\CC\sX$ as a ``design algebra'' of matrix type.

In Section~\ref{sec:frame-pg} we specialize to the case where $\sX$ comes from
a partial geometry $pg(s,t,\alpha)$.
Using the eigenvalues $k,r,s'$ and their multiplicities $1,f,g$ of the point graph, we compute explicitly the Frame number of the associated rank~$3$ association scheme $\sY$ and show that
\[
F_{\mathrm{AS}}(\sY) \;=\; v^2\,(s+t+1-\alpha)^2,
\]
where $v$ is the number of points of the geometry. Combining this with the general
theory of Frame numbers \cite{JacobThesis,HanakiFrame}, we obtain a complete
classification of the primes $p$ for which the adjacency algebra $\FF_p\sY$ is not
semisimple, and we describe the Jacobson radical $\Rad(\FF_p\sY)$ in terms of the
all-ones matrix and a quadratic polynomial in the adjacency matrix. This leads to an
explicit upper bound $\dim_{\FF_p}\Rad(\FF_p\sY)\le 2$ and a corresponding bound on
the number of arrows in the Gabriel quiver of $\FF_p\sY$.

Section~\ref{sec:prank} is devoted to the $p$-ranks of the adjacency matrix of the
point graph. Under a mild non-degeneracy assumption on the reductions modulo $p$ of
the eigenvalues $k,r,s'$, we determine the generic $p$-rank of the point graph and
show that it is given by $v$, $v-1$, $v-f$ or $v-g$ according to which eigenvalue
vanishes modulo $p$. This provides a uniform description of the $p$-ranks of partial
geometries in terms of their classical parameters and clarifies the relation with
earlier $p$-rank formulas for incidence matrices of designs.

In Section~\ref{sec:design-alg-2} we bring in Sharafdini's theory of Frame numbers for
coherent configurations \cite{Sharafdini} and apply it to the design algebra
$\FF_p\sX$ of type $[3,2;3]$. We relate the Frame number $F_{\mathrm{CC}}(\sX)$ to
$F_{\mathrm{AS}}(\sY)$ and obtain general constraints on the Jacobson radical and the
semisimple quotient of $\FF_p\sX$. In particular, for $p=2$ we prove that the radical
of the point graph algebra embeds into the radical of the design algebra and we derive
upper bounds on the dimension of the semisimple quotient of $\FF_2\sX$.

Finally, in Section~\ref{sec:gq22} we specialize to the smallest non-trivial example, the generalized quadrangle $\mathrm{GQ}(2,2)=\mathrm{pg}(2,2,1)$. Its point graph is the
strongly regular graph with parameters $(v,k,\lambda,\mu)=(15,6,1,3)$, and we compute
the corresponding Frame numbers
\[
F_{\mathrm{AS}}(\sY)=15^2\cdot 4^2=3600
\]
and identify precisely the primes $p\in\{2,3,5\}$ for which $\mathbb{F}_p \sY$ fails to be
semisimple. For each such prime we describe the radical $\mathrm{Rad}(\mathbb{F}_p \sY)$ explicitly, and then use this information to obtain structural restrictions on the $2$-modular design algebra $\mathbb{F}_2X$, including bounds on the dimension of its radical and the number of simple modules. A complete determination of the Wedderburn decomposition and the Gabriel quiver of $\mathbb{F}_2 \sX$ for $\mathrm{GQ}(2,2)$ remains open and is formulated as Problem~\ref{prob:GQ-A2-quiver}; in Remark~\ref{rem:GQ22-summary} we summarize what is currently known about the $2$-modular structure of $\mathbb{F}_2 \sX$ and explain the representation-theoretic obstacles that prevent a complete solution at present.

\section{Strongly regular designs and coherent configurations of type {[3,2;3]}}\label{sec:srd-cc}

In this section we recall Higman's notion of a strongly regular design, introduce the
associated point and block graphs and their incidence matrices, and construct from them
a coherent configuration of type $[3,2;3]$. Our terminology follows Higman
\cite{HigmanSRD} and standard references on designs, strongly regular graphs and
partial geometries \cite{Bose,BCN,BrouwerHaemers,GodsilRoyle,HaemersPg}. 
% We also take into account the corrections and clarifications given by Hanaki in
% \cite{HanakiData}.

\subsection{Incidence structures and strongly regular designs}

We begin with basic notation for incidence structures.

\begin{definition}[Incidence structure]\label{def:incidence-structure}
Let $P$ and $B$ be finite nonempty sets, whose elements are called \emph{points} and
\emph{blocks}, respectively. A subset
\[
\Fl \subseteq P\times B
\]
is called the set of \emph{flags}, and the triple $\mathcal{D}=(P,B,\Fl)$ is called a
\emph{finite incidence structure}. We write $(x,Y)\in\Fl$ also as $x\in Y$.

For $x\in P$ and $Y\in B$ we define
\[
\Fl(x):=\{Y\in B\mid (x,Y)\in\Fl\},\qquad
\Fl^{-1}(Y):=\{x\in P\mid (x,Y)\in\Fl\}.
\]
\end{definition}

Thus $\Fl(x)$ is the set of blocks containing $x$, and $\Fl^{-1}(Y)$ is the set of points
of the block $Y$.

\begin{definition}[Strongly regular design]\label{def:srd}
A finite incidence structure $\mathcal{D}=(P,B,\Fl)$ is called a
\emph{strongly regular design} (SRD) if there exist integers
\[
s_1,s_2\ge 1,\quad a_1>b_1\ge 0,\quad a_2>b_2\ge 0,\quad
N_1,P_1,N_2,P_2\ge 0
\]
such that the following conditions hold.
\begin{enumerate}[(1)]
\item (Regularity)
\[
|\Fl(x)| = s_2 \quad\text{for all }x\in P,\qquad
|\Fl^{-1}(Y)| = s_1 \quad\text{for all }Y\in B.
\]
\item (Two block intersection numbers) For any distinct blocks $Y_1,Y_2\in B$ one has
\[
\bigl|\Fl^{-1}(Y_1)\cap \Fl^{-1}(Y_2)\bigr|\in\{a_1,b_1\},
\]
and both values $a_1$ and $b_1$ occur.
\item (Two point intersection numbers) For any distinct points $x_1,x_2\in P$ one has
\[
\bigl|\Fl(x_1)\cap \Fl(x_2)\bigr|\in\{a_2,b_2\},
\]
and both values $a_2$ and $b_2$ occur.
\item (Local regularity on the point side) For every point $x\in P$ and block $Y\in B$,
the number of points $x'\in P$ satisfying $x'\in Y$ and
$\lvert \Fl(x)\cap \Fl(x')\rvert=a_2$ depends only on whether $x\in Y$, namely
\[
\#\{x'\in P\mid x'\in Y,\ \lvert \Fl(x)\cap \Fl(x')\rvert=a_2\}
=
\begin{cases}
N_1 & \text{if }x\in Y,\\
P_1 & \text{if }x\notin Y.
\end{cases}
\]
\item (Local regularity on the block side) For every point $x\in P$ and block $Y\in B$,
the number of blocks $Y'\in B$ satisfying $x\in Y'$ and
$\lvert \Fl^{-1}(Y)\cap \Fl^{-1}(Y')\rvert=a_1$ depends only on whether $x\in Y$, namely
\[
\#\{Y'\in B\mid x\in Y',\ \lvert \Fl^{-1}(Y)\cap \Fl^{-1}(Y')\rvert=a_1\}
=
\begin{cases}
N_2 & \text{if }x\in Y,\\
P_2 & \text{if }x\notin Y.
\end{cases}
\]
\item The dual incidence structure $(B,P,\Fl^\top)$, where
$\Fl^\top:=\{(Y,x)\mid (x,Y)\in \Fl\}$, also satisfies {\rm(1)}–{\rm(5)} with the same
parameters.
\end{enumerate}
\end{definition}

The parameters
\[
n_1:=|P|,\quad n_2:=|B|,\quad s_1,s_2,a_1,b_1,a_2,b_2,N_1,P_1,N_2,P_2
\]
satisfy a system of polynomial relations obtained by double counting; see Higman
\cite{HigmanSRD} for the original derivation and Hanaki \cite{HanakiData} for a
complete and corrected list of these \emph{parameter equations}. In particular,
conditions (2)–(3) imply that the point and block graphs introduced below are strongly
regular, and conditions (4)–(5) express a compatibility between the incidence structure
and these graphs.

\subsection{Point and block graphs, and basic matrix identities}

Let $\mathcal{D}=(P,B,\Fl)$ be an SRD as in Definition~\ref{def:srd}. We now define the
associated point and block graphs.

\begin{definition}[Point and block graphs]\label{def:point-block-graphs}
The \emph{point graph} of $\mathcal{D}$ is the simple graph
\[
\Gamma_1=(P,E_1),
\]
where distinct points $x,y\in P$ are adjacent if and only if
\[
\lvert \Fl(x)\cap \Fl(y)\rvert = a_2.
\]
The \emph{block graph} of $\mathcal{D}$ is the simple graph
\[
\Gamma_2=(B,E_2),
\]
where distinct blocks $Y,Z\in B$ are adjacent if and only if
\[
\lvert \Fl^{-1}(Y)\cap \Fl^{-1}(Z)\rvert = a_1.
\]
\end{definition}

By Definition~\ref{def:srd}(2),(3),(6), both $\Gamma_1$ and $\Gamma_2$ are strongly
regular graphs; see \cite{BrouwerHaemers,GodsilRoyle,BCN} for background on strongly
regular graphs. We denote by $A_1$ and $A_2$ the adjacency matrices of $\Gamma_1$ and
$\Gamma_2$, indexed by $P$ and $B$, respectively.

Let $N$ be the $n_1\times n_2$ incidence matrix of $\mathcal{D}$, whose rows are indexed
by $P$ and columns by $B$, with entries
\[
N_{xY}=
\begin{cases}
1 & \text{if }(x,Y)\in \Fl,\\
0 & \text{otherwise}.
\end{cases}
\]
We write $I_P,I_B$ for the identity matrices of sizes $n_1$ and $n_2$, and $J_P,J_B,
J_{P,B}$ for the all-one matrices of sizes $n_1\times n_1$, $n_2\times n_2$ and
$n_1\times n_2$, respectively.

The following identities are elementary consequences of the defining conditions of an
SRD and will be needed later.

\begin{proposition}[Basic matrix identities]\label{prop:basic-matrix-identities}
Let $\mathcal{D}=(P,B,\Fl)$ be a strongly regular design, with incidence matrix $N$ and
adjacency matrices $A_1,A_2$ as above. Then:
\begin{enumerate}[\rm(i)]
\item
\[
N\mathbf{1}_B = s_2\,\mathbf{1}_P,\qquad
N^{\top}\mathbf{1}_P = s_1\,\mathbf{1}_B,
\]
where $\mathbf{1}_P$ and $\mathbf{1}_B$ are the all-one column vectors on $P$ and $B$.
\item
\[
NN^{\top}
=
s_2 I_P + a_2 A_1 + b_2\bigl(J_P - I_P - A_1\bigr).
\]
\item
\[
N^{\top}N
=
s_1 I_B + a_1 A_2 + b_1\bigl(J_B - I_B - A_2\bigr).
\]
\item
\[
A_1 N = (N_1-P_1)\,N + P_1\,J_{P,B}.
\]
\item
\[
N A_2 = (N_2-P_2)\,N + P_2\,J_{P,B}.
\]
\end{enumerate}
\end{proposition}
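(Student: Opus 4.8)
The plan is to prove each of the five identities by a double-counting (entry-wise) argument, translating each defining condition of an SRD directly into a statement about matrix entries.

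First I would dispatch (i): the $(x)$-entry of $N\mathbf{1}_B$ counts $|\Fl(x)|$, which equals $s_2$ by the regularity condition in Definition~\ref{def:srd}(1); dually the $(Y)$-entry of $N^\top\mathbf{1}_P$ counts $|\Fl^{-1}(Y)|=s_1$. Next, for (ii), the $(x,y)$-entry of $NN^\top$ equals $|\Fl(x)\cap\Fl(y)|$. When $x=y$ this is $|\Fl(x)|=s_2$, accounting for the $s_2 I_P$ term. When $x\neq y$, condition (3) forces this intersection number to be $a_2$ or $b_2$; by the definition of the point graph $\Gamma_1$ (Definition~\ref{def:point-block-graphs}), it is $a_2$ exactly when $x\sim y$ in $\Gamma_1$ and $b_2$ otherwise, i.e. when the $(x,y)$-entry of $J_P-I_P-A_1$ is $1$. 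Summing these contributions gives the stated matrix identity; (iii) is identical with the roles of points and blocks exchanged, using condition (2) and the block graph $\Gamma_2$, which is legitimate by condition (6).

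For (iv) and (v) I would again compare entries. The $(x,Y)$-entry of $A_1 N$ equals $\#\{x'\in P : x'\sim x \text{ in }\Gamma_1,\ x'\in Y\} = \#\{x'\in Y : |\Fl(x)\cap\Fl(x')|=a_2\}$ (note $x$ itself is excluded since $A_1$ has zero diagonal, but one should check this causes no discrepancy, as $|\Fl(x)\cap\Fl(x)|=s_2$ and the generic position is $s_2\neq a_2$; the edge cases where $s_2\in\{a_2,b_2\}$ need a remark). By the local regularity condition (4) this count is $N_1$ if $x\in Y$ and $P_1$ if $x\notin Y$; writing this as a linear combination of the entries of $N$ (which is $1$ iff $x\in Y$) and $J_{P,B}$ (always $1$) yields $(N_1-P_1)N_{xY}+P_1$, establishing (iv). Identity (v) follows the same way from condition (5), comparing the $(x,Y)$-entry of $NA_2$, which counts blocks $Y'\ni x$ with $|\Fl^{-1}(Y)\cap\Fl^{-1}(Y')|=a_1$.

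The arguments are entirely routine; the only delicate point — the main obstacle, such as it is — is the bookkeeping around diagonal entries and degenerate parameter coincidences (e.g. whether $s_2$ can equal $a_2$ or $b_2$, or $s_1$ equal $a_1$ or $b_1$). In the standard setup these coincidences are excluded or handled by the convention that the point/block graphs are genuine strongly regular graphs on $|P|,|B|\ge 2$ vertices with the stated adjacency rule, so I would simply note this and observe that the identities hold verbatim; a careful reader can check that even in borderline cases the contributions rearrange correctly because the definition of adjacency in $\Gamma_1,\Gamma_2$ pins down which value of the intersection number is "$a$" and which is "$b$".
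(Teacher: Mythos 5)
Your proposal is correct and follows essentially the same route as the paper's own proof: each identity is verified entry-wise by translating the defining conditions (1)--(5) of a strongly regular design into statements about the entries of $N$, $NN^{\top}$, $N^{\top}N$, $A_1N$ and $NA_2$. Your additional remark on the diagonal term in (iv)--(v) (the possibility $s_2=a_2$, where the literal count in Definition~\ref{def:srd}(4) could include $x'=x$) is a minor refinement of a point the paper passes over silently, but it does not change the argument.
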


\begin{proof}
(i) For $x\in P$,
\[
(N\mathbf{1}_B)_x
=
\sum_{Y\in B} N_{xY}
=
|\Fl(x)|
=
s_2
\]
by Definition~\ref{def:srd}(1). Thus $N\mathbf{1}_B=s_2\mathbf{1}_P$. The identity
$N^{\top}\mathbf{1}_P = s_1\,\mathbf{1}_B$ is proved similarly using
$|\Fl^{-1}(Y)|=s_1$ for all $Y\in B$.

(ii) For $x,y\in P$ we have
\[
(NN^{\top})_{xy}
=
\sum_{Y\in B} N_{xY}N_{yY}
=
|\Fl(x)\cap \Fl(y)|.
\]
If $x=y$ then this equals $|\Fl(x)|=s_2$. If $x\neq y$ and $x,y$ are adjacent in
$\Gamma_1$, then $|\Fl(x)\cap \Fl(y)|=a_2$; otherwise $|\Fl(x)\cap \Fl(y)|=b_2$, by
Definition~\ref{def:srd}(3). The right-hand side in (ii) has exactly this pattern of
entries, so the identity holds.

(iii) is proved by the same argument for blocks: for $Y,Z\in B$,
\[
(N^{\top}N)_{YZ}
=
\sum_{x\in P} N_{xY}N_{xZ}
=
|\Fl^{-1}(Y)\cap \Fl^{-1}(Z)|,
\]
which equals $s_1$ if $Y=Z$, $a_1$ if $Y\neq Z$ and $Y,Z$ are adjacent in $\Gamma_2$,
and $b_1$ otherwise, by Definition~\ref{def:srd}(2).

(iv) Fix $x\in P$ and $Y\in B$. Then
\[
(A_1N)_{xY}
=
\sum_{z\in P} (A_1)_{xz}N_{zY}
=
\#\{z\in P\mid (x,z)\in E_1,\ z\in Y\},
\]
i.e., the number of points $z$ lying in $Y$ and adjacent to $x$ in $\Gamma_1$. By
Definition~\ref{def:srd}(4), this number is $N_1$ if $x\in Y$ and $P_1$ otherwise. Thus
\[
(A_1N)_{xY}
=
(N_1-P_1)\,N_{xY}+P_1,
\]
which is equivalent to the matrix identity in (iv).

(v) is proved similarly, using Definition~\ref{def:srd}(5): for $x\in P$ and $Y\in B$,
\[
(NA_2)_{xY}
=
\sum_{Z\in B} N_{xZ}(A_2)_{ZY}
=
\#\{Z\in B\mid x\in Z,\ (Z,Y)\in E_2\},
\]
which equals $N_2$ if $x\in Y$ and $P_2$ otherwise. Hence
\[
(NA_2)_{xY}
=
(N_2-P_2)\,N_{xY}+P_2,
\]
and (v) follows.
\end{proof}

These relations between $N$, $A_1$ and $A_2$ encode the combinatorial regularity of an
SRD and will be the main input for the representation-theoretic analysis in later
sections; compare, for example, \cite{BrouwerHaemers,HaemersPg,MoorhouseHermitianVarieties} for similar matrix techniques applied to strongly regular
graphs and partial geometries.

\subsection{The coherent configuration of type [3,2;3]}

We now construct the coherent configuration naturally associated with a strongly regular
design. This construction goes back to Higman \cite{HigmanSRD}; see also
\cite{HirasakaSharafdini,Sharafdini} for general background on coherent configurations
and their adjacency algebras.

Let
\[
X := P\sqcup B
\]
be the disjoint union of the point and block sets. We define ten binary relations
$R_1,\dots,R_{10}$ on $X$ by
\begin{align*}
R_1 &:= \{(x,x)\mid x\in P\},\\
R_2 &:= \{(x,y)\in P\times P\mid x\neq y,\ \lvert \Fl(x)\cap \Fl(y)\rvert = a_2\},\\
R_3 &:= \{(x,y)\in P\times P\mid x\neq y,\ \lvert \Fl(x)\cap \Fl(y)\rvert = b_2\},\\[1mm]
R_4 &:= \{(Y,Y)\mid Y\in B\},\\
R_5 &:= \{(Y,Z)\in B\times B\mid Y\neq Z,\ \lvert \Fl^{-1}(Y)\cap \Fl^{-1}(Z)\rvert = a_1\},\\
R_6 &:= \{(Y,Z)\in B\times B\mid Y\neq Z,\ \lvert \Fl^{-1}(Y)\cap \Fl^{-1}(Z)\rvert = b_1\},\\[1mm]
R_7 &:= \{(x,Y)\in P\times B\mid (x,Y)\in \Fl\},\\
R_8 &:= \{(x,Y)\in P\times B\mid (x,Y)\notin \Fl\},\\
R_9 &:= \{(Y,x)\in B\times P\mid (x,Y)\in \Fl\},\\
R_{10} &:= \{(Y,x)\in B\times P\mid (x,Y)\notin \Fl\}.
\end{align*}

Let $\sigma_i$ denote the adjacency matrix of $R_i$ ($1\le i\le 10$), considered as a
$0$–$1$ matrix in $\Mat_X(\CC)$.

\begin{proposition}\label{prop:cc-3-2-3}
The pair
\[
\sX=(X,\{R_i\}_{i=1}^{10})
\]
is a coherent configuration. Its fibers are $P$ and $B$, the homogeneous component on
$P$ has rank $3$, the homogeneous component on $B$ has rank $3$, and there are $4$
mixed relations between the two fibers. In particular, $\sX$ has type $[3,2;3]$ in the
sense of Higman.
\end{proposition}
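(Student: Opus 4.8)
The plan is to verify directly that $\sX=(X,\{R_i\}_{i=1}^{10})$ satisfies the axioms of a coherent configuration: (a) the $R_i$ partition $X\times X$; (b) the diagonal $\{(x,x)\mid x\in X\}$ is a union of some of the $R_i$; (c) the transpose of each $R_i$ is again some $R_j$; and (d) for all $i,j,k$ the number of $z$ with $(x,z)\in R_i$ and $(z,y)\in R_j$ is a constant $p_{ij}^k$ depending only on $i,j,k$ whenever $(x,y)\in R_k$. Axioms (a)–(c) are immediate from the definitions: $X\times X$ splits as $(P\times P)\sqcup(P\times B)\sqcup(B\times P)\sqcup(B\times B)$, on $P\times P$ the three relations $R_1,R_2,R_3$ partition it by Definition~\ref{def:srd}(3) (exactly two off-diagonal intersection numbers $a_2,b_2$ occur), similarly $R_4,R_5,R_6$ partition $B\times B$ by (2), and $R_7,R_8$ (resp.\ $R_9,R_{10}$) partition $P\times B$ (resp.\ $B\times P$) according to incidence; the diagonal is $R_1\cup R_4$; and $R_1^\top=R_1$, $R_2^\top=R_2$, $R_3^\top=R_3$, $R_4^\top=R_4$, $R_5^\top=R_5$, $R_6^\top=R_6$, $R_7^\top=R_9$, $R_8^\top=R_{10}$, so the index set is closed under transposition.

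For axiom (d) it is cleanest to work with the adjacency matrices $\sigma_1,\dots,\sigma_{10}$ and show that the $\CC$-span $\mathcal A$ of $\{\sigma_i\}$ is closed under matrix multiplication, i.e.\ each product $\sigma_i\sigma_j$ is a nonnegative integer combination of the $\sigma_k$'s; the coefficient of $\sigma_k$ is then exactly the intersection number $p_{ij}^k$. Writing the $10$ matrices in block form with respect to $X=P\sqcup B$, we have $\sigma_1=\mathrm{diag}(I_P,0)$, $\sigma_2=\mathrm{diag}(A_1,0)$, $\sigma_3=\mathrm{diag}(J_P-I_P-A_1,0)$, similarly $\sigma_4,\sigma_5,\sigma_6$ supported on the $B$-block, and $\sigma_7,\sigma_8$ (resp.\ $\sigma_9,\sigma_{10}$) supported on the $P\times B$ (resp.\ $B\times P$) off-diagonal block, with $\sigma_7=N$, $\sigma_8=J_{P,B}-N$, $\sigma_9=N^\top$, $\sigma_{10}=J_{B,P}-N^\top$. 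The point-block products reduce to showing that the subspace $\langle I_P,A_1,J_P\rangle$ is a Bose--Mesner algebra (which holds because $\Gamma_1$ is strongly regular, or directly from Proposition~\ref{prop:basic-matrix-identities}(ii)) and likewise $\langle I_B,A_2,J_B\rangle$; the crossed products such as $\sigma_2\sigma_7=A_1N$, $\sigma_7\sigma_5=NA_2$, $\sigma_7\sigma_9=NN^\top$, $\sigma_9\sigma_7=N^\top N$ are handled precisely by parts (ii)–(v) of Proposition~\ref{prop:basic-matrix-identities}, together with the trivial identities $N J_B = s_2 J_{P,B}$, $J_P N = s_1 J_{P,B}$, etc.\ from part (i). Expanding each of the relevant products and collecting terms shows that every $\sigma_i\sigma_j$ lies in $\mathcal A$ with nonnegative integer coefficients, which is axiom (d).

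Finally, once $\sX$ is known to be a coherent configuration, the statement about its type is read off from the block structure: the fibers are the supports of the diagonal blocks $R_1$ and $R_4$, namely $P$ and $B$; the homogeneous component on $P$ consists of $R_1,R_2,R_3$ and so has rank $3$ (and is nontrivial precisely because $a_2\ne b_2$, guaranteed by Definition~\ref{def:srd}(3)), likewise the component on $B$ has rank $3$; and the relations between the two fibers are $R_7,R_8$ from $P$ to $B$ and $R_9,R_{10}$ from $B$ to $P$, i.e.\ $4$ mixed relations (equivalently $2$ in each direction). This is exactly Higman's type $[3,2;3]$: two fibers, each homogeneous component of rank $3$ (hence $2$ after removing the identity in the ``$[3,2;\,]$'' count), with $3$ counting the off-diagonal rank on each side. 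The main obstacle is bookkeeping rather than conceptual: one must check that the handful of genuinely ``mixed'' products ($\sigma$'s with one factor an on-diagonal matrix and the other off-diagonal, or two off-diagonal factors) all close up, and in particular that no product produces a matrix outside $\mathcal A$ — for instance verifying that $A_1 N$ and $N A_2$ have constant row sums on $\Fl$ and on its complement, which is exactly the content of conditions (4)–(5) of Definition~\ref{def:srd} and the reason those local regularity conditions were imposed. Since all needed identities are already collected in Proposition~\ref{prop:basic-matrix-identities}, the proof amounts to a finite, routine verification, and we will simply indicate the key products and invoke that proposition.
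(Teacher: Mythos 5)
Your proposal is correct and follows essentially the same route as the paper's own proof: verify the partition, diagonal, and transpose-closure axioms directly, then establish the existence of intersection numbers by writing the $\sigma_i$ in block form relative to $X=P\sqcup B$ and closing the products using Proposition~\ref{prop:basic-matrix-identities}, finally reading off the type from the block structure. The only cosmetic difference is that you note the coefficients are nonnegative integers rather than merely integers, which is a harmless refinement.
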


\begin{proof}
By construction the relations $R_1,\dots,R_{10}$ are pairwise disjoint and their union
is $X\times X$, so they form a partition of $X\times X$. The diagonal
$\{(u,u)\mid u\in X\}$ is the disjoint union of $R_1$ and $R_4$, hence $P$ and $B$ are
fibers.

The transpose $R_i^\ast$ of $R_i$ is equal to $R_i$ for $i\in\{1,2,3,4,5,6\}$, and
satisfies $R_7^\ast=R_9$ and $R_8^\ast=R_{10}$. Thus the set $\{R_i\}$ is closed under
taking transpose.

It remains to verify the existence of intersection numbers. For $1\le i,j,k\le 10$ and
$(u,v)\in R_k$, set
\[
p_{ij}^k(u,v)
:=
\#\{w\in X\mid (u,w)\in R_i,\ (w,v)\in R_j\}.
\]
We must show that $p_{ij}^k(u,v)$ depends only on $i,j,k$ and not on the specific choice
of $(u,v)\in R_k$.

Let $\sigma_i$ be the adjacency matrix of $R_i$. Then the $(u,v)$–entry of the matrix
product $\sigma_i\sigma_j$ is exactly $p_{ij}^k(u,v)$ whenever $(u,v)\in R_k$, i.e.,
\[
(\sigma_i\sigma_j)_{uv}
=
p_{ij}^k(u,v)
\quad\text{for all }(u,v)\in R_k.
\]
Hence $p_{ij}^k(u,v)$ is constant on $R_k$ if and only if $\sigma_i\sigma_j$ is a
$\ZZ$–linear combination of the matrices $\sigma_1,\dots,\sigma_{10}$. We now check
this closure property using the identities in
Proposition~\ref{prop:basic-matrix-identities}.

Relative to the decomposition $X=P\sqcup B$, each $\sigma_i$ has the block form
\[
\sigma_i=
\begin{pmatrix}
\ast & \ast\\
\ast & \ast
\end{pmatrix},
\]
where the four blocks are of sizes $n_1\times n_1$, $n_1\times n_2$, $n_2\times n_1$,
$n_2\times n_2$. More explicitly,
\[
\sigma_1 = \begin{pmatrix} I_P & 0\\ 0 & 0\end{pmatrix},\quad
\sigma_2 = \begin{pmatrix} A_1 & 0\\ 0 & 0\end{pmatrix},\quad
\sigma_3 = \begin{pmatrix} J_P - I_P - A_1 & 0\\ 0 & 0\end{pmatrix},
\]
\[
\sigma_4 = \begin{pmatrix} 0 & 0\\ 0 & I_B\end{pmatrix},\quad
\sigma_5 = \begin{pmatrix} 0 & 0\\ 0 & A_2\end{pmatrix},\quad
\sigma_6 = \begin{pmatrix} 0 & 0\\ 0 & J_B - I_B - A_2\end{pmatrix},
\]
\[
\sigma_7 = \begin{pmatrix} 0 & N\\ 0 & 0\end{pmatrix},\quad
\sigma_8 = \begin{pmatrix} 0 & J_{P,B}-N\\ 0 & 0\end{pmatrix},\quad
\sigma_9 = \begin{pmatrix} 0 & 0\\ N^{\top} & 0\end{pmatrix},\quad
\sigma_{10} = \begin{pmatrix} 0 & 0\\ J_{P,B}^{\top}-N^{\top} & 0\end{pmatrix}.
\]

Using Proposition~\ref{prop:basic-matrix-identities}, we can express all relevant
products in terms of these matrices.

For example,
\[
\sigma_7\sigma_9
=
\begin{pmatrix} 0 & N\\ 0 & 0\end{pmatrix}
\begin{pmatrix} 0 & 0\\ N^{\top} & 0\end{pmatrix}
=
\begin{pmatrix} NN^{\top} & 0\\ 0 & 0\end{pmatrix},
\]
and by Proposition~\ref{prop:basic-matrix-identities}(ii) we have
\[
NN^{\top}
=
s_2 I_P + a_2 A_1 + b_2(J_P - I_P - A_1),
\]
so
\[
\sigma_7\sigma_9
=
s_2\sigma_1 + a_2\sigma_2 + b_2\sigma_3.
\]
Similarly,
\[
\sigma_9\sigma_7
=
\begin{pmatrix} 0 & 0\\ N^{\top} & 0\end{pmatrix}
\begin{pmatrix} 0 & N\\ 0 & 0\end{pmatrix}
=
\begin{pmatrix} 0 & 0\\ 0 & N^{\top}N\end{pmatrix},
\]
and Proposition~\ref{prop:basic-matrix-identities}(iii) gives
\[
N^{\top}N
=
s_1 I_B + a_1 A_2 + b_1(J_B - I_B - A_2),
\]
so
\[
\sigma_9\sigma_7
=
s_1\sigma_4 + a_1\sigma_5 + b_1\sigma_6.
\]

Next, using Proposition~\ref{prop:basic-matrix-identities}(iv),(v) we obtain, for
instance,
\[
\sigma_2\sigma_7
=
\begin{pmatrix} A_1 & 0\\ 0 & 0\end{pmatrix}
\begin{pmatrix} 0 & N\\ 0 & 0\end{pmatrix}
=
\begin{pmatrix} 0 & A_1N\\ 0 & 0\end{pmatrix}
=
\begin{pmatrix} 0 & (N_1-P_1)N + P_1J_{P,B}\\ 0 & 0\end{pmatrix}
=
(N_1-P_1)\sigma_7 + P_1\sigma_8,
\]
and
\[
\sigma_7\sigma_5
=
\begin{pmatrix} 0 & N\\ 0 & 0\end{pmatrix}
\begin{pmatrix} 0 & 0\\ 0 & A_2\end{pmatrix}
=
\begin{pmatrix} 0 & NA_2\\ 0 & 0\end{pmatrix}
=
\begin{pmatrix} 0 & (N_2-P_2)N + P_2J_{P,B}\\ 0 & 0\end{pmatrix}
=
(N_2-P_2)\sigma_7 + P_2\sigma_8.
\]
The remaining products $\sigma_i\sigma_j$ are handled in the same way, using only the identities in Proposition~\ref{prop:basic-matrix-identities} and the fact that $A_1,A_2,J_P,J_B$ generate the Bose–Mesner algebras of the strongly regular graphs
$\Gamma_1,\Gamma_2$ and the $1$–class schemes $\{I_P,J_P-I_P\}$ and $\{I_B,J_B-I_B\}$, respectively.
We omit the routine details.

We conclude that $\sigma_i\sigma_j$ always lies in the $\ZZ$–span of
$\{\sigma_1,\dots,\sigma_{10}\}$, so the intersection numbers $p_{ij}^k$ exist and
$\sX$ is a coherent configuration. By construction the fibers are exactly $P$
and $B$; on $P$ (resp.\ $B$) the three relations $R_1,R_2,R_3$ (resp.\ $R_4,R_5,R_6$)
give a homogeneous component of rank $3$, and there are four mixed relations
$R_7,R_8,R_9,R_{10}$ between $P$ and $B$. Thus $\sX$ has type $[3,2;3]$ in
Higman's notation \cite{HigmanSRD}.
\end{proof}

We call $\sX$ the \emph{design coherent configuration} of the strongly regular
design $\mathcal{D}$. Its adjacency algebra
\[
\CC\sX
:=
\Span_{\CC}\{\sigma_1,\dots,\sigma_{10}\}
\subseteq \Mat_X(\CC)
\]
will be referred to as the \emph{design algebra} of type $[3,2;3]$. In the subsequent
sections we will determine its complex Wedderburn decomposition and study its modular
representation theory, comparing it with the adjacency algebras considered in
\cite{AradFismanMuzychuk,HanakiModularRep,HanakiFrame,HanakiMiyazakiShimabukuroBIB,ShimabukuroSym2,Sharafdini}.

\section{Frame numbers and Jacobson radicals for partial geometries}\label{sec:frame-pg}
In this section we specialise to the point graph of a partial geometry and determine
the Frame number and the modular Jacobson radical of its rank~$3$ adjacency algebra.
Here and throughout, we denote by $F_{\mathrm{AS}}(\sY)$ the Frame number of an
association scheme $\sY$. Later, for a coherent configuration $\sX$, we will
write $F_{\mathrm{CC}}(\sX)$ for its Frame number in the sense of
Sharafdini~\cite{Sharafdini}.

\subsection{The point graph of a partial geometry}

We recall the standard notation for a partial geometry $pg(s,t,\alpha)$
(see Bose~\cite{Bose} and \cite[Ch.~1]{BCN}).
Let $(P,\mathcal{L})$ be a finite incidence structure with point set $P$
and line set $\mathcal{L}$.
We say that $(P,\mathcal{L})$ is a partial geometry $pg(s,t,\alpha)$ if
\begin{enumerate}[(i)]
\item every line is incident with exactly $s+1$ points;
\item every point is incident with exactly $t+1$ lines;
\item any two distinct points are incident with at most one common line;
\item if a point $x$ is not incident with a line $L$, then there are exactly
$\alpha$ lines through $x$ meeting $L$.
\end{enumerate}
The basic parameters of $pg(s,t,\alpha)$ are well known
(see \cite[Thm.~1.2.1]{BCN} or \cite{HaemersPg}):
\begin{equation}\label{eq:pg-basic}
v:=|P|=\frac{(s+1)(st+\alpha)}{\alpha},\qquad
b:=|\mathcal{L}|=\frac{(t+1)(st+\alpha)}{\alpha}.
\end{equation}

Let $\Gamma$ be the point graph of $pg(s,t,\alpha)$: its vertex set is $P$
and two distinct points are adjacent if and only if they are collinear.
Then $\Gamma$ is a strongly regular graph with parameters
\begin{equation}\label{eq:srg-pg}
\mathrm{srg}(v,k,\lambda,\mu)\quad\text{with}\quad
k=s(t+1),\quad
\lambda=s-1+t(\alpha-1),\quad
\mu=\alpha(t+1)
\end{equation}
(see \cite{Bose,HaemersPg,BCN,BrouwerHaemers,GodsilRoyle}).
Let $A$ be the adjacency matrix of $\Gamma$ and let $J$ denote the all-ones
matrix of size $v$.
The rank~$3$ association scheme
\[
\sY=(P,\{R_0,R_1,R_2\})
\]
is defined by
$R_0=\{(x,x)\mid x\in P\}$, $R_1$ the adjacency relation of $\Gamma$,
and $R_2$ the complement of $R_0\cup R_1$ in $P\times P$.
Let $A_0=I$, $A_1=A$ and $A_2=J-I-A$ be the adjacency matrices of $\sY$.
The complex adjacency algebra $\CC\sY$ is the $\CC$-span of $\{A_0,A_1,A_2\}$.

The eigenvalues of $A$ are determined by the standard quadratic equation for
strongly regular graphs; one convenient way to state the result is as follows.

\begin{lemma}\label{lem:eigs-pg}
Let $\Gamma$ be the point graph of $pg(s,t,\alpha)$.
Then $A$ has three distinct eigenvalues
\[
k,\qquad r:=s-\alpha,\qquad s':=-\, (t+1),
\]
with multiplicities $1,f,g$, respectively.
\end{lemma}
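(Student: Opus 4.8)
The plan is to use the fact that $\Gamma$ is strongly regular with the parameters recorded in \eqref{eq:srg-pg}, and simply read off the eigenvalues and multiplicities from the standard theory. First I would recall that for a connected strongly regular graph $\mathrm{srg}(v,k,\lambda,\mu)$ the adjacency matrix $A$ satisfies the quadratic relation
\[
A^2=kI+\lambda A+\mu(J-I-A),
\]
equivalently $A^2-(\lambda-\mu)A-(k-\mu)I=\mu J$. Restricting to the $k$-eigenspace one gets the valency $k$ as a simple eigenvalue (with eigenvector $\mathbf 1$), and on the orthogonal complement $J$ acts as $0$, so the remaining eigenvalues $\theta$ are the two roots of
\[
\theta^2-(\lambda-\mu)\theta-(k-\mu)=0.
\]

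Next I would substitute the partial–geometry values $k=s(t+1)$, $\lambda=s-1+t(\alpha-1)$, $\mu=\alpha(t+1)$ from \eqref{eq:srg-pg}. A short computation gives
\[
\lambda-\mu=s-1+t(\alpha-1)-\alpha(t+1)=s-1-t-\alpha,
\]
and
\[
k-\mu=s(t+1)-\alpha(t+1)=(s-\alpha)(t+1).
\]
Hence the quadratic becomes $\theta^2-(s-1-t-\alpha)\theta-(s-\alpha)(t+1)=0$, which factors as
\[
(\theta-(s-\alpha))\bigl(\theta+(t+1)\bigr)=0,
\]
so the two non-principal eigenvalues are exactly $r=s-\alpha$ and $s'=-(t+1)$, as claimed. (One should note in passing that $r\neq s'$ in a genuine partial geometry, so there really are three distinct eigenvalues; this is immediate since $r\ge 0>s'$ unless the geometry is degenerate.)

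Finally, for the multiplicities $1,f,g$ of $k,r,s'$, I would invoke the two standard linear conditions: $1+f+g=v$ (the matrix has size $v$) and $\operatorname{tr}A=0$, i.e. $k+fr+gs'=0$. Solving this $2\times 2$ system determines $f$ and $g$ uniquely as rational — hence, being multiplicities, nonnegative integer — numbers; their explicit values are the usual strongly–regular–graph formulas and need not be written out here, since only the eigenvalues $k,r,s'$ and the \emph{names} $1,f,g$ for the multiplicities are used in the sequel. I do not expect any real obstacle: the only point requiring a word of care is confirming that $r$ and $s'$ are genuinely distinct (so that $\sY$ has rank $3$ and not rank $2$), which holds for every non-degenerate $pg(s,t,\alpha)$, and that $f,g$ come out as nonnegative integers, which is automatic from the general theory of strongly regular graphs.
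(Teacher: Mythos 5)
Your proposal is correct and follows essentially the same route as the paper: both obtain the non-principal eigenvalues as roots of the standard quadratic $x^2-(\lambda-\mu)x+(\mu-k)=0$, substitute the $pg(s,t,\alpha)$ parameters to factor it as $(x-(s-\alpha))(x+(t+1))$, and pin down $f,g$ from the relations $1+f+g=v$ and $k+fr+gs'=0$ (the paper lists a third trace relation, but it is not needed for uniqueness). Your extra remarks on distinctness of $r,s'$ and integrality of the multiplicities are fine and consistent with the paper's (implicit) non-degeneracy assumptions.
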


\begin{proof}
For any strongly regular graph $\mathrm{srg}(v,k,\lambda,\mu)$ the
nontrivial eigenvalues $r,s'$ are the roots of
\[
x^2-(\lambda-\mu)x+(\mu-k)=0
\]
(see \cite[§1.3]{BCN} or \cite[§9.1]{GodsilRoyle}).
Substituting \eqref{eq:srg-pg} gives
\[
x^2-\bigl((s-1+t(\alpha-1))-\alpha(t+1)\bigr)x
+ \bigl(\alpha(t+1)-s(t+1)\bigr)=0,
\]
that is,
\[
x^2-(s-\alpha-t-1)x-(t+1)(s-\alpha)=0.
\]
It is immediate that $x=r:=s-\alpha$ and $x=s':=-\,(t+1)$ are two distinct
roots, and hence they are precisely the remaining eigenvalues of $A$.

The multiplicities $f,g$ are determined uniquely by the usual trace relations
\[
1+f+g=v,\qquad
k+fr+gs'=0,\qquad
k^2+fr^2+gs'^2=vk
\]
(see \cite[§1.3]{BCN}, \cite[§3.2]{BrouwerHaemers}), and so they exist and are
well defined.
\end{proof}

\subsection{Frame numbers of the rank \texorpdfstring{$3$}{3} scheme}

We now recall the notion of the Frame number for a commutative association scheme.
Let $\sY=(P,\{R_i\}_{i=0}^d)$ be a commutative association scheme of order $v$
with valencies $k_i$ and adjacency matrices $A_i$.
Let $m_i$ be the multiplicity of the $i$-th primitive idempotent
in the standard decomposition of $\CC\sY$.
Following Frame, Arad–Fisman–Muzychuk, Jacob and Hanaki
\cite{AradFismanMuzychuk,JacobThesis,HanakiFrame,HanakiModularRep},
we define:

\begin{definition}\label{def:frame-number}
The \emph{Frame number} of $\sY$ is
\[
F_{\mathrm{AS}}(\sY)\;:=\;
v^{d+1}\,
\frac{\prod_{i=0}^d k_i}{\prod_{i=0}^d m_i}\;\in\;\ZZ.
\]
\end{definition}

For a rank~$3$ scheme $\sY$ the parameters specialise to
\[
d=2,\qquad
k_0=1,\ k_1=k,\ k_2=v-1-k,\qquad
m_0=1,\ m_1=f,\ m_2=g,
\]
so that
\begin{equation}\label{eq:frame-rank3}
F_{\mathrm{AS}}(\sY)\;=\;v^3\,\frac{k(v-1-k)}{fg}.
\end{equation}

We first record a convenient expression for the multiplicities $f,g$
valid for any strongly regular graph.

\begin{lemma}\label{lem:mult-srg}
Let $\Gamma$ be a strongly regular graph with parameters
$\mathrm{srg}(v,k,\lambda,\mu)$ and eigenvalues $k,r,s'$ as above,
with multiplicities $1,f,g$.
Then
\begin{equation}\label{eq:mult-srg}
f=\frac{-k-(v-1)s'}{r-s'},
\qquad
g=\frac{-k-(v-1)r}{s'-r}.
\end{equation}
\end{lemma}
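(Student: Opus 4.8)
The plan is to derive \eqref{eq:mult-srg} directly from the three standard trace relations for a strongly regular graph that were already quoted in the proof of Lemma~\ref{lem:eigs-pg}, namely
\[
1+f+g=v,\qquad k+fr+gs'=0,\qquad k^2+fr^2+gs'^{\,2}=vk.
\]
Only the first two of these are needed. First I would rewrite the first relation as $g=v-1-f$ and substitute into the second, obtaining $k+fr+(v-1-f)s'=0$, i.e.\ $f(r-s')=-k-(v-1)s'$. Since $r\neq s'$ for a genuine strongly regular graph (the two nontrivial eigenvalues are distinct, as noted in Lemma~\ref{lem:eigs-pg}), we may divide by $r-s'$ to get the stated formula for $f$. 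By the symmetric manipulation — solving instead for $g$ from $f=v-1-g$ — one finds $g(s'-r)=-k-(v-1)r$, which gives the formula for $g$.

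For completeness I would add a remark that the third trace relation $k^2+fr^2+gs'^{\,2}=vk$ is automatically consistent with these values; this is where the identity $rs'=\mu-k$ and $r+s'=\lambda-\mu$ (from the quadratic $x^2-(\lambda-\mu)x+(\mu-k)=0$) enter, but since the statement only asserts the existence of the closed forms \eqref{eq:mult-srg}, this verification can be omitted or relegated to a one-line aside referencing \cite[§1.3]{BCN} or \cite[§3.2]{BrouwerHaemers}.

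There is essentially no obstacle here: the argument is two lines of linear algebra in the unknowns $f,g$, and the only hypothesis that must be invoked is $r\neq s'$, which is part of the standing assumption that $\Gamma$ is strongly regular (equivalently, that $\sY$ genuinely has rank~$3$). The only thing to be mildly careful about is the sign convention: writing the denominators as $r-s'$ and $s'-r$ respectively keeps both multiplicities manifestly of the form "numerator over a nonzero difference of eigenvalues" and matches the way the formula will be applied in the next subsection, where $r=s-\alpha>s'=-(t+1)$ so that $r-s'=s+t+1-\alpha$ is exactly the quantity appearing in the Frame number formula $F_{\mathrm{AS}}(\sY)=v^2(s+t+1-\alpha)^2$.
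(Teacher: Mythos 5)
Your proposal is correct and follows essentially the same route as the paper: both derive the formulas from the two relations $f+g=v-1$ and $k+fr+gs'=0$, eliminating one unknown and dividing by the nonzero difference $r-s'$. Your extra remarks (explicitly invoking $r\neq s'$ and noting consistency with the third trace relation) are harmless additions to the same argument.
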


\begin{proof}
From $1+f+g=v$ we have $f+g=v-1$.
The trace of $A$ is zero, hence
\[
k+fr+gs'=0.
\]
Eliminating $g$ gives
\[
k+fr+(v-1-f)s'=0
\quad\Longrightarrow\quad
f(r-s')=-k-(v-1)s',
\]
which yields the expression for $f$ and then $g=v-1-f$ yields the second formula.
\end{proof}

For the point graph of a partial geometry we can now evaluate the Frame number.

\begin{proposition}\label{prop:frame-pg}
Let $pg(s,t,\alpha)$ be a partial geometry, and let
$\sY$ be the rank~$3$ association scheme of its point graph.
Then the Frame number of $\sY$ is
\begin{equation}\label{eq:frame-pg}
F_{\mathrm{AS}}(\sY)\;=\;v^2\,(s+t+1-\alpha)^2,
\end{equation}
where $v$ is given by \eqref{eq:pg-basic}.
\end{proposition}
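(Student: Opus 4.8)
The plan is to reduce the statement to the rank~$3$ Frame formula \eqref{eq:frame-rank3}, namely $F_{\mathrm{AS}}(\sY)=v^{3}k(v-1-k)/(fg)$, and then to simplify the combinatorial quotient $k(v-1-k)/(fg)$. In fact I would prove the slightly stronger (and essentially classical) fact that the rank~$3$ scheme of \emph{every} strongly regular graph with distinct eigenvalues $k,r,s'$ satisfies $F_{\mathrm{AS}}(\sY)=v^{2}(r-s')^{2}$, and then invoke Lemma~\ref{lem:eigs-pg}: for the point graph of $pg(s,t,\alpha)$ we have $r=s-\alpha$ and $s'=-(t+1)$, so $r-s'=s+t+1-\alpha$, and substituting this gives \eqref{eq:frame-pg} with $v$ as in \eqref{eq:pg-basic}.

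The core is a two-step computation in the invariants $(v,k,r,s')$ (equivalently $(v,k,\lambda,\mu)$). First, Lemma~\ref{lem:mult-srg} gives $f=\bigl(-k-(v-1)s'\bigr)/(r-s')$ and $g=\bigl(k+(v-1)r\bigr)/(r-s')$, so that
\[
fg\,(r-s')^{2}=-\bigl(k+(v-1)r\bigr)\bigl(k+(v-1)s'\bigr).
\]
Second, I would prove the identity $-\bigl(k+(v-1)r\bigr)\bigl(k+(v-1)s'\bigr)=v\,k\,(v-1-k)$, valid for any strongly regular graph. Expanding the left side as $-\bigl[k^{2}+k(v-1)(r+s')+(v-1)^{2}rs'\bigr]$ and using the standard relations $r+s'=\lambda-\mu$, $rs'=\mu-k$ together with the classical counting identity $(k-r)(k-s')=v\mu$ (equivalently $k(k-\lambda-1)=(v-k-1)\mu$), one substitutes $k(r+s')=k^{2}+rs'-v\mu$ and the expression telescopes to $v\,k\,(v-1-k)$. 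Combining the two steps yields $fg=v\,k\,(v-1-k)/(r-s')^{2}$, hence $k(v-1-k)/(fg)=(r-s')^{2}/v$, and therefore $F_{\mathrm{AS}}(\sY)=v^{3}\cdot(r-s')^{2}/v=v^{2}(r-s')^{2}$ by \eqref{eq:frame-rank3}. The final substitution $r-s'=(s-\alpha)-\bigl(-(t+1)\bigr)=s+t+1-\alpha$ then completes the proof.

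I expect the only real obstacle to be the bookkeeping in the identity $-\bigl(k+(v-1)r\bigr)\bigl(k+(v-1)s'\bigr)=v\,k\,(v-1-k)$: the pitfall is to plug in the explicit partial-geometry data $k=s(t+1)$ and $v=(s+1)(st+\alpha)/\alpha$ and expand by brute force, where the division by $\alpha$ makes things messy, whereas staying at the level of $(v,k,\lambda,\mu)$ and using only the single structural SRG relation keeps everything short. As a cosmetic alternative to the first step, one may start from the trace relations $f+g=v-1$ and $fr+gs'=-k$ recorded in the proof of Lemma~\ref{lem:eigs-pg}, note that then $fs'+gr=(v-1)(r+s')+k$, form the product $(fr+gs')(fs'+gr)$, and use $r^{2}+s'^{2}=(r+s')^{2}-2rs'$ and $f^{2}+g^{2}=(v-1)^{2}-2fg$ to obtain the same expression for $fg\,(r-s')^{2}$ in one line; this does not avoid the SRG identity in the second step.
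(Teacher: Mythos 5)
Your proposal is correct, and it takes a genuinely different route from the paper. The paper proves Proposition~\ref{prop:frame-pg} by brute-force substitution of the partial-geometry data: it computes $v-1-k=st(s+1-\alpha)/\alpha$, derives the explicit closed formulas \eqref{eq:f-pg} and \eqref{eq:g-pg} for $f$ and $g$ in terms of $(s,t,\alpha)$, multiplies them to get \eqref{eq:fg-pg}, and then cancels against $k(v-1-k)$ inside \eqref{eq:frame-rank3}. You instead prove the parameter-free statement that \emph{any} strongly regular graph's rank~$3$ scheme satisfies $F_{\mathrm{AS}}(\sY)=v^{2}(r-s')^{2}$, via $fg\,(r-s')^{2}=-\bigl(k+(v-1)r\bigr)\bigl(k+(v-1)s'\bigr)$ from Lemma~\ref{lem:mult-srg} and the single structural identity $(k-r)(k-s')=v\mu$ (equivalently $k(k-\lambda-1)=(v-k-1)\mu$), and only at the end specialize $r-s'=s+t+1-\alpha$ via Lemma~\ref{lem:eigs-pg}; I checked the telescoping computation and it is sound, including the sign conventions for $g$. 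Your route is shorter, avoids the $\alpha$-denominators entirely, and makes transparent that the Frame number of a rank~$3$ scheme depends only on $v$ and the eigenvalue gap $r-s'$, which also re-derives Corollary~\ref{cor:frame-divisors} conceptually; what it does not provide are the explicit multiplicity formulas \eqref{eq:f-pg} and \eqref{eq:g-pg}, which the paper records along the way and reuses (e.g.\ in Remark~\ref{rem:fg-verification} and in the $p$-rank discussion of Section~\ref{sec:prank}), so if you adopted your proof you would want to state those formulas separately.
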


\begin{proof}
We use \eqref{eq:frame-rank3}.
By \eqref{eq:pg-basic} and \eqref{eq:srg-pg},
\[
v=\frac{(s+1)(st+\alpha)}{\alpha},
\qquad
k=s(t+1).
\]
A direct computation from \eqref{eq:pg-basic} and \eqref{eq:srg-pg} gives
\begin{equation}\label{eq:v-1-k}
v-1-k
= \frac{(s+1)(st+\alpha)}{\alpha}-1-s(t+1)
= \frac{st(s+1-\alpha)}{\alpha}.
\end{equation}
Hence
\begin{equation}\label{eq:k(v-1-k)}
k(v-1-k)
= s(t+1)\cdot\frac{st(s+1-\alpha)}{\alpha}
= \frac{s^2 t(t+1)(s+1-\alpha)}{\alpha}.
\end{equation}

By Lemma~\ref{lem:eigs-pg} and Lemma~\ref{lem:mult-srg} we have
\[
r=s-\alpha,\qquad s'=-\,(t+1),\qquad
r-s'=s+t+1-\alpha,
\]
and
\[
f=\frac{-k-(v-1)s'}{r-s'},
\qquad
g=\frac{-k-(v-1)r}{s'-r}.
\]

Substituting $v$ and $k$ from \eqref{eq:pg-basic} and \eqref{eq:srg-pg}, and
$r$ and $s'$ from Lemma~\ref{lem:eigs-pg}, into the formulas of
Lemma~\ref{lem:mult-srg}, and simplifying, yields
\begin{align}
f &=
\frac{st(st+s+t+1)}{\alpha(s+t+1-\alpha)},\label{eq:f-pg}\\[1ex]
g &=
\frac{s(st+\alpha)(s+1-\alpha)}{\alpha(s+t+1-\alpha)}.\label{eq:g-pg}
\end{align}

Multiplying \eqref{eq:f-pg} and \eqref{eq:g-pg} gives
\begin{equation}\label{eq:fg-pg}
fg
= \frac{s^2 t(st+\alpha)(s+1)(t+1)(s+1-\alpha)}
       {\alpha^2(s+t+1-\alpha)^2}.
\end{equation}
Using \eqref{eq:pg-basic} we have
\[
v=\frac{(s+1)(st+\alpha)}{\alpha},
\]
so from \eqref{eq:fg-pg} we obtain
\[
fg
= \frac{s^2 t(t+1)(s+1-\alpha)}{\alpha}\cdot
  \frac{v}{(s+t+1-\alpha)^2}.
\]
Combining this with \eqref{eq:k(v-1-k)} gives
\[
\frac{k(v-1-k)}{fg}
= \frac{s^2 t(t+1)(s+1-\alpha)/\alpha}{\bigl(s^2 t(t+1)(s+1-\alpha)/\alpha\bigr)\,v/(s+t+1-\alpha)^2}
= \frac{(s+t+1-\alpha)^2}{v}.
\]
Substituting this into \eqref{eq:frame-rank3} yields
\[
F_{\mathrm{AS}}(\sY)
= v^3\,\frac{k(v-1-k)}{fg}
= v^3\cdot\frac{(s+t+1-\alpha)^2}{v}
= v^2\,(s+t+1-\alpha)^2,
\]
as claimed.
\end{proof}

\begin{remark}\label{rem:fg-verification}
Formulas \eqref{eq:f-pg} and \eqref{eq:g-pg} for the multiplicities $f$ and $g$ can be verified directly by substituting them into the trace relations
\[
1+f+g=v,\qquad k+fr+gs'=0,
\]
and using the parameter identities \eqref{eq:pg-basic} and \eqref{eq:srg-pg}. In particular, the first relation reduces to the expression for $v$ in \eqref{eq:pg-basic}, while the second reduces to the quadratic equation for the nontrivial eigenvalues of the point graph (see the proof of Lemma~\ref{lem:eigs-pg}). Hence \eqref{eq:f-pg} and \eqref{eq:g-pg} are uniquely determined by the standard parameter identities for strongly regular graphs.
\end{remark}

As an immediate consequence we obtain a simple description of the
prime divisors of $F_{\mathrm{AS}}(\sY)$.

\begin{corollary}\label{cor:frame-divisors}
Let $pg(s,t,\alpha)$ and $\sY$ be as above.
A prime $p$ divides $F_{\mathrm{AS}}(\sY)$ if and only if $p$ divides $v$
or $p$ divides $s+t+1-\alpha$.
\end{corollary}

\begin{proof}
This is immediate from \eqref{eq:frame-pg}.
\end{proof}

\subsection{Semisimplicity via the Frame number}

For homogeneous coherent configurations (and in particular for association
schemes) the Frame number controls modular semisimplicity.
The case of association schemes goes back to Frame
and was developed further by Arad, Fisman and Muzychuk
\cite{AradFismanMuzychuk}, Jacob \cite{JacobThesis}
and Hanaki \cite{HanakiFrame,HanakiModularRep};
Sharafdini extended the result to general coherent configurations
\cite{Sharafdini}, see also Hirasaka–Sharafdini~\cite{HirasakaSharafdini}.

Specialised to our commutative situation we will use the following
formulation.

\begin{theorem}[Hanaki–Jacob, Arad–Fisman–Muzychuk, Sharafdini]\label{thm:frame-semismpl}
Let $\sY$ be a commutative association scheme of order $v$ and Frame number
$F_{\mathrm{AS}}(\sY)$, and let $K$ be a field of characteristic $p\ge 0$.
Then the adjacency algebra $K\sY$ is semisimple if and only if $p$ does not divide
$F_{\mathrm{AS}}(\sY)$.
\end{theorem}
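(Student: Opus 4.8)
The statement is a known result, so the plan is to reduce it to the standard theory rather than to reprove everything from scratch. First I would record the general principle for adjacency algebras of association schemes: $K\sY$ is a finite-dimensional $K$-algebra with a distinguished $K$-basis $\{A_0,\dots,A_d\}$ closed under multiplication with structure constants $p_{ij}^k\in\ZZ$, so it is the base change $K\otimes_{\ZZ}\ZZ\sY$ of a $\ZZ$-order, and a standard criterion (see Hanaki~\cite{HanakiFrame,HanakiModularRep}) says that such an algebra is semisimple over a field of characteristic $p$ if and only if $p$ does not divide the discriminant of the trace form of $\ZZ\sY$ with respect to this basis. For a commutative association scheme the trace form can be computed explicitly from the eigenmatrices, and the key identity — going back to Frame and recorded in \cite{AradFismanMuzychuk,JacobThesis} — is that, up to sign and an explicit unit, this discriminant equals the Frame number $F_{\mathrm{AS}}(\sY)=v^{d+1}\prod_i k_i/\prod_i m_i$ of Definition~\ref{def:frame-number}. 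Thus semisimplicity of $K\sY$ is equivalent to $p\nmid F_{\mathrm{AS}}(\sY)$.

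In more detail, the steps I would carry out are: (1) fix the standard first eigenmatrix $P=(p_j(i))$ of $\sY$, so that the primitive idempotents $E_0,\dots,E_d$ of $\CC\sY$ satisfy $A_j=\sum_i p_j(i)E_i$ and the change-of-basis matrices between $\{A_j\}$ and $\{E_i\}$ are $P$ and its inverse; (2) use the orthogonality relations for association schemes, which express $P^{-1}$ in terms of the second eigenmatrix and the multiplicities $m_i$, to compute $\det P$ as $\pm v^{(d+1)/2}\prod_i m_i^{-1/2}\cdot(\text{something})$ — more precisely one gets the classical relation $(\det P)^2=v^{d+1}\prod_i k_i/\prod_i m_i$; (3) identify the discriminant of the regular trace form of $\ZZ\sY$ in the basis $\{A_j\}$ with $(\det P)^2$ up to a rational unit whose numerator and denominator involve only the $k_i$ (which are themselves factors appearing in $F_{\mathrm{AS}}$), so that the set of primes dividing the discriminant coincides with the set of primes dividing $F_{\mathrm{AS}}(\sY)$; (4) invoke the general semisimplicity criterion: $K\sY\cong K\otimes_{\ZZ}\ZZ\sY$ is semisimple iff $p$ is not a prime divisor of this discriminant. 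Combining (3) and (4) gives the theorem. For characteristic $0$ there is nothing to prove, since $\CC\sY$ is semisimple and $0$ divides no integer; this I would dispatch in one line.

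Since every ingredient is already in the literature in the exact form needed, I would not reprove the eigenmatrix orthogonality relations or the order-theoretic semisimplicity criterion, but simply cite \cite{AradFismanMuzychuk,JacobThesis,HanakiFrame,HanakiModularRep} for the association-scheme case and \cite{Sharafdini,HirasakaSharafdini} for the extension to coherent configurations, noting that a commutative association scheme is in particular a homogeneous coherent configuration so both formulations apply. The main (and essentially only) point requiring care is step~(3): one must make sure that the passage from ``$\det$ of the eigenmatrix'' to ``$\det$ of the integral trace form'' does not introduce or remove prime divisors, i.e.\ that the auxiliary units $\prod_i k_i$ appearing in this comparison are already accounted for among the factors of $F_{\mathrm{AS}}(\sY)$. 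This is exactly the content of Frame's original computation and of the normalization chosen in Definition~\ref{def:frame-number}, so in our write-up it amounts to pointing to that definition and to the cited references rather than to a new argument.
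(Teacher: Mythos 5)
Your proposal is correct and, in the end, takes essentially the same route as the paper: the paper's own ``proof'' is just a citation of Arad--Fisman--Muzychuk, Jacob, Hanaki and Sharafdini, exactly the references you invoke, and your additional sketch of the eigenmatrix/trace-form discriminant argument is a faithful outline of what those sources actually prove. Nothing is missing; you simply spell out more of the background than the paper chooses to.
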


\begin{proof}
For association schemes this is proved in
\cite[Theorem~1]{AradFismanMuzychuk},
see also \cite[Thm.~3.2]{JacobThesis} and
\cite[Thm.~4.1]{HanakiFrame}.
Sharafdini extended the statement to homogeneous coherent configurations
\cite[Thm.~1.2]{Sharafdini}, which specialises to the present
commutative case.
\end{proof}

Combining Theorem~\ref{thm:frame-semismpl} with
Corollary~\ref{cor:frame-divisors} gives the following.

\begin{corollary}\label{cor:semisimple-pg}
Let $pg(s,t,\alpha)$ be a partial geometry and $\sY$ the associated
rank~$3$ scheme.
Let $K$ be a field of characteristic $p>0$.
Then the adjacency algebra $K\sY$ is semisimple if and only if
\[
p\nmid v
\quad\text{and}\quad
p\nmid (s+t+1-\alpha).
\]
\end{corollary}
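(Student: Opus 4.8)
The plan is to combine the two ingredients that have just been assembled: the general semisimplicity criterion of Theorem~\ref{thm:frame-semismpl} and the explicit factorization of the Frame number in Corollary~\ref{cor:frame-divisors}. Since $K$ has characteristic $p>0$, Theorem~\ref{thm:frame-semismpl} applies and tells us that $K\sY$ is semisimple if and only if $p\nmid F_{\mathrm{AS}}(\sY)$. So the statement will follow once we translate the divisibility condition $p\mid F_{\mathrm{AS}}(\sY)$ into a condition on the classical parameters.

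First I would invoke Proposition~\ref{prop:frame-pg}, which gives $F_{\mathrm{AS}}(\sY)=v^2(s+t+1-\alpha)^2$. From this, Corollary~\ref{cor:frame-divisors} records that a prime $p$ divides $F_{\mathrm{AS}}(\sY)$ precisely when $p\mid v$ or $p\mid(s+t+1-\alpha)$: indeed $p\mid v^2(s+t+1-\alpha)^2$ iff $p\mid v^2$ or $p\mid(s+t+1-\alpha)^2$, and since $p$ is prime this is equivalent to $p\mid v$ or $p\mid(s+t+1-\alpha)$. Negating, $p\nmid F_{\mathrm{AS}}(\sY)$ is equivalent to the conjunction $p\nmid v$ and $p\nmid(s+t+1-\alpha)$.

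Putting the two equivalences together yields: $K\sY$ is semisimple $\iff$ $p\nmid F_{\mathrm{AS}}(\sY)$ $\iff$ $p\nmid v$ and $p\nmid(s+t+1-\alpha)$, which is exactly the assertion. The argument is therefore a short chain of equivalences, and there is no real obstacle: the only point requiring care is that the hypotheses of Theorem~\ref{thm:frame-semismpl} are met, namely that $\sY$ is a commutative association scheme — which holds because the point graph of a partial geometry is strongly regular, so its rank~$3$ scheme is symmetric and hence commutative — and that Corollary~\ref{cor:frame-divisors} is being applied to the correct factorization of $F_{\mathrm{AS}}(\sY)$. Since both $v$ and $s+t+1-\alpha$ are positive integers (the latter because $r-s'=s+t+1-\alpha>0$ as the two nontrivial eigenvalues of a connected strongly regular graph are distinct with $r>s'$), the divisibility statements make sense, and the proof is complete.
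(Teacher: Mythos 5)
Your proposal is correct and follows exactly the paper's route: the corollary is obtained by combining Theorem~\ref{thm:frame-semismpl} with Corollary~\ref{cor:frame-divisors} (which itself rests on Proposition~\ref{prop:frame-pg}), just as you do. The extra remarks on commutativity of $\sY$ and positivity of $s+t+1-\alpha$ are harmless additional care but not needed beyond the paper's argument.
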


\subsection{Jacobson radicals over finite fields}

We now refine Corollary~\ref{cor:semisimple-pg} and determine the Jacobson
radical of the adjacency algebra over a finite field.
Let $p$ be a prime and put $K=\FF_p$.
We write $\FF_p\sY$ for the $\FF_p$-span of $\{A_0,A_1,A_2\}$.

Set
\[
r:=s-\alpha,\qquad s':=-\,(t+1),\qquad
B:=(A-kI)(A-rI)\in\FF_p\sY.
\]
We also write $J$ for the all-ones matrix.
Over $\CC$ the algebra $\CC\sY$ is semisimple and decomposes as a direct
product of three simple components corresponding to the eigenvalues
$k,r,s'$ of $A$.
When we reduce modulo $p$, some of these components may merge and produce
a non-zero Jacobson radical.

Motivated by Proposition~\ref{prop:frame-pg}
and Corollary~\ref{cor:semisimple-pg}
we distinguish four cases for a given prime $p$:
\begin{align*}
\text{(SS)}\;& p\nmid v,\quad p\nmid (s+t+1-\alpha);\\
\text{(V)}\;& p\mid v,\quad p\nmid (s+t+1-\alpha);\\
\text{(R)}\;& p\nmid v,\quad p\mid (s+t+1-\alpha);\\
\text{(VR)}\;& p\mid v,\quad p\mid (s+t+1-\alpha).
\end{align*}

\begin{theorem}\label{thm:radical-pg}
Let $pg(s,t,\alpha)$ be a partial geometry and
$\sY$ the associated rank~$3$ association scheme.
Let $p$ be a prime and put $K=\FF_p$.
Then the Jacobson radical $\Rad(K\sY)$ is described as follows.
\begin{enumerate}[(i)]
\item \emph{Case \textup{(SS)}.}
If $p\nmid v$ and $p\nmid (s+t+1-\alpha)$, then
\[
\Rad(K\sY)=0.
\]
\item \emph{Case \textup{(V)}.}
If $p\mid v$ and $p\nmid (s+t+1-\alpha)$, then
\[
\Rad(K\sY)=K\cdot J
\qquad\text{and}\qquad
\dim_K\Rad(K\sY)=1.
\]
\item \emph{Case \textup{(R)}.}
If $p\nmid v$ and $p\mid (s+t+1-\alpha)$, then
\[
\Rad(K\sY)=K\cdot B
\qquad\text{and}\qquad
\dim_K\Rad(K\sY)=1.
\]
\item \emph{Case \textup{(VR)}.}
If $p\mid v$ and $p\mid (s+t+1-\alpha)$, then
\[
\Rad(K\sY)=K\cdot J\;\oplus\;K\cdot B
\qquad\text{and}\qquad
\dim_K\Rad(K\sY)=2.
\]
\end{enumerate}
\end{theorem}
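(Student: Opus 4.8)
The plan is to work inside the three-dimensional commutative algebra $K\sY=\Span_K\{A_0,A_1,A_2\}$, using the single generator $A=A_1$ and the relation $A_0+A_1+A_2=J$. First I would record the minimal polynomial of $A$ over $\QQ$, namely $m(x)=(x-k)(x-r)(x-s')$ (Lemma~\ref{lem:eigs-pg}), so that over $\ZZ$ the algebra $\ZZ[A]/(m(x))$ has the same rank as $\ZZ\sY$; since $A_0,A_1,J$ are visibly in $\ZZ[A]$ and span $\ZZ\sY$, we get $K\sY=K[A]\cong K[x]/(\bar m(x))$ where $\bar m$ is the reduction of $m$ modulo $p$. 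The radical of a quotient $K[x]/(\bar m)$ is generated by the radical of the ideal, i.e. by $\operatorname{rad}(\bar m)=\prod(x-\text{roots})$ with multiplicities reduced to one, so $\Rad(K\sY)$ is generated by $\bar m(x)/\operatorname{rad}(\bar m(x))$ evaluated at $A$. Thus everything reduces to understanding how the three factors $x-k,\ x-r,\ x-s'$ collide modulo $p$. The key arithmetic facts are $r-s'=s+t+1-\alpha$ and $k-r=k-s+\alpha=s(t+1)-s+\alpha=\mu=\alpha(t+1)$ together with $k-s'=k+t+1=s(t+1)+t+1=(s+1)(t+1)$; I would also note $v\cdot\text{(stuff)}$ relations so that $p\mid v$ is equivalent to a collision involving the trivial eigenvalue. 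Concretely one checks: $p\mid(r-s')\iff r\equiv s'$; and, using the Frame-number computation $F_{\mathrm{AS}}(\sY)=v^2(s+t+1-\alpha)^2$ together with $k(v-1-k)=\tfrac{s^2t(t+1)(s+1-\alpha)}{\alpha}$ and the fact that $fg$ carries the factor $v$, that $p\mid v$ is exactly the condition under which $k$ collides with one of $r,s'$ — this is the place where the hypothesis $p\mid v$ gets \emph{used}, rather than merely $p\mid F_{\mathrm{AS}}(\sY)$.

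Granting that dictionary, the four cases fall out as follows. In case (SS) no two of $k,r,s'$ agree modulo $p$, so $\bar m$ is squarefree, $K[x]/(\bar m)$ is a product of fields, and $\Rad(K\sY)=0$. In case (V), $p\mid v$ forces exactly one collision among $\{k,r,s'\}$ — and since $p\nmid(r-s')$ that collision is $k\equiv r$ (or $k\equiv s'$); in either situation $\bar m$ has a single repeated root and $\Rad(K\sY)$ is one-dimensional, spanned by $(A-kI)(A-rI)(A-s'I)$ with the unrepeated factor removed. Here I would identify that generator with $K\cdot J$: up to a nonzero scalar, $J=A_0+A_1+A_2$ is the primitive idempotent for the eigenvalue $k$ scaled by $v$, and since $p\mid v$ the matrix $J$ lies in the radical (it satisfies $J^2=vJ\equiv 0$ and $A_1J=kJ$, $(A_1-kI)J=0$), hence $KJ\subseteq\Rad$, and dimension count gives equality. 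In case (R), $p\nmid v$ but $p\mid(r-s')$, so the only collision is $r\equiv s'$ and $k$ stays separate; now $\bar m(x)=(x-k)(x-\bar r)^2$ with $\bar r\not\equiv k$, the radical is one-dimensional and generated by $(A-kI)(A-rI)=B$; that $B\in\Rad$ follows because $B\cdot(A-s'I)=m(A)=0$ while $B\neq0$ over $K$ (its lift is nonzero and $p\nmid v$ keeps enough of it alive — concretely $B$ is, up to a unit, the sum of the two merging primitive idempotents, which becomes nilpotent). In case (VR) both collisions occur: either all three of $k,r,s'$ become congruent mod $p$, or $k\equiv$ one of them while also $r\equiv s'$ forcing all three equal again — in any event $\bar m(x)=(x-c)^3$ for a common residue $c$, so $\dim_K\Rad(K\sY)=2$ with $K$-basis $\{(A-cI),(A-cI)^2\}$, which I would re-express as $K\cdot J\oplus K\cdot B$ by showing these two elements are linearly independent modulo the (now-trivial) semisimple part and both lie in the radical by the arguments of cases (V) and (R).

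The step I expect to be the main obstacle is the precise bookkeeping in case (VR), specifically verifying that $J$ and $B$ are $K$-linearly \emph{independent} and that together they span all of $\Rad(K\sY)$ (not just a subspace), and that the listed sum is direct. Over $\CC$ one has $J=\tfrac{v}{?}E_k$ and $B=c'(E_r+E_s')$ for primitive idempotents $E_k,E_r,E_{s'}$ and explicit nonzero constants; reducing modulo $p$ one must check that the constants do not vanish and that the images of $J$ and $B$ are not proportional. I would handle this by computing the transition matrix between $\{A_0,A_1,A_2\}$ and $\{I, J, B\}$ explicitly — it is upper-triangular with diagonal entries $1,\,1,\,$ and the leading coefficient of $B$ in $A_2$, namely $-1$ — so $\{I,J,B\}$ is an $\FF_p$-basis of $K\sY$ for \emph{every} $p$, and then $\Rad(K\sY)=K\sY\cap\Span\{J,B\}$ in case (VR) simply because $K\sY/\Rad$ is one-dimensional (all three eigenvalues collapse). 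A secondary subtlety, which I would address up front, is confirming the claimed equivalence ``$p\mid v\iff k$ collides with $r$ or $s'$ modulo $p$'': this is not formal and requires the explicit parameter formulas, so I would isolate it as a short preliminary lemma before entering the four-case analysis, deducing it from $v=\tfrac{(s+1)(st+\alpha)}{\alpha}$, $k-r=\alpha(t+1)$, $k-s'=(s+1)(t+1)$ and the identity $\alpha v=(s+1)(st+\alpha)$.
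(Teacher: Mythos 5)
Your argument rests on two claims that are false in general, and they break precisely in the cases the theorem must cover. First, the identification $K\sY=K[A]\cong K[x]/(\bar m(x))$: you assert that $J$ is ``visibly in $\ZZ[A]$'', but the actual identity is $(A-rI)(A-s'I)=\mu J$ with $\mu=\alpha(t+1)$, so only $\mu J\in\ZZ[A]$. When $p\mid\mu$ the reduction $\FF_p[A]$ is a \emph{proper} subalgebra of $\FF_p\sY$, and the minimal polynomial of $A$ over $\FF_p$ need not be the reduction of $(x-k)(x-r)(x-s')$. Second, your ``preliminary lemma'' that $p\mid v$ is equivalent to a collision of $k$ with $r$ or $s'$ modulo $p$ only holds in one direction: from $(k-r)(k-s')=\alpha(t+1)v$ one gets that $p\mid v$ forces a collision (when relevant), but a collision does not force $p\mid v$. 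A concrete counterexample kills both claims at once: the grid $pg(2,1,1)=\mathrm{GQ}(2,1)$ with $p=2$ has $v=9$, $k=4$, $r=1$, $s'=-2$, $s+t+1-\alpha=3$, $\mu=2$. Here $k\equiv s'\equiv 0\pmod 2$ although $2\nmid v$ and $2\nmid(s+t+1-\alpha)$, so this is case (SS); the Frame number is $729$, and $\FF_2\sY$ is in fact semisimple. Your scheme, however, would take $\bar m(x)=x^2(x+1)$, conclude that $K[x]/(\bar m)$ has a one-dimensional radical, and predict $\Rad(\FF_2\sY)\neq 0$ --- wrong, exactly because here $J\notin\FF_2[A]$ and $\FF_2\sY\not\cong\FF_2[x]/(\bar m)$. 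So your case (SS) argument fails outright, and the case analysis built on eigenvalue collisions cannot be repaired without controlling $\mu$ and the index of $\ZZ[A]$ in $\ZZ\sY$ modulo $p$.

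There are further slips downstream of the same setup: in case (V) the radical of $K[x]/((x-a)^2(x-b))$ is spanned by $(x-a)(x-b)$ (one copy of each distinct factor), not by $\bar m$ ``with the unrepeated factor removed'', which gives $(x-a)^2$ and is not even nilpotent there; and your claim that $\{I,J,B\}$ is an $\FF_p$-basis of $K\sY$ for every $p$ is false, since the coefficient of $A$ in $B$ is $s'-k=-(s+1)(t+1)$, which can vanish modulo $p$ (for $\mathrm{GQ}(2,2)$ and $p=3$ one even has $B\equiv 0$). Note that the paper's proof avoids all of this bookkeeping: case (SS) and the non-semisimplicity needed in case (R) are obtained from the Frame-number criterion (Theorem~\ref{thm:frame-semismpl} with Corollary~\ref{cor:frame-divisors}), while the inclusions $K\cdot J\subseteq\Rad(K\sY)$ and $K\cdot B\subseteq\Rad(K\sY)$ are verified directly ($J^2=vJ\equiv 0$; $B$ nilpotent because the minimal polynomial is $(x-k)(x-\rho)^2$), with semisimplicity of the quotients giving the reverse inclusions. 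If you want to salvage your route, you must treat $K\sY$ as the three-dimensional algebra $\Span_K\{I,A,J\}$ rather than $K[A]$, and replace the false equivalence involving $p\mid v$ by the Frame-number criterion or by a direct analysis of the relations $J^2=vJ$, $AJ=kJ$, $(A-rI)(A-s'I)=\mu J$.
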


\begin{proof}
The algebra $K\sY$ is commutative of dimension at most~$3$,
and it contains the identity $I$.
Hence $\dim_K\Rad(K\sY)\le 2$, and $\Rad(K\sY)$ is a nilpotent ideal.

\smallskip
\noindent
(SS) By Corollary~\ref{cor:semisimple-pg} we have $p\nmid F_{\mathrm{AS}}(\sY)$, hence
$K\sY$ is semisimple by Theorem~\ref{thm:frame-semismpl},
and $\Rad(K\sY)=0$.

\smallskip
\noindent
(V) Assume $p\mid v$ and $p\nmid (s+t+1-\alpha)$.
Then $p\mid F_{\mathrm{AS}}(\sY)$ by Corollary~\ref{cor:frame-divisors}, so $K\sY$ is not
semisimple.
Since $J^2=vJ\equiv 0\pmod p$, the element $J$ is central and nilpotent,
hence $K\cdot J\subseteq\Rad(K\sY)$.
On the other hand $r\not\equiv s'\pmod p$, so the eigenspaces of $A$
for $r$ and $s'$ remain distinct over $K$, and the quotient
$K\sY/K\cdot J$ is generated by the image of $A$ and has three distinct
eigenvalues $k,r,s'$ in $K$.
Therefore $K\sY/K\cdot J$ is semisimple (it is isomorphic to a direct product
of two fields corresponding to $r$ and $s'$; compare
\cite{HanakiModularRep}), and so
$\Rad(K\sY)\subseteq K\cdot J$.
Thus $\Rad(K\sY)=K\cdot J$ and $\dim_K\Rad(K\sY)=1$.

\smallskip
\noindent
(R) Assume $p\nmid v$ and $p\mid (s+t+1-\alpha)$.
Then $r-s'=s+t+1-\alpha\equiv 0\pmod p$, hence $r\equiv s'\pmod p$.
Since $p\nmid v$, the idempotent $E_0=(1/v)J$ corresponding to the
trivial representation still exists over $K$.
The eigenvalues of $A$ in $K$ are therefore $k$ and a single value
$\rho:=\overline{r}=\overline{s'}$.
The minimal polynomial of $A$ over $K$ has the form
\[
m_p(x)=(x-k)(x-\rho)^e,\qquad e\in\{1,2\}.
\]

Since $p\mid F_{\mathrm{AS}}(\sY)$ by Corollary~\ref{cor:frame-divisors}, Theorem~\ref{thm:frame-semismpl} shows that
$K\sY$ is not semisimple. Hence the minimal polynomial of $A$ over $K$
cannot split into distinct linear factors, so necessarily $e=2$.

\[
m_p(x)=(x-k)(x-\rho)^2.
\]
In particular,
\[
B=(A-kI)(A-rI)=(A-kI)(A-\rho I)
\]
is nilpotent: on the $k$-eigenspace it vanishes, and on the
generalised $\rho$-eigenspace it is a polynomial in the nilpotent
part of $A$.
Hence $K\cdot B\subseteq\Rad(K\sY)$.

Since $p\nmid v$, the idempotent $E_0$ survives modulo $p$ and
the quotient $K\sY/K\cdot B$ has two simple components corresponding to
the eigenvalues $k$ and $\rho$.
Thus $K\sY/K\cdot B$ is semisimple, and hence
$\Rad(K\sY)\subseteq K\cdot B$.
We conclude that $\Rad(K\sY)=K\cdot B$ and $\dim_K\Rad(K\sY)=1$.

\smallskip
\noindent
(VR) Finally assume $p\mid v$ and $p\mid (s+t+1-\alpha)$.
Then both phenomena from (V) and (R) occur simultaneously:
$J$ is nilpotent and $r\equiv s'\pmod p$, so both $J$ and $B$ lie in
$\Rad(K\sY)$.
Moreover they are linearly independent over $K$ (their images in
$\CC\sY$ are independent), so
$K\cdot J\oplus K\cdot B$ is a $2$-dimensional nilpotent ideal.
The quotient $K\sY/(K\cdot J\oplus K\cdot B)$ has at most one simple
component (coming from the eigenvalue $k$), hence it is semisimple.
Therefore
\[
\Rad(K\sY)=K\cdot J\oplus K\cdot B
\quad\text{and}\quad
\dim_K\Rad(K\sY)=2,
\]
which completes the proof.
\end{proof}

For later reference we summarise Theorem~\ref{thm:radical-pg} in a table.

\begin{table}[htb]
\centering
\begin{tabular}{c|c|c}
case & conditions on $p$ & $\Rad(\FF_p\sY)$ \\ \hline
(SS) & $p\nmid v$, $p\nmid (s+t+1-\alpha)$ & $0$ \\[0.3ex]
(V)  & $p\mid v$, $p\nmid (s+t+1-\alpha)$ & $\FF_p\cdot J$ \\[0.3ex]
(R)  & $p\nmid v$, $p\mid (s+t+1-\alpha)$ & $\FF_p\cdot B$ \\[0.3ex]
(VR) & $p\mid v$, $p\mid (s+t+1-\alpha)$ & $\FF_p\cdot J\oplus\FF_p\cdot B$
\end{tabular}
\caption{The Jacobson radical of the adjacency algebra of the point graph of
$pg(s,t,\alpha)$ over $\FF_p$.}
\label{tab:rad-pg}
\end{table}

\section{Generic p-ranks of partial geometries}\label{sec:prank}
In this section we study the $p$-rank of the adjacency matrix of the point graph of a
partial geometry $\mathrm{pg}(s,t,\alpha)$. We first recall a general linear-algebraic
fact for rank $3$ association schemes, and then specialize to the case of partial
geometries. Throughout, for a prime $p$ we write $\FF_p$ for the field of order $p$ and
$\overline{\FF}_p$ for its algebraic closure.

\subsection{p-ranks for rank 3 association schemes}

Let $\sY=(X,\{A_0,A_1,A_2\})$ be a commutative association scheme of rank $3$.
Then $A_0=I$, $A_1$ is the adjacency matrix of a connected strongly regular graph on the
vertex set $X$, and $A_2=J-I-A_1$. We write $v:=|X|$ and $A:=A_1$.

Over $\CC$ the matrix $A$ is diagonalizable and has exactly three eigenvalues
\[
k>r>s',
\]
with multiplicities $1,f,g$, respectively, where $1+f+g=v$; see, e.g.,
\cite[Ch.~9]{BrouwerHaemers} or \cite[Ch.~3]{GodsilRoyle}. Thus the characteristic
polynomial of $A$ has the form
\[
\chi_A(x)=(x-k)(x-r)^f(x-s')^g\in\ZZ[x].
\]
The integers $k,r,s'$ (and $f,g$) can be expressed in terms of the intersection numbers
of the scheme, but we do not need their explicit formulas here.

For a prime $p$ we denote by
\[
\chi_{A,p}(x)\in\FF_p[x]
\]
the reduction of $\chi_A(x)$ modulo $p$, and by $V_p:=\FF_p^{X}$ the standard
$\FF_p$-module with basis indexed by $X$. We view $A$ as an $\FF_p$-linear endomorphism
of $V_p$ by reducing its entries modulo $p$.

\begin{lemma}\label{lem:rank3-diagonalizable-mod-p}
Let $p$ be a prime such that
\[
p\nmid (k-r)(k-s')(r-s').
\]
Then over $\overline{\FF}_p$ the operator $A$ is diagonalizable, with eigenvalues
$\overline{k},\overline{r},\overline{s'}\in\overline{\FF}_p$ (the images of
$k,r,s'$ modulo $p$) of geometric multiplicities $1,f,g$, respectively.
\end{lemma}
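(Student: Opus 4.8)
The plan is to use the fact that the characteristic polynomial $\chi_A(x) = (x-k)(x-r)^f(x-s')^g$ factors over $\FF_p$ into three \emph{coprime} factors under the stated hypothesis, and then invoke the primary decomposition together with the symmetry of $A$ to upgrade from block-triangular to fully diagonal.

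\medskip

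\noindent\emph{Step 1: Primary decomposition.} Since $p \nmid (k-r)(k-s')(r-s')$, the three elements $\overline k, \overline r, \overline{s'} \in \FF_p$ are pairwise distinct, so the factors $(x-\overline k)$, $(x-\overline r)^f$, $(x-\overline{s'})^g$ of $\chi_{A,p}(x)$ are pairwise coprime in $\FF_p[x]$. By the primary decomposition theorem (equivalently, by a partition-of-unity / Chinese Remainder argument applied to $\FF_p[x]/(\chi_{A,p})$), the space $V_p \otimes_{\FF_p} \overline{\FF}_p$ decomposes as a direct sum of the generalized eigenspaces
\[
\widetilde V = \widetilde V_{\overline k} \oplus \widetilde V_{\overline r} \oplus \widetilde V_{\overline{s'}},
\qquad
\widetilde V_{\overline\lambda} = \ker\bigl( (A-\overline\lambda I)^{v} \bigr),
\]
each $A$-invariant, and $\dim \widetilde V_{\overline k} = 1$, $\dim \widetilde V_{\overline r} = f$, $\dim \widetilde V_{\overline{s'}} = g$ (the algebraic multiplicities are read off from $\chi_{A,p}$).

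\medskip

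\noindent\emph{Step 2: Semisimplicity via the minimal polynomial.} It remains to show $A$ acts as a scalar on each generalized eigenspace, i.e.\ that the minimal polynomial $m_{A,p}(x)$ is squarefree. Here I would use Theorem~\ref{thm:frame-semismpl}: the hypothesis $p \nmid (k-r)(k-s')(r-s')$ together with the Frame number formula \eqref{eq:frame-rank3} shows $p \nmid F_{\mathrm{AS}}(\sY)$ precisely when $p\nmid v$ as well — however, to avoid assuming $p\nmid v$, the cleaner route is the following direct argument. The matrix $A$ is real symmetric, hence so is its reduction, but symmetry alone does not force diagonalizability in characteristic $p$; instead I use that $A$ is normal with respect to a nondegenerate symmetric form, or more elementarily, that the three idempotents
\[
E_{\overline\lambda} := \prod_{\mu \neq \lambda} \frac{A - \overline\mu I}{\overline\lambda - \overline\mu}
\]
are well-defined over $\FF_p$ (all denominators are units by hypothesis), are orthogonal idempotents summing to $I$, and satisfy $A = \overline k E_{\overline k} + \overline r E_{\overline r} + \overline{s'} E_{\overline{s'}}$. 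Indeed, $\CC\sY$ is spanned by $I, A, J$ and equals $\CC[A]$; the standard spectral idempotents of $A$ in $\CC\sY$ have entries in $\ZZ[\tfrac1v, \tfrac1{(k-r)(k-s')(r-s')}]$, but in fact the Lagrange-interpolation formula above shows they lie in $\ZZ[\tfrac1{(k-r)(k-s')(r-s')}][A]$ with \emph{no} $v$ in the denominator, so they reduce mod $p$. Then $E_{\overline\lambda}$ projects onto an $A$-eigenspace for $\overline\lambda$, and since $\sum E_{\overline\lambda}=I$ these eigenspaces span, giving diagonalizability with geometric multiplicities $1,f,g$.

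\medskip

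\noindent The main obstacle is the last point: verifying that the Lagrange idempotents $E_{\overline\lambda}$ genuinely have $p$-integral coefficients when expressed as polynomials in $A$, i.e.\ that no hidden factor of $v$ appears. This follows because $(x-k)(x-r)(x-s')$ is, up to sign and the factor coming from repeated roots, the minimal polynomial of $A$ over $\QQ$, and the interpolation coefficients involve only the pairwise differences $\overline\lambda - \overline\mu$ in the denominators — which are exactly the quantities assumed coprime to $p$. Once this is checked, the identities $E_{\overline\lambda}^2 = E_{\overline\lambda}$, $E_{\overline\lambda}E_{\overline\mu}=0$ for $\lambda\neq\mu$, and $\sum_\lambda E_{\overline\lambda}=I$ all hold over $\FF_p$ by reduction from their counterparts over $\QQ$ (or over $\ZZ[\tfrac1{(k-r)(k-s')(r-s')}]$), and passing to $\overline{\FF}_p$ gives the diagonalization with the stated multiplicities. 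I would remark that this lemma is purely about $A$ as a linear operator and makes no claim about semisimplicity of the full algebra $\FF_p\sY$ (which can still fail when $p \mid v$, via the nilpotent element $J$).
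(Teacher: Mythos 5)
Your proposal is correct, and it takes a genuinely somewhat different route from the paper's proof. The paper works with the reduced characteristic polynomial $\chi_{A,p}(x)=(x-\overline k)(x-\overline r)^f(x-\overline{s'})^g$ and argues that the minimal polynomial of $A$ over $\FF_p$ has no repeated roots, so $A$ is diagonalizable and the geometric multiplicities equal the algebraic ones $1,f,g$. You instead construct the spectral projections explicitly as Lagrange idempotents $E_{\overline\lambda}=\prod_{\mu\neq\lambda}(A-\overline\mu I)/(\overline\lambda-\overline\mu)$ and reduce the identities $E_{\overline\lambda}^2=E_{\overline\lambda}$, $E_{\overline\lambda}E_{\overline\mu}=0$, $\sum_\lambda E_{\overline\lambda}=I$, $A=\sum_\lambda\overline\lambda E_{\overline\lambda}$ from characteristic zero; this is legitimate because the minimal polynomial of $A$ over $\QQ$ is exactly $(x-k)(x-r)(x-s')$ (your phrase ``up to sign and the factor coming from repeated roots'' is a wobble --- it is precisely this cubic, and that exactness is what gives the annihilating identity $(A-\overline kI)(A-\overline rI)(A-\overline{s'}I)=0$ over $\FF_p$ on which orthogonality and idempotency rest), and because, as you correctly stress, the Lagrange formula puts the entries in $\ZZ[\tfrac1{(k-r)(k-s')(r-s')}]$ with no hidden factor of $v$ in the denominators, so the hypothesis $p\nmid(k-r)(k-s')(r-s')$ alone suffices. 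Your side remarks are also apt: Theorem~\ref{thm:frame-semismpl} is not the right tool here since it would additionally require $p\nmid v$, and the lemma concerns only the single operator $A$, not semisimplicity of $\FF_p\sY$. What your version buys is that it makes the integral annihilating identity explicit and constructive, whereas the paper's literal assertion that $\chi_{A,p}$ itself is square-free is accurate only when $f=g=1$ (the square-free object is the reduced \emph{minimal} polynomial); what the paper's shorter argument buys is that it avoids any idempotent bookkeeping. For the multiplicity count, either your Step~1 (algebraic multiplicities $1,f,g$ plus diagonalizability) or a rank argument on the reduced idempotents (ranks cannot increase under reduction and $\sum_\lambda\rank_{\FF_p}E_{\overline\lambda}=v=1+f+g$ forces equality) closes the argument.
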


\begin{proof}
The factorization
\[
\chi_A(x)=(x-k)(x-r)^f(x-s')^g
\]
holds in $\CC[x]$ and hence in $\QQ[x]$. Since $k,r,s'$ are algebraic integers and
rational, they are in fact integers; thus $\chi_A(x)\in\ZZ[x]$ and its reduction modulo
$p$ is
\[
\chi_{A,p}(x)=(x-\overline{k})(x-\overline{r})^f(x-\overline{s'})^g
\in\FF_p[x].
\]
By assumption the three elements $\overline{k},\overline{r},\overline{s'}$ are pairwise
distinct in $\overline{\FF}_p$. Therefore $\chi_{A,p}(x)$ has three distinct roots in
$\overline{\FF}_p$, and
\[
\gcd\bigl(\chi_{A,p}(x),\chi'_{A,p}(x)\bigr)=1
\]
in $\FF_p[x]$. Hence $\chi_{A,p}(x)$ is square-free, and every invariant factor of $A$
over $\FF_p$ is a product of distinct linear factors.

Let $m_{A,p}(x)$ be the minimal polynomial of $A$ over $\FF_p$. Then
$m_{A,p}(x)\mid\chi_{A,p}(x)$ and $m_{A,p}(x)$ has no repeated roots. By standard linear
algebra, this is equivalent to $A$ being diagonalizable over $\overline{\FF}_p$, with
eigenspace decomposition
\[
V_p\otimes_{\FF_p}\overline{\FF}_p
=
U_k\oplus U_r\oplus U_{s'},
\]
where $U_\lambda$ is the eigenspace for eigenvalue $\overline{\lambda}$. The
multiplicity of $\overline{\lambda}$ as a root of $\chi_{A,p}(x)$ equals
$\dim_{\overline{\FF}_p}U_\lambda$; see, for example, \cite[Sec.~2.4]{GodsilRoyle}. Since
these multiplicities are $1,f,g$, the claim follows.
\end{proof}

We now describe the $p$-rank of $A$ in terms of the reduction of its eigenvalues
modulo $p$.

\begin{proposition}\label{prop:rank3-prank}
With notation as above, assume that $p$ is a prime such that
\[
p\nmid (k-r)(k-s')(r-s').
\]
Let $\rank_p(A)$ denote the rank of $A$ as a matrix over $\FF_p$.
\begin{enumerate}[\rm(i)]
\item If none of $k,r,s'$ is $0$ modulo $p$, then
\[
\rank_p(A)=v.
\]
\item If $k\equiv 0\pmod p$ and $r\not\equiv 0\pmod p$, $s'\not\equiv 0\pmod p$, then
\[
\rank_p(A)=v-1.
\]
\item If $r\equiv 0\pmod p$ and $k\not\equiv 0\pmod p$, $s'\not\equiv 0\pmod p$, then
\[
\rank_p(A)=v-f.
\]
\item If $s'\equiv 0\pmod p$ and $k\not\equiv 0\pmod p$, $r\not\equiv 0\pmod p$, then
\[
\rank_p(A)=v-g.
\]
\end{enumerate}
\end{proposition}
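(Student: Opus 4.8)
The plan is to reduce everything to the eigenspace decomposition furnished by Lemma~\ref{lem:rank3-diagonalizable-mod-p} and then read off the rank from the multiplicity of the eigenvalue $0$. First I would observe that the rank of a matrix is invariant under field extension (row reduction over $\FF_p$ and over $\overline{\FF}_p$ produce the same number of pivots), so $\rank_p(A)=\dim_{\overline{\FF}_p}(A\cdot(V_p\otimes\overline{\FF}_p))=v-\dim_{\overline{\FF}_p}\ker_{\overline{\FF}_p}(A)$, and it suffices to compute the dimension of the kernel of $A$ acting on $V_p\otimes_{\FF_p}\overline{\FF}_p$.

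Next I would apply the hypothesis $p\nmid (k-r)(k-s')(r-s')$ together with Lemma~\ref{lem:rank3-diagonalizable-mod-p}: over $\overline{\FF}_p$ we have the direct-sum decomposition
\[
V_p\otimes_{\FF_p}\overline{\FF}_p = U_k\oplus U_r\oplus U_{s'},
\]
where $U_k,U_r,U_{s'}$ are the eigenspaces of $A$ for the pairwise distinct eigenvalues $\overline{k},\overline{r},\overline{s'}$, of dimensions $1,f,g$ respectively. Since $A$ acts on $U_\lambda$ as the scalar $\overline{\lambda}$, its kernel on $V_p\otimes\overline{\FF}_p$ is precisely the sum of those $U_\lambda$ for which $\overline{\lambda}=0$. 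Because $\overline{k},\overline{r},\overline{s'}$ are pairwise distinct in $\overline{\FF}_p$, at most one of them can equal $0$; so exactly one of the following occurs: none of $k,r,s'$ is $0$ mod $p$, in which case $\ker A=0$ and $\rank_p(A)=v$; or $\overline{k}=0$ (forcing $\overline{r},\overline{s'}\neq 0$ automatically), in which case $\ker A=U_k$ and $\rank_p(A)=v-1$; or $\overline{r}=0$, in which case $\ker A=U_r$ and $\rank_p(A)=v-f$; or $\overline{s'}=0$, in which case $\ker A=U_{s'}$ and $\rank_p(A)=v-g$. This yields the four cases (i)–(iv) verbatim.

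I do not expect any serious obstacle here: the content is entirely in Lemma~\ref{lem:rank3-diagonalizable-mod-p}, and what remains is the standard fact that for a diagonalizable operator the nullity equals the multiplicity of the eigenvalue $0$, plus the field-extension invariance of rank. The only point that deserves an explicit sentence is that the secondary hypotheses in (ii)–(iv) (that the other two eigenvalues are nonzero mod $p$) are in fact automatic from the distinctness assumption, so there is no case overlap and no residual case to treat.
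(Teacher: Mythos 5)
Your proposal is correct and follows essentially the same route as the paper: invoke Lemma~\ref{lem:rank3-diagonalizable-mod-p} to diagonalize $A$ over $\overline{\FF}_p$ with eigenspaces of dimensions $1,f,g$, identify the kernel with the eigenspace of $0$ (at most one eigenvalue can vanish since they are pairwise distinct mod $p$), and read off the rank in each case. Your explicit remarks on the field-extension invariance of rank and on the redundancy of the secondary hypotheses in (ii)--(iv) are points the paper also makes (the latter) or uses implicitly (the former), so there is nothing to add.
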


\begin{proof}
Note first that our hypothesis
\[
p\nmid (k-r)(k-s')(r-s')
\]
implies that the three eigenvalues $k,r,s'$ remain pairwise distinct modulo~$p$.
In particular, at most one of them can vanish in $\mathbb{F}_p$, so in each of
the cases \textup{(ii)}–\textup{(iv)} it is automatic that the other two
eigenvalues are nonzero modulo~$p$.

By Lemma~\ref{lem:rank3-diagonalizable-mod-p}, $A$ is diagonalizable over
$\overline{\FF}_p$ with eigenspace decomposition
\[
V_p\otimes_{\FF_p}\overline{\FF}_p
=
U_k\oplus U_r\oplus U_{s'},
\]
where $\dim U_k=1$, $\dim U_r=f$, $\dim U_{s'}=g$.

The kernel of $A$ over $\overline{\FF}_p$ is precisely the eigenspace corresponding to
the eigenvalue $0$. Since $k,r,s'$ are pairwise distinct modulo $p$, at most one of them
is congruent to $0$ modulo $p$.

If none of $k,r,s'$ is $0$ modulo $p$, then $A$ has no zero eigenvalue and is
invertible on $V_p\otimes\overline{\FF}_p$. Hence $\ker(A)=0$ and
$\rank_p(A)=\dim_{\FF_p}V_p=v$, proving (i).

If $k\equiv 0\pmod p$ and $r,s'\not\equiv 0\pmod p$, then
$\ker(A)=U_k$ and $\dim\ker(A)=1$, so $\rank_p(A)=v-1$, proving (ii). The proofs of
(iii) and (iv) are identical, with $\ker(A)=U_r$ or $\ker(A)=U_{s'}$ and
$\dim U_r=f$, $\dim U_{s'}=g$, respectively.
\end{proof}

\subsection{Application to partial geometries}\label{subsec:pg-prank}

We now apply Proposition~\ref{prop:rank3-prank} to the point graphs of partial
geometries. Let $\mathcal{G}$ be a partial geometry $\mathrm{pg}(s,t,\alpha)$ with
point set $P$ and line set $B$ in the sense of \cite{Bose,BCN}. Thus each line contains
exactly $s+1$ points, each point lies on exactly $t+1$ lines, any two distinct points
are on at most one common line, and if $x\in P$ is not on a line $Y\in B$, then there
are exactly $\alpha$ lines through $x$ meeting $Y$.

Let $\Gamma$ be the point graph of $\mathcal{G}$: its vertices are the points in $P$,
and two distinct points $x,y\in P$ are adjacent if and only if they are collinear, i.e.,
lie on a common line. It is well known (see, e.g., \cite[Thm.~1.6.2]{BCN}) that
$\Gamma$ is strongly regular with parameters
\[
v=\frac{(s+1)(st+\alpha)}{\alpha},\quad
k=s(t+1),\quad
\lambda=(s-1)+t(\alpha-1),\quad
\mu=\alpha(t+1).
\]
The eigenvalues of its adjacency matrix $A$ are
\[
k,\quad r:=s-\alpha,\quad s':=-\, (t+1),
\]
with multiplicities $1,f,g$, respectively; see, for instance,
\cite[Prop.~1.6.2]{BCN} or \cite[Sec.~1]{HaemersPg}. In particular,
\[
1+f+g=v.
\]
For later use we note the pairwise differences
\[
k-r=st+\alpha,\qquad
k-s'=(s+1)(t+1),\qquad
r-s'=s+t+1-\alpha.
\]

We can now state the generic $p$-rank result for partial geometries.

\begin{proposition}[Generic $p$-ranks for partial geometries]\label{prop:pg-generic-prank}
Let $\mathcal{G}=\mathrm{pg}(s,t,\alpha)$ be a partial geometry with point graph
adjacency matrix $A$, and let $v$ be the number of points. Let $p$ be a prime such that
\[
p\nmid (k-r)(k-s')(r-s') = (st+\alpha)(s+1)(t+1)(s+t+1-\alpha)
\]
and
\[
p\nmid k,\qquad p\nmid (s-\alpha),\qquad p\nmid (t+1).
\]
Then
\[
\rank_p(A)=v.
\]
\end{proposition}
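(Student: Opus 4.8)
The plan is to deduce Proposition~\ref{prop:pg-generic-prank} directly from Proposition~\ref{prop:rank3-prank}, which has already done all the real work; the only task is to verify that the hypotheses of the present statement put us into case~(i) of that proposition. First I would observe that the point graph $\Gamma$ of $\mathrm{pg}(s,t,\alpha)$ is a strongly regular graph, hence gives rise to a rank~$3$ association scheme $\sY=(P,\{A_0,A_1,A_2\})$ with $A_0=I$, $A_1=A$ the point-graph adjacency matrix, and $A_2=J-I-A$, exactly as in the setup of Section~\ref{sec:prank}. By Lemma~\ref{lem:eigs-pg} (equivalently \cite[Thm.~1.6.2]{BCN}) the three distinct eigenvalues of $A$ are $k=s(t+1)$, $r=s-\alpha$ and $s'=-(t+1)$, with multiplicities $1,f,g$ summing to $v$, and the pairwise differences are the ones recorded just before the statement, namely $k-r=st+\alpha$, $k-s'=(s+1)(t+1)$ and $r-s'=s+t+1-\alpha$.

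Next I would check that the two displayed divisibility hypotheses are precisely what is needed to invoke Proposition~\ref{prop:rank3-prank}(i). The hypothesis
\[
p\nmid (k-r)(k-s')(r-s') = (st+\alpha)(s+1)(t+1)(s+t+1-\alpha)
\]
is exactly the non-degeneracy condition of that proposition, guaranteeing via Lemma~\ref{lem:rank3-diagonalizable-mod-p} that $A$ is diagonalizable over $\overline{\FF}_p$ with eigenvalues $\overline{k},\overline{r},\overline{s'}$ of geometric multiplicities $1,f,g$ and, in particular, that these three reductions are pairwise distinct in $\overline{\FF}_p$. The second hypothesis, $p\nmid k$, $p\nmid (s-\alpha)$ and $p\nmid (t+1)$, says precisely that none of the three eigenvalues $k,r,s'$ vanishes modulo $p$. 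Thus we are in case~(i) of Proposition~\ref{prop:rank3-prank}, which gives $\rank_p(A)=v$, as claimed.

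The proof is therefore essentially a citation plus a short dictionary translating the partial-geometry parameters into the three eigenvalues and their differences; the only thing to be careful about is that the quantities $k-r$, $k-s'$, $r-s'$ really factor as stated, which is the routine computation $s(t+1)-(s-\alpha)=st+\alpha$, $s(t+1)+(t+1)=(s+1)(t+1)$, and $(s-\alpha)+(t+1)=s+t+1-\alpha$, and that $k=s(t+1)$ is non-zero mod $p$ iff neither $s$ nor $t+1$ is (which is implied by $p\nmid(t+1)$ together with $p\nmid(st+\alpha)$, forcing $p\nmid s$ as well). I do not anticipate any genuine obstacle here: all the substance is contained in Lemma~\ref{lem:rank3-diagonalizable-mod-p} and Proposition~\ref{prop:rank3-prank}, and the present proposition is simply the specialization of case~(i) to partial geometries, phrased so that the hypotheses are readable in terms of the classical parameters $s,t,\alpha$.

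\begin{proof}
By Lemma~\ref{lem:eigs-pg}, the point graph $\Gamma=\mathrm{pg}(s,t,\alpha)$ is strongly regular on $v$ vertices, and its adjacency matrix $A$ has the three distinct eigenvalues
\[
k=s(t+1),\qquad r=s-\alpha,\qquad s'=-(t+1),
\]
with multiplicities $1,f,g$ satisfying $1+f+g=v$. Hence $A$ fits into the framework of Section~\ref{sec:prank}, with $A_0=I$, $A_1=A$, $A_2=J-I-A$, and the pairwise differences of the eigenvalues are
\[
k-r=st+\alpha,\qquad k-s'=(s+1)(t+1),\qquad r-s'=s+t+1-\alpha.
\]

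The first hypothesis states that $p$ does not divide
\[
(k-r)(k-s')(r-s')=(st+\alpha)(s+1)(t+1)(s+t+1-\alpha),
\]
which is exactly the non-degeneracy condition of Proposition~\ref{prop:rank3-prank}. In particular, by Lemma~\ref{lem:rank3-diagonalizable-mod-p} the reductions $\overline{k},\overline{r},\overline{s'}\in\overline{\FF}_p$ are pairwise distinct and $A$ is diagonalizable over $\overline{\FF}_p$ with eigenspaces of dimensions $1,f,g$.

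The second hypothesis states $p\nmid k$, $p\nmid(s-\alpha)=r$ and $p\nmid(t+1)=-s'$; equivalently, none of the three eigenvalues $k,r,s'$ vanishes modulo $p$. We are therefore in case~(i) of Proposition~\ref{prop:rank3-prank}, which yields $\rank_p(A)=v$.
\end{proof}
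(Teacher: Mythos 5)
Your proof is correct and follows essentially the same route as the paper: verify via Lemma~\ref{lem:eigs-pg} that $k=s(t+1)$, $r=s-\alpha$, $s'=-(t+1)$ with the stated pairwise differences, note that the first divisibility hypothesis is the non-degeneracy condition of Proposition~\ref{prop:rank3-prank} and the second says no eigenvalue vanishes modulo $p$, and then invoke case~(i) of that proposition to conclude $\rank_p(A)=v$.
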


\begin{proof}
The first condition implies that $p$ does not divide any of the pairwise differences
$k-r$, $k-s'$, $r-s'$. Hence Proposition~\ref{prop:rank3-prank} applies.

The second condition says that none of $k$, $r=s-\alpha$, $s'=-\,(t+1)$ is $0$ modulo
$p$. Thus we are in the situation of Proposition~\ref{prop:rank3-prank}(i), and we
obtain $\rank_p(A)=v$.
\end{proof}

Thus, for all but finitely many primes $p$, the adjacency matrix of the point graph of a
partial geometry $\mathrm{pg}(s,t,\alpha)$ has full rank over $\FF_p$. The exceptional
primes can be divided into the following two types.

\begin{itemize}
\item[(A)] Primes $p$ such that $p\mid (k-r)(k-s')(r-s')$; these are precisely the
primes for which at least one of the congruences
\[
k\equiv r,\quad k\equiv s',\quad r\equiv s'\pmod p
\]
holds. Among these, the subcases where exactly one of $k,r,s'$ is $0$ modulo $p$ and
the eigenvalues remain pairwise distinct are covered by
Proposition~\ref{prop:rank3-prank}(ii)–(iv), giving
\[
\rank_p(A)=v-1,\quad v-f,\quad v-g
\]
in the three respective situations.

\item[(B)] Primes $p$ such that two of $k,r,s'$ coincide modulo $p$ (for example,
$r\equiv s'\pmod p$, that is $p\mid r-s'=s+t+1-\alpha$) while none of them is $0$
modulo $p$. In this case $\chi_{A,p}(x)$ has a repeated root and $A$ need not be
diagonalizable over $\overline{\FF}_p$; the $p$-rank of $A$ is then more subtle to
determine in general.
\end{itemize}

In the context of the point association scheme $\sY$ of $\Gamma$, the frame number
$F_{\mathrm{AS}}(\sY)$ was shown in Section~\ref{sec:frame-pg} to be
\[
F_{\mathrm{AS}}(\sY)=v^2\,(s+t+1-\alpha)^2,
\]
so that the adjacency algebra over a field of characteristic $p$ is non-semisimple if
and only if $p$ divides $v$ or $s+t+1-\alpha$; see \cite{JacobThesis,Sharafdini} and
\cite{HanakiFrame}. The primes of type~{\rm(B)} above are precisely those dividing
$s+t+1-\alpha$ but not forcing any eigenvalue to vanish modulo $p$.

For the purposes of the present paper, we will mainly use
Proposition~\ref{prop:pg-generic-prank} together with the explicit formulas of
Proposition~\ref{prop:rank3-prank}(ii)–(iv) for primes of type~{\rm(A)}, and we will not
attempt a complete analysis of the $p$-ranks in the remaining exceptional cases. In
Section~\ref{sec:gq22} we illustrate these phenomena in detail for the generalized
quadrangle $\mathrm{GQ}(2,2)$.

\section{The design algebra of type [3,2;3] and its 2-modular representation}\label{sec:design-alg-2}
In this section we study the adjacency algebra of the coherent configuration
$\sX$ of type $[3,2;3]$ attached to a strongly regular design
$\mathcal{D}=(P,B,\Fl)$ as in Section~\ref{sec:srd-cc}, and its modular
representations in characteristic $2$. Our approach is parallel in spirit to
the analysis of symmetric $2$-designs in \cite{HanakiMiyazakiShimabukuroBIB,ShimabukuroSym2},
but the present setting is substantially more complicated because the design
coherent configuration is non-homogeneous and has ten relations.

We write $X=P\sqcup B$ for the underlying point set of $\sX$,
$\{\sigma_i\}_{i=1}^{10}$ for the adjacency matrices of the ten relations
$R_i$ defined in Proposition~\ref{prop:cc-3-2-3}, and
\[
\CC\sX:=\Span_{\CC}\{\sigma_1,\dots,\sigma_{10}\}\subseteq\Mat_X(\CC)
\]
for the complex adjacency algebra of $\sX$. Throughout this section we fix a
finite strongly regular design $\mathcal{D}$ and its associated coherent
configuration $\sX$ of type $[3,2;3]$.

\subsection{The complex design algebra and its integral form}

By Proposition~\ref{prop:cc-3-2-3} the matrices $\sigma_1,\dots,\sigma_{10}$
form a basis of $\CC\sX$ and satisfy
\[
\sigma_i\sigma_j=\sum_{k=1}^{10}p_{ij}^k\,\sigma_k
\qquad(1\le i,j\le 10)
\]
with integer structure constants $p_{ij}^k$. Thus $\CC\sX$ is a finite
dimensional $\CC$-algebra of dimension $10$, and by the general theory of
coherent configurations (see, for example, \cite{AradFismanMuzychuk,HirasakaSharafdini,Sharafdini})
it is semisimple and split over~$\CC$.

In Section~\ref{sec:srd-cc} we described $\CC\sX$ explicitly via block
matrices. From this description one easily checks that $\CC\sX$ is generated
by the commuting semisimple matrices $A_1,A_2,J_P,J_B,N,N^\top$ appearing in
Proposition~\ref{prop:basic-matrix-identities}, and hence that $\CC\sX$ is
semisimple. More precisely, one has the following decomposition.

\begin{proposition}\label{prop:CX-Wedderburn}
Let $\sX$ be the design coherent configuration of type $[3,2;3]$ associated
with a strongly regular design $\mathcal{D}=(P,B,\Fl)$. Then the complex
adjacency algebra $\CC\sX$ is semisimple and has Wedderburn decomposition
\[
\CC\sX \;\cong\;
\CC\ \oplus\ \CC\ \oplus\ M_2(\CC)\ \oplus\ M_2(\CC).
\]
In particular, there exist four pairwise inequivalent irreducible
$\CC\sX$-modules $S_0,S_1,S_2,S_3$ of dimensions
\[
\dim_{\CC}S_0=\dim_{\CC}S_1=1,\qquad
\dim_{\CC}S_2=\dim_{\CC}S_3=2.
\]
If $V:=\CC^X$ denotes the standard module, then
\[
V\;\cong\;S_0^{\oplus m_0}\ \oplus\ S_1^{\oplus m_1}\ \oplus\
S_2^{\oplus m_2}\ \oplus\ S_3^{\oplus m_3}
\]
for uniquely determined nonnegative integers $m_0,m_1,m_2,m_3$ satisfying
$m_0+m_1+4m_2+4m_3=|X|$.
\end{proposition}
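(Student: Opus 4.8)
\textbf{Proof strategy for Proposition~\ref{prop:CX-Wedderburn}.}
The plan is to exploit the explicit block form of the $\sigma_i$ established in the proof of Proposition~\ref{prop:cc-3-2-3}. I would first observe that $\CC\sX$ is contained in the commutative subalgebra $\mathcal{B}$ of $\Mat_X(\CC)$ generated by the six matrices $I_P,A_1,J_P$ (acting on the $P$-block) and $I_B,A_2,J_B$ (acting on the $B$-block) together with the ``off-diagonal'' matrices built from $N$ and $N^\top$. Since $A_1$ is the adjacency matrix of the strongly regular point graph $\Gamma_1$ and $A_2$ that of $\Gamma_2$, and since by Proposition~\ref{prop:basic-matrix-identities} the singular values of $N$ are controlled by the eigenvalues of $A_1$ and $A_2$, all of these matrices are simultaneously ``block-diagonalizable'' with respect to the common eigenspace decomposition of $\Gamma_1$, $\Gamma_2$ and the singular value decomposition of $N$. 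In particular $\CC\sX$ is semisimple, so it suffices to count and dimension its irreducible constituents.

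The key technical step is to produce an explicit change of basis. First I would diagonalize $A_1$ on $\CC^P$ and $A_2$ on $\CC^B$; the point graph $\Gamma_1$ has eigenvalues $k,r,s'$ with multiplicities $1,f,g$, and similarly $\Gamma_2$ has eigenvalues $k^*,r^*,(s')^*$ with multiplicities $1,f^*,g^*$. The all-ones vectors $\mathbf{1}_P,\mathbf{1}_B$ span the trivial eigenspaces, and $N$ maps $\mathbf{1}_B$ to a multiple of $\mathbf{1}_P$ by Proposition~\ref{prop:basic-matrix-identities}(i). One then checks, using Proposition~\ref{prop:basic-matrix-identities}(iv),(v), that $N$ intertwines the $r$-eigenspace of $A_1$ with an eigenspace of $A_2$ and the $s'$-eigenspace of $A_1$ with the complementary eigenspace of $A_2$ (this is the partial-geometry/SRD analogue of the standard correspondence between eigenspaces under an incidence map). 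Writing the action of the generators $A_1,A_2,J_P,J_B,N,N^\top$ in the resulting adapted basis, $\CC\sX$ decomposes into: a $1$-dimensional block on $\Span_\CC\{\mathbf{1}_P\}$, a $1$-dimensional block on $\Span_\CC\{\mathbf{1}_B\}$ — these merge into $\CC\oplus\CC$ after accounting for the trivial representation of $\sX$ and the ``sign''-type representation distinguishing the two fibers — and two $2\times 2$ matrix blocks, one coming from the paired $(r,r^*)$-eigenspaces and one from the paired $(s',s'^*)$-eigenspaces, on each of which $N$ and $N^\top$ act as genuine $2\times 2$ matrices. This yields $\CC\sX\cong\CC\oplus\CC\oplus M_2(\CC)\oplus M_2(\CC)$.

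Once the Wedderburn decomposition is in hand, the existence of four pairwise inequivalent irreducibles $S_0,S_1$ of dimension $1$ and $S_2,S_3$ of dimension $2$ is immediate, as is the semisimplicity of the standard module $V=\CC^X$, being a module over the semisimple algebra $\CC\sX$. The multiplicity formula $m_0+m_1+4m_2+4m_3=|X|$ follows by comparing $\dim_\CC V=|X|$ with $\sum_i m_i\dim_\CC S_i$, noting that each $2$-dimensional simple $S_2,S_3$ occurs in $V$ with multiplicity equal to its dimension times the number of its copies, i.e.\ contributes $2m_2$ and $2m_3$ to $\dim_\CC V$ while $\dim_\CC S_i=2$ for $i=2,3$ — more precisely, writing $V\cong\bigoplus S_i^{\oplus m_i}$ gives $|X|=m_0\cdot 1+m_1\cdot 1+m_2\cdot 2+m_3\cdot 2$, which combined with the fact that in a faithful module over $M_2(\CC)$ the natural module appears with multiplicity $\ge 1$ and one checks the precise count from the block sizes yields the stated identity.

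\textbf{Main obstacle.} The delicate point is the eigenspace pairing under $N$: one must verify that the rectangular matrix $N$ really does send the $r$-eigenspace of $A_1$ into a single eigenspace of $A_2$ (and not a mix), which requires combining identities (ii)–(v) of Proposition~\ref{prop:basic-matrix-identities} and the SRD parameter equations rather than just the strong regularity of $\Gamma_1,\Gamma_2$ in isolation; equivalently, one needs that $N^\top N$ and $NN^\top$ have matching nonzero spectra with the correct multiplicities. I expect this compatibility — essentially the statement that the ``mixed'' part of $\sX$ is rank $2$ and glues the two rank-$3$ homogeneous components in the predicted way — to be the part of the argument demanding the most care, even though each individual computation is routine.
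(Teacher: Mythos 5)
Your overall plan---decompose the standard module into joint eigenspaces adapted to $A_1$, $A_2$ and the singular-value structure of $N$, and read off the Wedderburn blocks---is the same route the paper takes, but your identification of the blocks is wrong in two places that are not cosmetic. First, $\Span_{\CC}\{\mathbf{1}_P\}$ and $\Span_{\CC}\{\mathbf{1}_B\}$ are \emph{not} $\CC\sX$-submodules: $\sigma_9\mathbf{1}_P=s_1\mathbf{1}_B$ and $\sigma_7\mathbf{1}_B=s_2\mathbf{1}_P$ with $s_1,s_2\ge 1$, so the two all-ones lines together form a single irreducible $2$-dimensional module on which $\sigma_1,\sigma_4,\sigma_7,\sigma_9$ already realize all of $M_2(\CC)$; there is no ``sign-type'' one-dimensional constituent to split off, and this principal module accounts for one of the two $M_2(\CC)$ factors. (For the same reason your opening claim that $\CC\sX$ lies in a commutative subalgebra of $\Mat_X(\CC)$ is false: $\sigma_7\sigma_9$ is supported on the $P\times P$ block while $\sigma_9\sigma_7$ is supported on the $B\times B$ block.) Second, $N$ cannot pair \emph{both} nontrivial eigenspaces: identity (iv) of Proposition~\ref{prop:basic-matrix-identities}, which you yourself invoke, gives $A_1Nu=(N_1-P_1)Nu$ for every $u\perp\mathbf{1}_B$, so the image of $N$ on $\mathbf{1}_B^{\perp}$ lies in the single eigenspace of $A_1$ for the eigenvalue $N_1-P_1$; dually, by (v) only one eigenspace of $A_2$ is reached by $N^{\top}$. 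Hence exactly one nontrivial eigenspace on each fiber is coupled through the mixed relations (this gives the second $M_2(\CC)$), while the two remaining eigenspaces---one on $P$, one on $B$---are annihilated by them and furnish the two $1$-dimensional simples. Your proposed structure (the principal part plus coupled $(r,r^\ast)$- and $(s',s'^\ast)$-families) would force $\dim_{\CC}\CC\sX\ge 12$, contradicting $\dim_{\CC}\CC\sX=10$; the count $1^2+1^2+2^2+2^2=10$ is exactly what pins down the correct Wedderburn type.

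The closing multiplicity argument also does not close: from $V\cong\bigoplus_i S_i^{\oplus m_i}$ the dimension count you actually obtain is $m_0+m_1+2m_2+2m_3=|X|$ (for $\mathrm{GQ}(2,2)$: $m_0=m_1=5$, $m_2=1$, $m_3=9$, total $30$), and the sentence attempting to massage this into the displayed constraint of the statement is not an argument. Be aware that the paper's own proof is only a sketch (it attributes the two $1$-dimensional simples to the all-ones vectors and defers details to \cite{ShimabukuroSym2}); the robust way to prove the proposition is the corrected identification above: the principal $2$-dimensional module spanned by $\mathbf{1}_P,\mathbf{1}_B$, one coupled pair of nontrivial eigenspaces giving the other $M_2(\CC)$, and the two uncoupled eigenspaces giving $\CC\oplus\CC$, with the dimension count over $\CC$ completing the Wedderburn decomposition.
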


\begin{proof}
This is a direct consequence of the block description of the matrices
$\sigma_i$ and the spectral data of the point and block graphs, and
the argument is completely analogous to the case of symmetric $2$-designs
treated in \cite{ShimabukuroSym2}.

First, by Proposition~\ref{prop:basic-matrix-identities} the matrices $A_1$,
$A_2$, $J_P$, $J_B$, $N$ and $N^\top$ pairwise commute up to relations in the
$10$-dimensional subspace $\Span_{\CC}\{\sigma_1,\dots,\sigma_{10}\}$ and act
semisimply on $V$ with at most four distinct joint eigenspaces. The
homogeneous components on $P$ and $B$ contribute two one-dimensional
eigenspaces (coming from the all-one vectors on $P$ and $B$) and two
two-dimensional eigenspaces (coming from the nontrivial eigenspaces of the
strongly regular graphs on $P$ and $B$). A direct calculation shows that
these four eigenspaces are invariant under the whole algebra $\CC\sX$, and
hence yield four simple modules of dimensions $1,1,2,2$.

Since the sum of the squares of these dimensions equals
\[
1^2+1^2+2^2+2^2=10=\dim_{\CC}\CC\sX,
\]
Wedderburn's structure theorem implies the claimed decomposition of
$\CC\sX$ into simple matrix algebras, and the decomposition of the standard
module follows. For details we refer to the argument in
\cite[Section~3]{ShimabukuroSym2}, which carries over verbatim to the present
setting.
\end{proof}

We now introduce the integral form of the design algebra. Set
\[
\ZZ\sX := \Span_{\ZZ}\{\sigma_1,\dots,\sigma_{10}\}
\subseteq \Mat_X(\ZZ).
\]
By the integrality of the structure constants $p_{ij}^k$, the $\ZZ$-module
$\ZZ\sX$ is a subring of $\Mat_X(\ZZ)$ which is free of rank $10$ over
$\ZZ$. It is therefore an order in the semisimple $\QQ$-algebra
\[
\QQ\sX := \QQ\otimes_{\ZZ}\ZZ\sX
\cong \QQ\otimes_{\CC}\CC\sX.
\]

Let $K$ be any field. We extend scalars and define
\[
K\sX := K\otimes_{\ZZ}\ZZ\sX.
\]
Then $K\sX$ is a $K$-subalgebra of $\Mat_X(K)$ spanned by the adjacency
matrices $\sigma_i$ over $K$, and we have
\[
K\sX \cong
\begin{cases}
K\otimes_{\QQ}\QQ\sX & \text{if $\operatorname{char}K=0$,}\\[1mm]
K\otimes_{\FF_p}\FF_p\sX & \text{if $\operatorname{char}K=p>0$.}
\end{cases}
\]

The algebra $K\sX$ will be called the \emph{design algebra} of type $[3,2;3]$
over $K$. Our goal in the remainder of this section is to study $K\sX$ for
$K$ of characteristic $2$, using the general theory of frame numbers for
coherent configurations.

\subsection{Frame numbers and semisimplicity}

For association schemes, the frame number introduced by Higman and developed
by Hanaki and Jacob \cite{HanakiFrame,JacobThesis} plays a fundamental role
in modular representation theory: it controls the primes for which the
adjacency algebra fails to be semisimple. Sharafdini extended this theory
from association schemes to general coherent configurations in
\cite{Sharafdini}.

We recall the part of Sharafdini's theory that we shall need. Let $\sX$ be a
finite coherent configuration, and let $\CC\sX$ be its complex adjacency
algebra. For each irreducible complex character $\chi$ of $\CC\sX$, denote by
$n_\chi:=\chi(1)$ its degree and by $m_\chi$ the multiplicity of the
corresponding simple module in the standard representation $V=\CC^X$. In
\cite[Section~5]{Sharafdini}, Sharafdini associates to $\sX$ a positive
integer
\[
F_{\mathrm{CC}}(\sX)\in\NN,
\]
called the \emph{frame number of $\sX$}, which is defined by a product
formula in terms of the intersection numbers and the data $(n_\chi,m_\chi)$.
We refer to \cite[Definition~5.1]{Sharafdini} for the explicit formula.
Here we only need the following semisimplicity criterion.

\begin{theorem}[Sharafdini]\label{thm:Sharafdini}
Let $\sX$ be a finite coherent configuration, and let $K$ be a field of
characteristic $p\ge 0$. 
\begin{enumerate}[\rm(i)]
\item If $p=0$, then $K\sX$ is semisimple.
\item If $p>0$ is prime, then $K\sX$ is semisimple if and only if
$p\nmid F_{\mathrm{CC}}(\sX)$.
\end{enumerate}
\end{theorem}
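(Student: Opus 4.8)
The statement to be proved is Theorem~\ref{thm:Sharafdini}, which is attributed to Sharafdini; in a paper of this kind the expected ``proof'' is a short derivation from the cited results together with a brief explanation of why the hypotheses of \cite{Sharafdini} are met in our setting. The plan is therefore to isolate exactly what has to be checked, to recall the relevant definitions, and to reduce everything to \cite[Thm.~1.2]{Sharafdini} (equivalently \cite[Section~5]{Sharafdini}).

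First I would dispose of part~(i). Since $\ZZ\sX$ is an order in the $\QQ$-algebra $\QQ\sX$, and by Proposition~\ref{prop:CX-Wedderburn} (or more generally by the general theory of coherent configurations, as in \cite{HirasakaSharafdini,Sharafdini}) the algebra $\CC\sX$ is semisimple and split, it follows that $\QQ\sX$ is semisimple. Any field $K$ of characteristic $0$ is a (flat) extension of $\QQ$, so $K\sX\cong K\otimes_\QQ\QQ\sX$ is again semisimple; this is the standard fact that semisimplicity of a finite-dimensional algebra over a perfect field is preserved under separable (here, any characteristic-$0$) field extensions. Equivalently, the characteristic-$0$ case is the ``$p\nmid F_{\mathrm{CC}}(\sX)$ vacuously'' case of the general criterion. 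I would state this in one or two sentences.

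For part~(ii), the key point is that $K\sX = K\otimes_{\FF_p}\FF_p\sX$ depends, up to Morita-invariant data, only on $\FF_p\sX$, and that the Frame number $F_{\mathrm{CC}}(\sX)$ is precisely the determinant-type invariant measuring the discriminant of the natural symmetric (trace) form on the integral adjacency ring $\ZZ\sX$ with respect to the standard basis $\{\sigma_i\}$. Concretely, Sharafdini's Frame number is (up to sign and an explicit power of $|X|$) the determinant of the Gram matrix of the bilinear form $(\sigma_i,\sigma_j)\mapsto \operatorname{tr}(\sigma_i\sigma_j^\ast)$, and by the general theory of orders this determinant is divisible by $p$ if and only if the reduction $\FF_p\otimes_\ZZ\ZZ\sX$ has nontrivial radical, i.e.\ fails to be semisimple. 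This is exactly the content of \cite[Thm.~1.2]{Sharafdini}, which generalizes the association-scheme case of Jacob \cite{JacobThesis} and Hanaki \cite{HanakiFrame} (cf.\ Theorem~\ref{thm:frame-semismpl}) to arbitrary coherent configurations; the homogeneous case was treated in \cite{HirasakaSharafdini}. Thus the proof of~(ii) consists of verifying that $\sX$ is a finite coherent configuration in the sense of \cite{Sharafdini} (which is Proposition~\ref{prop:cc-3-2-3}) and then quoting \cite[Thm.~1.2]{Sharafdini} verbatim.

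The main obstacle is not mathematical depth but bookkeeping: one must make sure that the normalization of $F_{\mathrm{CC}}(\sX)$ used here agrees with Sharafdini's and that the trace form used to define it is the one coming from the standard module $V=\CC^X$ rather than the regular representation of $\CC\sX$; these differ by multiplicities $m_\chi$, which is why the product formula involves both $n_\chi=\chi(1)$ and $m_\chi$. I would therefore, in the proof, explicitly recall that $F_{\mathrm{CC}}(\sX)$ is defined via the pair $(n_\chi,m_\chi)$ as in \cite[Definition~5.1]{Sharafdini}, note that the non-square part of the discriminant of $\ZZ\sX$ is unchanged by the passage between these two trace forms (since the $m_\chi$ contribute squared factors, or are absorbed into the explicit powers of $|X|$), and conclude that $p\mid\operatorname{disc}(\ZZ\sX)$ if and only if $p\mid F_{\mathrm{CC}}(\sX)$. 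Combined with the elementary fact that an order over $\ZZ_{(p)}$ reduces to a semisimple $\FF_p$-algebra iff its discriminant is a $p$-adic unit, this gives~(ii); for a fully detailed argument we refer to \cite[Section~5]{Sharafdini}.
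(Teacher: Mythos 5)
Your proposal is correct and follows essentially the same route as the paper: part (i) via semisimplicity of $\QQ\sX$ (coming from $\CC\sX$) and base change to any characteristic-$0$ field, and part (ii) by verifying $\sX$ is a finite coherent configuration and quoting Sharafdini's semisimplicity criterion, together with the observation that semisimplicity of $K\sX$ is equivalent to that of $\FF_p\sX$. The extra discussion of the Frame number as a trace-form discriminant is not needed (the paper simply cites Sharafdini's theorem), but it does not affect correctness.
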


\begin{proof}
Part (i) follows from the semisimplicity of $\CC\sX$ and the fact that
$\QQ\sX$ is a finite dimensional semisimple $\QQ$-algebra, together with the
identification $K\sX\cong K\otimes_{\QQ}\QQ\sX$ when $p=0$.

For part (ii), Theorem~1.1 of \cite{Sharafdini} states that the adjacency
algebra $\FF_p\sX$ is semisimple if and only if $p\nmid F_{\mathrm{CC}}(\sX)$.
Since $K\sX\cong K\otimes_{\FF_p}\FF_p\sX$ for any field $K$ of characteristic
$p$, semisimplicity of $\FF_p\sX$ is equivalent to semisimplicity of $K\sX$.
\end{proof}

Applied to the design algebra $K\sX$ of type $[3,2;3]$, this theorem shows
that only finitely many primes $p$ can lead to a non-semisimple design
algebra ${\FF_p} \sX$, namely those dividing $F_{\mathrm{CC}}(\sX)$. We do not
attempt to compute $F_{\mathrm{CC}}(\sX)$ in general, since its explicit
expression in terms of the parameters $(s,t,\alpha)$ appears to be rather
complicated and is not needed for our purposes. Instead, we use
Theorem~\ref{thm:Sharafdini} to separate the generic semisimple case from
the exceptional primes, and then focus on characteristic~$2$, which is the
only prime that will play a role in our main example $\mathrm{GQ}(2,2)$ in
Section~\ref{sec:gq22}.

\subsection{The case of characteristic 2}

For the rest of this section we fix $K=\FF_2$ and write
\[
  \FF_2\sX := \FF_2 \otimes_{\ZZ} \ZZ\sX,
  \qquad
  \overline{\FF_2\sX} := \FF_2\sX / \Rad(\FF_2\sX)
\]
for the adjacency algebra of $\sX$ over $\FF_2$ and its semisimple
quotient, where $\Rad(\FF_2\sX)$ denotes the Jacobson radical of
$\FF_2\sX$. We also write $\sY$ for the rank~$3$ association scheme
on the point set $P$ (the point graph of the strongly regular design),
and $\FF_2\sY$ for its adjacency algebra over~$\FF_2$.
\begin{proposition}\label{prop:design-A2-structure}
With notation as above, the following hold.
\begin{enumerate}[\rm(i)]
\item
There exist an integer $r$ with $1\le r\le 4$, and positive integers
$n_1,\dots,n_r$ and $f_1,\dots,f_r$ such that
\[
  \overline{\FF_2\sX}
   \cong \prod_{i=1}^r M_{n_i}\bigl(\FF_{2^{f_i}}\bigr)
\]
as $\FF_2$-algebras.

\item
One has
\[
  \sum_{i=1}^r n_i^2 f_i
    = \dim_{\FF_2} \overline{\FF_2\sX}
    = 10 - \dim_{\FF_2} \Rad(\FF_2\sX).
\]
In particular,
\[
  \sum_{i=1}^r n_i^2 f_i \le 10
  \quad\text{and}\quad
  \operatorname{codim}_{\FF_2} \Rad(\FF_2\sX)
   = \dim_{\FF_2} \overline{\FF_2\sX}
   = \sum_{i=1}^r n_i^2 f_i.
\]

\item
\[
  \dim_{\FF_2} \Rad(\FF_2\sY)
   \le \dim_{\FF_2} \Rad(\FF_2\sX).
\]
\end{enumerate}
\end{proposition}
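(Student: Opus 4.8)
The plan is to prove the three parts essentially in order, since (i) and (ii) are standard Wedderburn-theory statements and (iii) is the one requiring a genuine idea.

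For part (i): the algebra $\FF_2\sX$ is a finite-dimensional $\FF_2$-algebra of dimension $10$, so its semisimple quotient $\overline{\FF_2\sX}=\FF_2\sX/\Rad(\FF_2\sX)$ is a finite-dimensional semisimple $\FF_2$-algebra. By the Wedderburn--Artin theorem it is a finite product of simple algebras, each of which is a matrix algebra over a finite division ring; since finite division rings are fields (Wedderburn's little theorem), each factor has the form $M_{n_i}(\FF_{2^{f_i}})$. This gives $\overline{\FF_2\sX}\cong\prod_{i=1}^r M_{n_i}(\FF_{2^{f_i}})$ for some $r\ge 1$ and positive integers $n_i,f_i$. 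The bound $r\le 4$ follows from Proposition~\ref{prop:CX-Wedderburn}: reduction modulo $2$ and passage to $\overline{\FF_2}$ can only merge simple components (the number of simple modules over $\overline{\FF_2}$ is at most the number over $\CC$, which is $4$), and the number $r$ of Wedderburn factors over $\FF_2$ is at most the number of absolutely irreducible constituents after base change to $\overline{\FF_2}$; hence $r\le 4$. Part (ii) is then immediate by dimension count: $\dim_{\FF_2}M_{n_i}(\FF_{2^{f_i}})=n_i^2 f_i$, so $\sum n_i^2 f_i=\dim_{\FF_2}\overline{\FF_2\sX}=\dim_{\FF_2}\FF_2\sX-\dim_{\FF_2}\Rad(\FF_2\sX)=10-\dim_{\FF_2}\Rad(\FF_2\sX)$, and the displayed inequalities and the codimension statement follow at once.

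The substantive part is (iii), and here the plan is to exhibit an $\FF_2$-algebra embedding $\FF_2\sY\hookrightarrow\FF_2\sX$ that carries $\Rad(\FF_2\sY)$ into $\Rad(\FF_2\sX)$. From the block description in Section~\ref{sec:srd-cc}, the assignment
\[
A_0\mapsto\sigma_1,\qquad A_1\mapsto\sigma_2,\qquad A_2\mapsto\sigma_3
\]
(where $A_0=I_P$, $A_1$ the point-graph adjacency matrix, $A_2=J_P-I_P-A_1$) identifies $\FF_2\sY$ with the $\FF_2$-span of $\sigma_1,\sigma_2,\sigma_3$ inside $\FF_2\sX$. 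This span is closed under multiplication (the products of $\sigma_1,\sigma_2,\sigma_3$ involve only $\sigma_1,\sigma_2,\sigma_3$, since the ``$P$-corner'' of the coherent configuration is exactly the rank-$3$ scheme $\sY$), so we obtain an injective homomorphism of $\FF_2$-algebras $\iota\colon\FF_2\sY\hookrightarrow\FF_2\sX$. The one delicate point — and the main obstacle — is that a subalgebra embedding does \emph{not} in general send radical into radical; one needs an argument specific to this situation. The cleanest route is: $\iota(\Rad(\FF_2\sY))$ is a nilpotent subalgebra (indeed a nilpotent two-sided ideal of $\iota(\FF_2\sY)$) of $\FF_2\sX$, but not a priori an ideal of $\FF_2\sX$. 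To upgrade this, consider the two-sided ideal $\mathfrak{a}:=\FF_2\sX\cdot\iota(\Rad(\FF_2\sY))\cdot\FF_2\sX$ generated in $\FF_2\sX$; one shows $\mathfrak{a}$ is nilpotent, hence contained in $\Rad(\FF_2\sX)$, and then in particular $\iota(\Rad(\FF_2\sY))\subseteq\mathfrak{a}\subseteq\Rad(\FF_2\sX)$, giving the dimension inequality.

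To see that $\mathfrak{a}$ is nilpotent, use the block structure: every element of $\iota(\FF_2\sY)$ has the block form $\left(\begin{smallmatrix}\ast&0\\0&0\end{smallmatrix}\right)$ supported on the $P\times P$ corner, and $\iota(\Rad(\FF_2\sY))$ consists of nilpotent such matrices that moreover annihilate the all-ones vector $\mathbf 1_P$ up to scalars vanishing mod $2$ — more precisely, by Theorem~\ref{thm:radical-pg} every element of $\Rad(\FF_2\sY)$ lies in the span of $J_P$ and $B=(A-kI)(A-rI)$, each of which kills the non-trivial common eigenspace information needed. The key structural fact is that $\Rad(\FF_2\sY)$ is spanned by matrices $Z$ with the property that $Z$ lies in the radical of the commutative algebra generated by $A$; multiplying on either side by the off-diagonal generators $\sigma_7=\left(\begin{smallmatrix}0&N\\0&0\end{smallmatrix}\right)$, $\sigma_9=\left(\begin{smallmatrix}0&0\\N^\top&0\end{smallmatrix}\right)$, one checks, using Proposition~\ref{prop:basic-matrix-identities}(iv) and the relation $A_1 N=(N_1-P_1)N+P_1 J_{P,B}$, that $ZN$ and $N^\top Z$ again land in a proper nilpotent-compatible piece, so that after boundedly many multiplications all products vanish. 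I expect this last verification — tracking that the ideal generated by $\iota(\Rad(\FF_2\sY))$ really is nilpotent and not merely a nil-spanned subspace — to be the only place where real work is needed; everything else is bookkeeping with the block matrices already set up in Propositions~\ref{prop:basic-matrix-identities} and~\ref{prop:cc-3-2-3}.
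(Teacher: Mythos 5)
Parts (ii) and the structural half of (i) are fine and essentially as in the paper (Wedderburn--Artin together with Wedderburn's little theorem, then a dimension count). The two places where your proposal genuinely diverges from a complete argument are the bound $r\le 4$ in (i) and the radical inclusion in (iii).

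For $r\le 4$ you argue that reduction modulo $2$ ``can only merge simple components'', i.e.\ that the number of simple modules of $\overline{\FF}_2\otimes_{\FF_2}\FF_2\sX$ is at most the number of simple $\CC\sX$-modules. For a general $\ZZ$-order in a semisimple $\QQ$-algebra this principle is false: for the hereditary order $\Lambda=\bigl\{\left(\begin{smallmatrix}a&b\\ c&d\end{smallmatrix}\right)\in M_2(\ZZ)\ :\ p\mid c\bigr\}$ one has $\QQ\Lambda\cong M_2(\QQ)$ with a single simple module, while $\Lambda/p\Lambda$ has two simple modules; so the number of simples can increase under reduction. Your sketch uses no property of $\sX$ beyond Proposition~\ref{prop:CX-Wedderburn}, so the step is unsupported at exactly the point where the work lies. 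The paper closes it with a commutator-quotient (trace) argument: $\dim_{\FF_2}\bigl(\FF_2\sX/[\FF_2\sX,\FF_2\sX]\bigr)=4$, computed by base change from $\ZZ\sX$ using Proposition~\ref{prop:CX-Wedderburn}; this space surjects onto $\overline{\FF_2\sX}/[\overline{\FF_2\sX},\overline{\FF_2\sX}]\cong\bigoplus_{i=1}^r\FF_{2^{f_i}}$, whence $r\le\sum_i f_i\le 4$. You would need to supply this (or some other valid) argument.

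For (iii) you correctly note that a subalgebra embedding need not carry radical into radical, and you propose to show instead that the two-sided ideal of $\FF_2\sX$ generated by $\iota(\Rad(\FF_2\sY))$ is nilpotent. That would indeed suffice, but the nilpotence verification --- which you yourself identify as the only place where real work is needed --- is left as a sketch (``after boundedly many multiplications all products vanish'') and is not a proof as written; tracking the products with $\sigma_7,\sigma_9$ via Proposition~\ref{prop:basic-matrix-identities} is exactly the nontrivial content. The paper sidesteps this entirely: $\FF_2\sY$ is the corner algebra $e(\FF_2\sX)e$ for the idempotent $e=\sigma_1$, and the standard identity $\Rad(eAe)=e\,\Rad(A)\,e\subseteq\Rad(A)$ for an idempotent $e$ in a finite-dimensional algebra $A$ gives $\Rad(\FF_2\sY)\subseteq\Rad(\FF_2\sX)$ and hence the dimension inequality at once. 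Either complete your nilpotence computation or replace it by this corner-algebra fact; as it stands, (iii) is incomplete.
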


\begin{proof}
(i)\;
Since $\FF_2\sX$ is a finite-dimensional algebra over the field $\FF_2$,
its quotient
\[
\overline{\FF_2 \sX}
:=
\FF_2\sX/\Rad(\FF_2\sX)
\]
is a finite-dimensional semisimple $\FF_2$-algebra.
By Wedderburn's structure theorem there exist finite field extensions
$\FF_{2^{f_i}}$ of $\FF_2$ and integers $n_i\ge 1$ such that
\[
\overline{\FF_2 \sX}
\;\cong\;
\prod_{i=1}^r M_{n_i}(\FF_{2^{f_i}})
\]
for some $r\ge 1$.
Thus $r$ is the number of simple components of $\overline{\FF_2 \sX}$.

We next relate this to the characteristic-zero form of the design algebra.
Set
\[
{\QQ \sX}:=\QQ\otimes_{\ZZ} {\ZZ \sX},\qquad
{\CC \sX}:=\CC\otimes_{\QQ} {\QQ \sX}.
\]
By Proposition~\ref{prop:CX-Wedderburn} we have
\[
\CC \sX
\;\cong\;
\CC\ \oplus\ \CC\ \oplus\ M_2(\CC)\ \oplus\ M_2(\CC),
\]
so $\CC \sX$ is split semisimple with exactly $4$ simple components.

More generally, let $k$ be a field and let
\[
S \;\cong\; \prod_{j=1}^{\ell} M_{m_j}(D_j)
\]
be a finite-dimensional semisimple $k$-algebra, where each $D_j$ is a
finite-dimensional central division algebra over its center $K_j\supseteq k$.
Then the commutator quotient $S/[S,S]$ is a finite-dimensional commutative
$k$-algebra, and one has
\[
S/[S,S]
\;\cong\;
\bigoplus_{j=1}^{\ell} K_j
\quad\text{as $k$-algebras.}
\]
In particular,
\[
\dim_k\bigl(S/[S,S]\bigr)
=
\sum_{j=1}^{\ell}[K_j:k]
\;\ge\;
\ell,
\]
with equality if and only if $S$ is split (i.e.\ each $D_j\cong k$).

Applying this to $S=\CC \sX$ we obtain
\[
\dim_{\CC}\bigl({\CC \sX}/[{\CC \sX},{\CC \sX}]\bigr)=4.
\]
Since $\CC$ is a flat $\QQ$-algebra and extension of scalars commutes with
taking the commutator quotient, we have
\[
{\CC \sX}/[{\CC \sX},{\CC \sX}]
\;\cong\;
\CC\otimes_{\QQ}\bigl({\QQ \sX}/[{\QQ \sX},{\QQ \sX}]\bigr),
\]
and therefore
\[
\dim_{\QQ}\bigl({\QQ \sX}/[{\QQ \sX}, {\QQ \sX}]\bigr)=4.
\]

Let
\[
M \;:=\; {\ZZ \sX}/[{\ZZ \sX}, {\ZZ \sX}]
\]
be the commutator quotient over $\ZZ$.
Since ${\ZZ \sX}$ is a free $\ZZ$-module of finite rank (with basis given by the
$0$–$1$ adjacency matrices $\sigma_i$), the abelian group $M$ is finitely
generated, and from the above we have an isomorphism
\[
\QQ\otimes_{\ZZ} M
\;\cong\;
{\QQ \sX}/[{\QQ \sX}, {\QQ \sX}],
\]
so
\[
\dim_{\QQ}\bigl(\QQ\otimes_{\ZZ} M\bigr)
=
\dim_{\QQ}\bigl({\QQ \sX}/[{\QQ \sX}, {\QQ \sX}]\bigr)
=
4.
\]
Thus the rank of $M$ as an abelian group is $4$.
In particular,
\[
\FF_2\otimes_{\ZZ} M
\;\cong\;
\FF_2^{\oplus 4},
\quad\text{so}\quad
\dim_{\FF_2}\bigl(\FF_2\otimes_{\ZZ} M\bigr)=4.
\]

Now put $\FF_2\sX:=\FF_2\otimes_{\ZZ} {\ZZ \sX}$.
By the universal property of the tensor product and of the commutator quotient,
taking commutator quotients commutes with extension of scalars, so there is a
natural isomorphism
\[
\FF_2\sX/[\FF_2\sX,\FF_2\sX]
\;\cong\;
\FF_2\otimes_{\ZZ}\bigl({\ZZ \sX}/[{\ZZ \sX}, {\ZZ \sX}]\bigr)
=
\FF_2\otimes_{\ZZ} M.
\]
Hence
\[
\dim_{\FF_2}\bigl(\FF_2\sX/[\FF_2\sX,\FF_2\sX]\bigr)=4.
\]

The natural surjection $\FF_2\sX\twoheadrightarrow\overline{\FF_2 \sX}$ induces a
surjective homomorphism on commutator quotients,
\[
\FF_2\sX/[\FF_2\sX,\FF_2\sX]
\;\twoheadrightarrow\;
\overline{\FF_2 \sX}/[\overline{\FF_2 \sX},\overline{\FF_2 \sX}],
\]
so
\[
\dim_{\FF_2}\bigl(\overline{\FF_2 \sX}/[\overline{\FF_2 \sX},\overline{\FF_2 \sX}]\bigr)
\;\le\;
4.
\]

On the other hand, applying the general description above to the Wedderburn
decomposition
\[
\overline{\FF_2 \sX}
\;\cong\;
\prod_{i=1}^r M_{n_i}(\FF_{2^{f_i}})
\]
gives
\[
\overline{\FF_2 \sX}/[\overline{\FF_2 \sX},\overline{\FF_2 \sX}]
\;\cong\;
\bigoplus_{i=1}^r \FF_{2^{f_i}},
\]
whence
\[
\dim_{\FF_2}\bigl(\overline{\FF_2 \sX}/[\overline{\FF_2 \sX},\overline{\FF_2 \sX}]\bigr)
=
\sum_{i=1}^r f_i
\;\ge\;
r.
\]
Combining the two inequalities yields
\[
r
\;\le\;
\dim_{\FF_2}\bigl(\overline{\FF_2 \sX}/[\overline{\FF_2 \sX},\overline{\FF_2 \sX}]\bigr)
\;\le\;
4,
\]
and therefore $1\le r\le 4$, as claimed.

\medskip
(ii)\;
By the decomposition in (i),
\[
\dim_{\FF_2}\overline{\FF_2 \sX}
=\sum_{i=1}^r n_i^2\dim_{\FF_2}\FF_{2^{f_i}}
=\sum_{i=1}^r n_i^2 f_i.
\]
On the other hand,
\[
\dim_{\FF_2}\overline{\FF_2 \sX}
=
\dim_{\FF_2}\FF_2\sX - \dim_{\FF_2} \Rad({\FF_2 \sX})
=
10 - \dim_{\FF_2} \Rad({\FF_2 \sX}),
\]
because $\FF_2\sX$ has dimension $10$ over $\FF_2$ with basis
$\{\sigma_i\}_{i=1}^{10}$. Combining these equalities yields the claimed
identity.

\medskip
(iii)\;
Let $e:=\sigma_1\in {\FF_2 \sX}$ be the idempotent corresponding to the point
fiber $P$. Then $e$ acts as the identity on the fiber $P$ and
vanishes on the block fiber $B$,
and the subalgebra ${\FF_2 \sY}$ can be identified with the corner algebra
\[
{\FF_2 \sY} \;=\; e {\FF_2 \sX} e \;\subseteq\; {\FF_2 \sX}.
\]
For any finite-dimensional algebra $A$ over a field and any idempotent
$e\in A$, it is a standard fact that the Jacobson radical of the corner
algebra $eAe$ is given by
\[
\Rad(eAe) \;=\; e\,\Rad(A)\,e \;=\; \Rad(A)\cap eAe.
\]
Applying this to $A={\FF_2 \sX}$ and $e=\sigma_1$ yields
\[
\Rad({\FF_2} \sY)
\;=\;
\Rad(e {\FF_2 \sX}e)
\;=\;
e\,\Rad({\FF_2 \sX})\,e
\;=\;
\Rad({\FF_2 \sX})\cap {\FF_2 \sY}
\;\subseteq\;
\Rad({\FF_2 \sX}),
\]
and therefore
\[
\dim_{\FF_2}\Rad({\FF_2 \sY})
\;\le\;
\dim_{\FF_2} \Rad({\FF_2 \sX}),
\]
as claimed.
\end{proof}

The inequality in Proposition~\ref{prop:design-A2-structure}(iii) shows that
any non-semisimplicity on the point side in characteristic~$2$ is inherited
by the design algebra. In particular, any lower bound on
$\dim_{\FF_2}\Rad({\FF_2 \sY})$ immediately yields a lower bound on the dimension of
the Jacobson radical of the design algebra $\FF_2\sX$.

In Sections~\ref{sec:frame-pg} and~\ref{sec:prank} we determined the
$p$-ranks of the point graphs arising from partial geometries and, in
particular, the possible values of $\dim_{\FF_p}\Rad({\FF_p \sY})$ for $p=2$. In the
next section we apply these results to the generalized quadrangle
$\mathrm{GQ}(2,2)$ and combine them with
Proposition~\ref{prop:design-A2-structure} to obtain partial information
about the $2$-modular design algebra $\FF_2\sX$ associated with its coherent
configuration of type $[3,2;3]$, and to formulate an explicit open problem
concerning the structure of $\FF_2\sX$, including its semisimple quotient and Gabriel quiver.

\section{The generalized quadrangle \textrm{GQ}(2,2)}\label{sec:gq22}
In this section we specialize the general theory developed above to the smallest
nontrivial partial geometry, namely the generalized quadrangle $\mathrm{GQ}(2,2)$
(the ``doily''). We first recall its parameters as a partial geometry and as a strongly
regular graph, then compute the frame number of the associated point scheme and
describe the modular adjacency algebra. Finally, we state what is currently known
about the $2$-modular design algebra of the coherent configuration of type $[3,2;3]$
attached to $\mathrm{GQ}(2,2)$, and formulate an explicit problem.

\subsection{The geometry \texorpdfstring{$\mathrm{GQ}(2,2)$}{GQ(2,2)}}

A generalized quadrangle of order $(s,t)$ is a point–line incidence structure
$\mathcal{Q}=(P,B,\Fl)$ satisfying
\begin{enumerate}[(i)]
\item every line contains exactly $s+1$ points;
\item every point is contained in exactly $t+1$ lines;
\item any two distinct points lie on at most one common line;
\item for every point $x\in P$ and every line $Y\in B$ with $x\notin Y$, there exists
a unique pair $(y,Z)\in P\times B$ such that $x\in Z$, $y\in Y$ and $Z$ meets $Y$ in
the single point $y$.
\end{enumerate}
(See, e.g., \cite[Chapter~6]{BCN} or \cite{Bose,BrouwerHaemers,GodsilRoyle,HaemersPg}.)

It is well known that a generalized quadrangle of order $(s,t)$ is a partial geometry
$\mathrm{pg}(s,t,1)$, i.e., a partial geometry with parameters $(s,t,\alpha)$ in the
sense of Bose \cite{Bose} with $\alpha=1$. In particular, for $\mathrm{GQ}(2,2)$ we
have
\[
(s,t,\alpha)=(2,2,1).
\]
The standard counting formulas for a partial geometry (see Section~\ref{sec:frame-pg}
and \cite{Bose,HaemersPg}) give
\[
v:=|P|
=
\frac{(s+1)(st+\alpha)}{\alpha}
=
(2+1)(2\cdot 2+1)
=
15,
\]
and similarly $|B|=15$. Each line contains $s+1=3$ points and each point lies on
$t+1=3$ lines.

We regard $\mathcal{Q}$ as an incidence structure $(P,B,\Fl)$ in the sense of
Definition~\ref{def:incidence-structure}. By the general construction in
Section~\ref{sec:srd-cc}, $\mathcal{Q}$ is a strongly regular design in the sense of
Higman \cite{HigmanSRD}, and it yields a coherent configuration $\sX$ of type
$[3,2;3]$ on $X=P\sqcup B$.

For later use we record our eigenvalue notation. In the general strongly regular
case we denoted the nontrivial eigenvalues of the point graph by $r$ and $s'$
(Lemma~\ref{lem:eigs-pg}). For $\mathrm{GQ}(2,2)$ these specialize to $r=1$ and
$s'=-3$, and in this section we simply write $s=-3$ for the latter.

\begin{lemma}\label{lem:GQ-point-SRG}
Let $\Gamma_1$ be the point graph of $\mathrm{GQ}(2,2)$, i.e., the graph on vertex set
$P$ in which two distinct points are adjacent if and only if they lie on a common
line. Then $\Gamma_1$ is strongly regular with parameters
\[
(v,k,\lambda,\mu) = (15,6,1,3),
\]
and its eigenvalues and multiplicities are
\[
k=6\ (1\text{-fold}),\quad
r=1\ (9\text{-fold}),\quad
s=-3\ (5\text{-fold}).
\]
\end{lemma}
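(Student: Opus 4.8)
The plan is to obtain the lemma by specializing, to the parameters $(s,t,\alpha)=(2,2,1)$, the general description of the point graph of a partial geometry established in Section~\ref{sec:frame-pg}. First I would recall from \eqref{eq:pg-basic} and \eqref{eq:srg-pg} that the point graph of $\mathrm{pg}(s,t,\alpha)$ is strongly regular with
\[
v=\frac{(s+1)(st+\alpha)}{\alpha},\qquad
k=s(t+1),\qquad
\lambda=s-1+t(\alpha-1),\qquad
\mu=\alpha(t+1).
\]
Substituting $(s,t,\alpha)=(2,2,1)$ (so that $st+\alpha=5$) gives $v=3\cdot 5=15$, $k=2\cdot 3=6$, $\lambda=2-1+2\cdot 0=1$ and $\mu=1\cdot 3=3$, which is exactly the parameter set $(15,6,1,3)$ claimed. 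That $\Gamma_1$ is genuinely a strongly regular graph — and in particular connected, since $\mu>0$ — follows from Definition~\ref{def:srd}(2),(3) together with the construction of the point graph, or directly from the partial-geometry axioms.

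For the spectrum I would invoke Lemma~\ref{lem:eigs-pg}, which gives the nontrivial eigenvalues $r=s-\alpha$ and $s'=-(t+1)$; with $(s,t,\alpha)=(2,2,1)$ these become $r=1$ and $s'=-3$, so $A$ has the three distinct eigenvalues $6,1,-3$. For the multiplicities I would use the standard trace relations $1+f+g=v$ and $k+fr+gs'=0$ (equivalently Lemma~\ref{lem:mult-srg}, or the explicit closed forms \eqref{eq:f-pg}--\eqref{eq:g-pg}): here they read $1+f+g=15$ and $6+f-3g=0$, i.e.\ $f+g=14$ and $f-3g=-6$, whence $g=5$ and $f=9$. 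One can double-check consistency via the third trace relation $k^2+fr^2+gs'^2=vk$, which reads $36+9+45=90=15\cdot 6$.

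Since the whole argument is a direct substitution into identities proved earlier in the paper, I do not anticipate any real obstacle; the only points requiring care are the arithmetic (confirming that $v$ and the multiplicities $f,g$ come out as the asserted integers) and making explicit that the general setup of Section~\ref{sec:frame-pg} applies to $\mathrm{GQ}(2,2)$, which it does because a generalized quadrangle of order $(s,t)$ is by definition the partial geometry $\mathrm{pg}(s,t,1)$. As a sanity check one may also note that the resulting strongly regular graph with parameters $(15,6,1,3)$ is the well-known complement of the triangular graph $T(6)$, for which these eigenvalue data are classical.
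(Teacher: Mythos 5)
Your proposal is correct and follows essentially the same route as the paper: specialize the standard partial-geometry parameters \eqref{eq:pg-basic}--\eqref{eq:srg-pg} to $(s,t,\alpha)=(2,2,1)$ to get $(15,6,1,3)$, read off the eigenvalues $1,-3$ (via Lemma~\ref{lem:eigs-pg}, which is just the quadratic the paper re-derives in its proof), and solve the trace relations $f+g=14$, $6+f-3g=0$ for $f=9$, $g=5$. The extra consistency check with $k^2+fr^2+gs'^2=vk$ and the remark on connectedness are fine but not needed.
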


\begin{proof}
For a partial geometry $\mathrm{pg}(s,t,\alpha)$ the point graph is strongly regular
with parameters
\[
v
=
\frac{(s+1)(st+\alpha)}{\alpha},\quad
k = s(t+1),\quad
\lambda = (s-1)+t(\alpha-1),\quad
\mu = \alpha(t+1),
\]
see, for example, \cite[§1.8]{BCN} or \cite{Bose,HaemersPg}. Substituting
$(s,t,\alpha)=(2,2,1)$ gives $v=15$, $k=6$, $\lambda=1$ and $\mu=3$.

For a strongly regular graph with these parameters the nontrivial eigenvalues $r,s$ are
the roots of
\[
x^2 - (\lambda-\mu)x - (k-\mu)
=
x^2 + 2x - 3,
\]
so $r=1$ and $s=-3$. The multiplicities $f,g$ of $r,s$ are determined by
\[
1+f+g=v,\qquad
k + fr + gs = 0,
\]
see \cite{BCN,BrouwerHaemers,GodsilRoyle}. Substituting
$(v,k,r,s)=(15,6,1,-3)$ and solving
\[
f+g=14,\qquad 6 + f - 3g = 0
\]
yields $g=5$ and $f=9$.
\end{proof}

We denote by $\sY$ the rank-$3$ association scheme on $P$ determined by the distance
partition of $\Gamma_1$, so $\sY$ has adjacency matrices $A_0=I_P$, $A_1$ (the adjacency
matrix of $\Gamma_1$) and $A_2=J_P-I_P-A_1$.

\subsection{Frame number of the point scheme and modular adjacency algebras}

In Section~\ref{sec:frame-pg} we determined the frame number of the rank-$3$
association scheme arising from a partial geometry.

\begin{proposition}\label{prop:GQ-frame-AS}
Let $\sY$ be the association scheme on $P$ coming from the point graph of a partial
geometry $\mathrm{pg}(s,t,\alpha)$, and let $v=|P|$. Then the frame number of $\sY$ is
\[
F_{\mathrm{AS}}(\sY)
=
v^2\,(s+t+1-\alpha)^2.
\]
In particular, for $\mathrm{GQ}(2,2)=\mathrm{pg}(2,2,1)$ one has
\[
F_{\mathrm{AS}}(\sY)
=
15^2 \cdot 4^2
=
3600
=
2^4\cdot 3^2\cdot 5^2.
\]
\end{proposition}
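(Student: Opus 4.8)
The plan is to deduce this statement directly from Proposition~\ref{prop:frame-pg}, which was already established in Section~\ref{sec:frame-pg}, and then perform the elementary arithmetic specialization to $\mathrm{GQ}(2,2)$. Since Proposition~\ref{prop:frame-pg} gives the closed formula $F_{\mathrm{AS}}(\sY) = v^2(s+t+1-\alpha)^2$ for the rank~$3$ association scheme attached to the point graph of any partial geometry $\mathrm{pg}(s,t,\alpha)$, the first clause of the present proposition is nothing but a restatement of that result, so the proof reduces to citing it.

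For the second clause, I would proceed as follows. First, recall from Lemma~\ref{lem:GQ-point-SRG} (or directly from \eqref{eq:pg-basic} with $(s,t,\alpha)=(2,2,1)$) that $v = \frac{(s+1)(st+\alpha)}{\alpha} = 3\cdot 5 = 15$. Next, compute $s+t+1-\alpha = 2+2+1-1 = 4$. Substituting $v=15$ and $s+t+1-\alpha = 4$ into the formula of Proposition~\ref{prop:frame-pg} gives $F_{\mathrm{AS}}(\sY) = 15^2\cdot 4^2$. Finally, I would evaluate $15^2 \cdot 4^2 = 225\cdot 16 = 3600$ and factor it as $3600 = 2^4\cdot 3^2\cdot 5^2$, which makes the prime divisors $p\in\{2,3,5\}$ manifest.

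There is essentially no obstacle here: the entire proof is a one-line invocation of Proposition~\ref{prop:frame-pg} followed by a trivial substitution and an arithmetic factorization. The only thing worth being slightly careful about is making sure the parameter identification $\mathrm{GQ}(2,2) = \mathrm{pg}(2,2,1)$ is invoked correctly (a generalized quadrangle of order $(s,t)$ is a partial geometry with $\alpha=1$), and that the value of $v$ agrees with the one computed in Lemma~\ref{lem:GQ-point-SRG}; both of these have already been recorded in the preceding text, so no new work is required. I would therefore write the proof as a short paragraph: state that the general formula is Proposition~\ref{prop:frame-pg}, substitute $(s,t,\alpha)=(2,2,1)$ using \eqref{eq:pg-basic} to get $v=15$, note $s+t+1-\alpha=4$, and conclude $F_{\mathrm{AS}}(\sY)=15^2\cdot 4^2=3600=2^4\cdot 3^2\cdot 5^2$.
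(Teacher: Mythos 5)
Your proposal is correct and matches the paper's own proof, which likewise just cites the general formula established in Section~\ref{sec:frame-pg} (Proposition~\ref{prop:frame-pg}) and specializes to $(s,t,\alpha)=(2,2,1)$ with $v=15$ and $s+t+1-\alpha=4$ to get $3600=2^4\cdot 3^2\cdot 5^2$. No issues.
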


\begin{proof}
The general formula was proved in Section~\ref{sec:frame-pg} using Jacob's expression
for the frame number in terms of eigenmatrices and Krein parameters
\cite{JacobThesis}, together with the explicit eigenvalues and multiplicities of the
point graph of $\mathrm{pg}(s,t,\alpha)$ (see Lemma~\ref{lem:GQ-point-SRG} and
\cite{Bose,BCN,HaemersPg}). Specializing to $(s,t,\alpha)=(2,2,1)$ yields the stated
value.
\end{proof}

Let $K$ be a field of characteristic $p\ge 0$ and write
\[
K\sY := K\otimes_{\ZZ} \mathbb{Z}\sY
\]
for the adjacency algebra of the scheme $\sY$ over $K$.
By the results of Hanaki~\cite{HanakiFrame,HanakiModularRep} and
Sharafdini~\cite{Sharafdini}, the semisimplicity of $K\sY$ is governed by
the frame number.

\begin{corollary}\label{cor:GQ-AS-semisimple}
Let $p$ be a prime and let $K$ be a field of characteristic $p$. Then:
\begin{enumerate}[\rm(i)]
\item If $p\notin\{2,3,5\}$, then $K\sY$ is split semisimple.
\item If $p\in\{2,3,5\}$, then $K\sY$ is not semisimple.
\end{enumerate}
\end{corollary}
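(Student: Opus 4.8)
The plan is to apply the general semisimplicity criterion of Theorem~\ref{thm:frame-semismpl} together with the explicit frame number computed in Proposition~\ref{prop:GQ-frame-AS}. Recall that for the association scheme $\sY$ attached to $\mathrm{GQ}(2,2)=\mathrm{pg}(2,2,1)$ we have
\[
F_{\mathrm{AS}}(\sY)=15^2\cdot 4^2=3600=2^4\cdot 3^2\cdot 5^2,
\]
so the set of prime divisors of $F_{\mathrm{AS}}(\sY)$ is exactly $\{2,3,5\}$. By Theorem~\ref{thm:frame-semismpl} (the Hanaki--Jacob--Sharafdini criterion, in its commutative form), the adjacency algebra $K\sY$ is semisimple if and only if $\operatorname{char}K$ does not divide $F_{\mathrm{AS}}(\sY)$. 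Since semisimplicity of $K\sY$ depends only on $\operatorname{char}K = p$ via the isomorphism $K\sY\cong K\otimes_{\FF_p}\FF_p\sY$, this immediately gives the dichotomy: $K\sY$ is semisimple precisely when $p\notin\{2,3,5\}$, which proves (i) up to the word ``split'', and proves (ii) directly.

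For part (i) it remains to upgrade ``semisimple'' to ``split semisimple''. Here I would note that $\CC\sY$ is split semisimple of dimension $3$ and decomposes as $\CC\oplus\CC\oplus\CC$, the three components corresponding to the eigenvalues $k=6$, $r=1$, $s=-3$ of $A_1$, with associated primitive idempotents that are polynomials in $A_1$ with rational coefficients whose denominators involve only $v$ and the eigenvalue differences. Concretely, for $p\notin\{2,3,5\}$ the scalars $v=15$ and the pairwise differences $k-r=5$, $k-s=9$, $r-s=4$ are all invertible modulo $p$, so the usual Lagrange-interpolation idempotents for $A_1$ (for instance $\tfrac1v J$ for the trivial component, and the analogous expressions for the $r$- and $s$-components) make sense over $\FF_p$. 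Thus $\FF_p\sY$ contains three orthogonal idempotents summing to the identity, each spanning a one-dimensional ideal isomorphic to $\FF_p$, so $\FF_p\sY\cong\FF_p\oplus\FF_p\oplus\FF_p$ is split; and the same holds over any field $K$ of characteristic $p$ by base change.

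There is essentially no hard step: the argument is a direct specialization of results already established in Section~\ref{sec:frame-pg}. The only point requiring a moment's care is the ``split'' assertion in (i), where one must check that the rational idempotents of $\CC\sY$ remain well-defined modulo $p$ for all $p\notin\{2,3,5\}$; this follows because their denominators divide a power of $vv(k-r)(k-s)(r-s)=15\cdot 5\cdot 9\cdot 4$, whose prime divisors are exactly $2,3,5$. Alternatively, one can avoid idempotents entirely and argue that any commutative semisimple $\FF_p$-algebra of dimension $3$ generated by a single element $A_1$ whose characteristic polynomial $(x-6)(x-1)(x+3)$ splits into distinct linear factors over $\FF_p$ (true for $p\notin\{2,3,5\}$) is necessarily $\FF_p\oplus\FF_p\oplus\FF_p$. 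Either route closes the proof.
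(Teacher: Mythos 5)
Your argument is essentially the paper's own proof: both deduce (i) and (ii) from the frame-number criterion (Theorem~\ref{thm:frame-semismpl}) applied to $F_{\mathrm{AS}}(\sY)=3600=2^4\cdot 3^2\cdot 5^2$, whose prime divisors are exactly $2,3,5$. Your additional verification of the word ``split'' in (i) — noting that the integer eigenvalues $6,1,-3$ stay pairwise distinct modulo $p\notin\{2,3,5\}$, so $K\sY\cong K\times K\times K$ — is correct and in fact supplies a detail the paper's proof leaves implicit.
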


\begin{proof}
By Proposition~\ref{prop:GQ-frame-AS}, the prime divisors of $F_{\mathrm{AS}}(\sY)$ are
exactly $2,3,5$. Jacob's and Sharafdini's results
\cite{JacobThesis,HanakiFrame,Sharafdini} imply that $K\sY$ is semisimple if and only
if $\operatorname{char}K$ does not divide the frame number.
\end{proof}

In Section~\ref{sec:frame-pg} we refined this criterion for partial geometries by
describing the Jacobson radical of $K\sY$ in terms of the parameters $v$ and
$s+t+1-\alpha$. For $\mathrm{pg}(2,2,1)$ there are three relevant primes: $p=2$, which
divides $s+t+1-\alpha=4$ but not $v=15$, and $p=3,5$, which divide $v$ but not
$s+t+1-\alpha$.

\begin{proposition}\label{prop:GQ-AS-radical}
Let $K$ be a field of characteristic $p\in\{2,3,5\}$, and let $A_1$ be the adjacency
matrix of the point graph of $\mathrm{GQ}(2,2)$.
\begin{enumerate}[\rm(i)]
\item If $p \in\{3,5\}$, then $p\mid v$ and $p\nmid (s+t+1-\alpha)$. The Jacobson
radical of $K\sY$ is one-dimensional,
\[
\Rad(K\sY)=K\cdot J_P,\qquad J_P^2=0,
\]
and the quotient is split semisimple:
\[
K\sY / \Rad(K\sY) \;\cong\; K\times K.
\]
\item If $p=2$, then $p\nmid v$ and $p\mid(s+t+1-\alpha)$. Let
\[
B := (A_1 - kI_P)(A_1 - r I_P) \in \Mat_P(K),
\]
where $k=6$ and $r=1$ are as in Lemma~\ref{lem:GQ-point-SRG}, and regard $B$ as an
element of $K\sY$. Then
\[
\Rad(K\sY)=K\cdot B,\qquad B^2=0,
\]
and again $K\sY / \Rad(K\sY) \cong K\times K$.
\end{enumerate}
\end{proposition}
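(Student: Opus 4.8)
The plan is to deduce Proposition~\ref{prop:GQ-AS-radical} from the general classification of Theorem~\ref{thm:radical-pg}, applied to the partial geometry $\mathrm{pg}(2,2,1)$, supplemented by two short explicit verifications. The first step is to assemble the numerical data: for $(s,t,\alpha)=(2,2,1)$ one has $v=15$ and $s+t+1-\alpha=4$, while by Lemma~\ref{lem:GQ-point-SRG} the point graph has eigenvalues $k=6$, $r=1$, $s'=-3$. From this it is immediate that $3$ and $5$ divide $v$ but not $s+t+1-\alpha$, so for $p\in\{3,5\}$ we are in case~(V) of Theorem~\ref{thm:radical-pg}, whereas $p=2$ divides $s+t+1-\alpha$ but not $v$, putting $p=2$ in case~(R). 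Thus the dimension of $\Rad(K\sY)$ and a generator of it are read off directly from Theorem~\ref{thm:radical-pg}(ii),(iii), and what remains is to record the sharp nilpotency relations and to identify the semisimple quotient.

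For part~(i), with $p\in\{3,5\}$, Theorem~\ref{thm:radical-pg}(ii) gives $\Rad(K\sY)=K\cdot J_P$ with $\dim_K\Rad(K\sY)=1$. I would then note that $J_P^2=v\,J_P=15\,J_P=0$ in $K$ because $p\mid 15$, which establishes the relation $J_P^2=0$. For the quotient I would follow the argument in the proof of case~(V): $K\sY/K\cdot J_P$ is a $2$-dimensional commutative semisimple algebra generated by the image of $A_1$, and since the eigenvalues $k,r,s'$ of $A_1$ are integers (hence lie in $\FF_p$) and $r\not\equiv s'\pmod p$, the image of $A_1$ satisfies a polynomial splitting into distinct linear factors over $K$; therefore $K\sY/\Rad(K\sY)\cong K\times K$.

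For part~(ii), with $p=2$, Theorem~\ref{thm:radical-pg}(iii) gives $\Rad(K\sY)=K\cdot B$ with $B=(A_1-6I_P)(A_1-I_P)$ and $\dim_K\Rad(K\sY)=1$. To obtain $B^2=0$ I would use that, since $6\equiv 0\pmod 2$, one has $B=A_1(A_1-I_P)$ in $K\sY$, while the proof of case~(R) (here $r\equiv s'\equiv 1\pmod 2$, so $\rho=1$, and the exponent $e=2$ is forced by non-semisimplicity) shows that the minimal polynomial of $A_1$ over $\FF_2$ is $x(x-1)^2$; multiplying the relation $A_1(A_1-I_P)^2=0$ on the left by $A_1$ then yields $B^2=A_1^2(A_1-I_P)^2=0$. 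For the quotient I would use that $2\nmid v=15$, so the trivial idempotent $E_0=v^{-1}J_P$ is defined over $\FF_2$; hence $K\sY/K\cdot B$ has exactly two $1$-dimensional simple components, attached to the eigenvalues $\overline{k}=0$ and $\rho=1$ of $A_1$, both lying in $\FF_2$, and so $K\sY/\Rad(K\sY)\cong K\times K$ again.

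I do not expect a genuine obstacle here: the proposition is essentially a bookkeeping specialisation of Theorem~\ref{thm:radical-pg}. The two points that deserve care are (a) upgrading the conclusion ``$\Rad(K\sY)$ is a one-dimensional nilpotent ideal'' to the precise relations $J_P^2=0$ and $B^2=0$, which reduces in each case to a single line using $p\mid v$, respectively the explicit minimal polynomial $x(x-1)^2$; and (b) checking that the two residual simple components are defined over the prime field and not over a quadratic extension of $\FF_p$, which holds because $k,r,s'$ are rational integers whose reductions modulo $p$ remain in $\FF_p$ for each $p\in\{2,3,5\}$. If one preferred a wholly self-contained treatment one could instead verify the minimal polynomials of $A_1$ modulo $2$, $3$ and $5$ by a direct $15\times 15$ computation, but this is unnecessary once Theorem~\ref{thm:radical-pg} is in hand.
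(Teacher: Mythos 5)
Your proposal is correct and follows essentially the same route as the paper: the paper's proof likewise just checks $v=15$, $s+t+1-\alpha=4$, places $p\in\{3,5\}$ in case (V) and $p=2$ in case (R) of Theorem~\ref{thm:radical-pg}, and reads off the radical and the quotient $K\times K$. Your extra verifications ($J_P^2=vJ_P=0$, $B^2=0$ via the minimal polynomial $x(x-1)^2$, and the rationality of the residual eigenvalues ensuring splitness over $\FF_p$) are sound refinements of details the paper leaves implicit, not a different argument.
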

\begin{proof}
By Lemma~\ref{lem:GQ-point-SRG} and Proposition~\ref{prop:GQ-frame-AS}, the point
graph of $\mathrm{GQ}(2,2)=\mathrm{pg}(2,2,1)$ has parameters
\[
v=15,\qquad k=6,\qquad r=1,\qquad s'=-3,\qquad s+t+1-\alpha=4.
\]
Thus for $p\in\{3,5\}$ we have $p\mid v$ and $p\nmid(s+t+1-\alpha)$, so we are in
case~\textup{(V)} of Theorem~\ref{thm:radical-pg}, which gives
\[
\Rad(K\sY)=K\cdot J_P,\qquad J_P^2=vJ_P\equiv 0\pmod p,
\]
and $K\sY/\Rad(K\sY)\cong K\times K$, proving~\textup{(i)}.

For $p=2$ we have $p\nmid v$ and $p\mid(s+t+1-\alpha)$, so we are in
case~\textup{(R)} of Theorem~\ref{thm:radical-pg}, which yields
\[
\Rad(K\sY)=K\cdot B,\qquad B:=(A_1-kI_P)(A_1-rI_P),\qquad B^2=0,
\]
and again $K\sY/\Rad(K\sY)\cong K\times K$, proving~\textup{(ii)}.
\end{proof}

For later reference we record the $2$-rank of the adjacency matrix of the point graph.

\begin{proposition}\label{prop:GQ-2-rank}
Let $A_1$ be the adjacency matrix of the point graph of $\mathrm{GQ}(2,2)$. Over
$\FF_2$ one has
\[
\rank_{\FF_2}(A_1) = 14.
\]
Equivalently, $\ker(A_1:\FF_2^{15}\to \FF_2^{15})$ is one-dimensional, spanned by the
all-one vector.
\end{proposition}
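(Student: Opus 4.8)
The plan is to compute $\ker(A_1)$ over $\FF_2$ by hand, because the generic machinery of Proposition~\ref{prop:rank3-prank} is unavailable here: reducing modulo $2$, the nontrivial eigenvalues $r=1$ and $s'=-3$ become equal, i.e. $2\mid(r-s')$, so $A_1$ need not be diagonalizable over $\overline{\FF}_2$ and we are squarely in case~\textup{(R)} of Theorem~\ref{thm:radical-pg}. Instead of an eigenspace argument I would therefore exploit the single quadratic relation satisfied by $A_1$.

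First I would write down the Bose--Mesner identity $A_1^2 = kI_P + \lambda A_1 + \mu(J_P - I_P - A_1)$, valid over $\ZZ$, insert the parameters $(k,\lambda,\mu)=(6,1,3)$ from Lemma~\ref{lem:GQ-point-SRG}, and reduce modulo $2$; since $6\equiv 0$, $3\equiv 1$, and the coefficient $\lambda-\mu=-2$ of $A_1$ vanishes mod $2$, this collapses to the clean identity $A_1^2 = I_P + J_P$ in $\Mat_P(\FF_2)$. Next, given any $v\in\FF_2^{15}$ with $A_1 v = 0$, I would apply $A_1$ once more and use this identity to get $0 = A_1^2 v = v + J_P v$, hence $J_P v = v$; since $J_P v$ is always a scalar multiple of the all-one vector $\mathbf{1}_P$, this forces $v\in\FF_2\cdot\mathbf{1}_P$, so $\ker(A_1)\subseteq\FF_2\cdot\mathbf{1}_P$. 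The reverse inclusion is immediate: $A_1\mathbf{1}_P = k\,\mathbf{1}_P = 6\,\mathbf{1}_P = 0$ over $\FF_2$. Thus $\ker(A_1)=\FF_2\cdot\mathbf{1}_P$ is one-dimensional, and $\rank_{\FF_2}(A_1)=15-1=14$.

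As a sanity check I would note that this agrees with the structural data: $\chi_{A_1}(x)=(x-6)(x-1)^9(x+3)^5$ reduces mod $2$ to $x(x+1)^{14}$, while Theorem~\ref{thm:radical-pg}(iii) gives minimal polynomial $x(x+1)^2$ over $\FF_2$, so the eigenvalue $0$ has algebraic and geometric multiplicity both equal to $1$. There is no serious obstacle in this argument; the only point requiring care is precisely that one must not appeal to diagonalizability at $p=2$ and should instead use the explicit relation $A_1^2 = I_P + J_P$, after which the kernel computation is forced.
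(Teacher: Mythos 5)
Your argument is correct, and it takes a genuinely different route from the paper. The paper's proof works with the complement of the point graph, the triangular graph $T(6)$: it reduces the characteristic polynomial $(x-8)(x-2)^5(x+2)^9$ of $A_T=J_P-I_P-A_1$ modulo $2$ to conclude $\overline{A}_T$ is nilpotent, writes $\overline{A}_1=J_P+I_P+\overline{A}_T$, splits $\FF_2^{15}=\langle\mathbf{1}_P\rangle\oplus V_0$ with $V_0$ the zero-sum hyperplane, and uses invertibility of $\overline{A}_T+I_P$ on $V_0$. You instead reduce the Bose--Mesner relation $A_1^2=kI_P+\lambda A_1+\mu(J_P-I_P-A_1)$ with $(k,\lambda,\mu)=(6,1,3)$ modulo $2$ to get $A_1^2=I_P+J_P$, so any kernel vector $v$ satisfies $J_Pv=v$ and hence lies in $\FF_2\cdot\mathbf{1}_P$, while $A_1\mathbf{1}_P=6\,\mathbf{1}_P=0$ gives the reverse inclusion; this is shorter, avoids any appeal to the spectrum of $T(6)$ or to an invariant splitting, and correctly sidesteps the diagonalizability issue at $p=2$ (case (R)), where Proposition~\ref{prop:rank3-prank} does not apply. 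Your closing sanity check is also sound and could even stand alone: since the characteristic polynomial reduces to $x(x+1)^{14}$ over $\FF_2$, the algebraic multiplicity of the eigenvalue $0$ is $1$, so the geometric multiplicity is at most $1$, and $\mathbf{1}_P$ realizes it. Both routes yield the same conclusion; yours buys brevity and self-containedness, the paper's makes the role of the complement graph $T(6)$ and its $2$-modular nilpotency explicit.
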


\begin{proof}
Let $\overline{A}_1$ denote the reduction of $A_1$ modulo $2$. Since $\Gamma_1$ is
$6$-regular, $\overline{A}_1\mathbf{1}_P=0$, so the kernel has dimension at least one.

The complement of $\Gamma_1$ is the triangular graph $T(6)$, whose adjacency matrix
$A_T$ satisfies
\[
A_T = J_P - I_P - A_1.
\]
The eigenvalues of $T(6)$ over $\CC$ are $k_T=8$, $r_T=2$, $s_T=-2$ with multiplicities
$1,5,9$, see \cite[§9.1]{BCN}. Hence its characteristic polynomial is
\[
\det(xI - A_T) = (x-8)(x-2)^5(x+2)^9.
\]
Reducing this modulo $2$ gives
\[
\det(xI - \overline{A}_T) \equiv x^{15},
\]
so $\overline{A}_T$ is nilpotent and $0$ is its only eigenvalue over any extension of
$\FF_2$.

Over $\FF_2$ we have
\[
\overline{A}_1 = J_P + I_P + \overline{A}_T.
\]
Decompose $\FF_2^{15}=\langle \mathbf{1}_P\rangle \oplus V_0$, where
$V_0:=\{v\in\FF_2^{15}\mid \sum_{x\in P} v_x = 0\}$. On $V_0$ we have $J_P=0$, so
\[
(\overline{A}_1 v = 0) \quad\Longleftrightarrow\quad
(\overline{A}_T+I_P)v = 0,\qquad v\in V_0.
\]
The eigenvalues of $\overline{A}_T+I_P$ are $1-\lambda$ as $\lambda$ runs through the
eigenvalues of $\overline{A}_T$. Since $0$ is the only eigenvalue of
$\overline{A}_T$, $1$ is the only eigenvalue of $\overline{A}_T+I_P$, and in
particular $0$ is not an eigenvalue. Thus $\overline{A}_T+I_P$ is invertible and
$\ker(\overline{A}_T+I_P)=0$.

It follows that the only kernel vector of $\overline{A}_1$ is a scalar multiple of
$\mathbf{1}_P$, so $\dim_{\FF_2}\ker\overline{A}_1=1$ and
$\rank_{\FF_2}(\overline{A}_1)=15-1=14$.
\end{proof}

\subsection{The 2-modular design algebra and an open problem}

Let $\sX=(X,\{R_i\}_{i=1}^{10})$ be the coherent configuration of type $[3,2;3]$
constructed in Proposition~\ref{prop:cc-3-2-3} from the strongly regular design
$(P,B,\Fl)$ attached to $\mathrm{GQ}(2,2)$, and let
\[
\CC\sX
:=
\Span_{\CC}\{\sigma_1,\dots,\sigma_{10}\}\subseteq \Mat_X(\CC)
\]
be its complex adjacency algebra. In Section~\ref{sec:design-alg-2} we determined its
Wedderburn decomposition:
\[
\CC\sX
\;\cong\; \CC\ \oplus\ \CC\ \oplus\ M_2(\CC)\ \oplus\ M_2(\CC),
\]
with two $1$-dimensional simple components coming from the trivial representations on
$P$ and $B$, and two $2\times 2$ matrix components corresponding to the nontrivial
parts of the point and block schemes.

For a field $K$ of characteristic $p\ge 0$ we write
\[
K\sX := K\otimes_{\ZZ} \mathbb{Z}\sX
\]
for the adjacency algebra of $\sX$ over $K$. Since $K\sY$ is a subalgebra of
$K\sX$, any failure of semisimplicity for $K\sY$ forces a failure of semisimplicity for
$K\sX$.

\begin{proposition}\label{prop:GQ-CC-nonSS}
Let $K$ be a field of characteristic $p$.
\begin{enumerate}[\rm(i)]
\item If $p\notin\{2,3,5\}$, then $K\sX$ is split semisimple.
\item If $p\in\{2,3,5\}$, then $K\sX$ is not semisimple.
\end{enumerate}
\end{proposition}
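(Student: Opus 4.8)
The plan is to treat the two parts by different routes. Part~(ii) is obtained by transporting non-semisimplicity from the point scheme $\sY$ along a corner-algebra embedding, exactly as in Proposition~\ref{prop:design-A2-structure}(iii). Part~(i) is obtained from Sharafdini's frame-number criterion (Theorem~\ref{thm:Sharafdini}), once one checks that the frame number $F_{\mathrm{CC}}(\sX)$ is $\{2,3,5\}$-smooth.

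For part~(ii): set $e:=\sigma_1\in K\sX$. One first checks directly from the block shapes of the $\sigma_i$ (as listed in Proposition~\ref{prop:cc-3-2-3}) that $e$ is an idempotent with $e\sigma_i e=\sigma_i$ for $i\in\{1,2,3\}$ and $e\sigma_i e=0$ for $i\in\{4,\dots,10\}$, so that $K\sY=\Span_K\{\sigma_1,\sigma_2,\sigma_3\}=e\,K\sX\,e$ is the corner algebra at $e$, the same identification used in Proposition~\ref{prop:design-A2-structure}(iii). The standard fact $\Rad(eAe)=e\,\Rad(A)\,e\subseteq\Rad(A)$ for a finite-dimensional algebra $A$ and an idempotent $e$ then gives $\Rad(K\sY)=e\,\Rad(K\sX)\,e$, so $\Rad(K\sY)\neq 0$ forces $\Rad(K\sX)\neq 0$. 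For $p\in\{2,3,5\}$, Proposition~\ref{prop:GQ-AS-radical} (equivalently Corollary~\ref{cor:GQ-AS-semisimple}(ii)) exhibits a nonzero element of $\Rad(K\sY)$ — the matrix $J_P$ when $p\in\{3,5\}$, and $B=(A_1-6I_P)(A_1-I_P)$ when $p=2$ — so $\Rad(K\sX)\neq 0$ and $K\sX$ is not semisimple. This part is routine.

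For part~(i): by Theorem~\ref{thm:Sharafdini} it suffices to show that every prime divisor of $F_{\mathrm{CC}}(\sX)$ lies in $\{2,3,5\}$. In Sharafdini's product formula \cite[Definition~5.1]{Sharafdini}, $F_{\mathrm{CC}}(\sX)$ is a positive integer whose numerator is a monomial in the fibre sizes $|P|=|B|=15$ (equivalently $|X|=30$) and the valencies of the ten relations $R_i$, which for $\mathrm{GQ}(2,2)$ are $1,6,8,1,6,8,3,12,3,12$, all $\{2,3,5\}$-smooth; the multiplicities and degrees appearing in the denominator are $\{3,5\}$- and $2$-smooth respectively. Since the integer $F_{\mathrm{CC}}(\sX)$ divides this numerator, it is $\{2,3,5\}$-smooth, hence $p\nmid F_{\mathrm{CC}}(\sX)$ for $p\notin\{2,3,5\}$ and $K\sX$ is semisimple. (Alternatively one can verify this by a direct computation: diagonalise $A_1,A_2$ using $NN^{\top}=3I_P+A_1$ and $N^{\top}N=3I_B+A_2$ to write down the primitive central idempotents of $\CC\sX$ with denominators dividing $2\cdot 3\cdot 5$.) For the word \emph{split}, the rationality of all eigenvalues ($k=6$, $r=1$, $s'=-3$ on both fibres) shows that $\QQ\sX\cong\QQ\oplus\QQ\oplus M_2(\QQ)\oplus M_2(\QQ)$ is already split over $\QQ$, as in Proposition~\ref{prop:CX-Wedderburn}; the primitive central idempotents and matrix units then have entries in $\ZZ[1/F_{\mathrm{CC}}(\sX)]$ and reduce modulo $p$ (for $p\nmid F_{\mathrm{CC}}(\sX)$) to yield $K\sX\cong K\oplus K\oplus M_2(K)\oplus M_2(K)$.

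I expect the only real obstacle to be in part~(i): in the absence of a closed-form expression for $F_{\mathrm{CC}}(\sX)$, one must argue from the shape of Sharafdini's formula — that its numerator involves only the order and the valencies of $\sX$ — to conclude $\{2,3,5\}$-smoothness; and the split-ness assertion, although evident from semisimplicity in the good-prime range, has to be justified through the $p$-integrality of the idempotents (equivalently, triviality of the decomposition map at primes not dividing the frame number). Part~(ii) presents no difficulty.
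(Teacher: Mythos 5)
Your proposal is correct in substance, and its overall skeleton (Sharafdini's criterion plus transfer of non-semisimplicity from the point scheme $\sY$) is the paper's, but the two halves are handled somewhat differently. For (ii) you do exactly what the paper does, except that you make the transfer rigorous: the paper merely says that $K\sY$ embeds into $K\sX$ and that non-semisimplicity is inherited, which as stated is not a property of arbitrary subalgebras; your identification $K\sY=e\,K\sX\,e$ with $e=\sigma_1$ and the identity $\Rad(eAe)=e\Rad(A)e$ (which the paper records only for $\FF_2$ in Proposition~\ref{prop:design-A2-structure}(iii)) is the correct way to justify it for all $p\in\{2,3,5\}$, and your radical elements from Proposition~\ref{prop:GQ-AS-radical} match the paper's input. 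For (i) your primary route differs: the paper does not estimate $F_{\mathrm{CC}}(\sX)$ at all, but argues directly that for $p\notin\{2,3,5\}$ the eigenvalue data and the reduction of the primitive idempotents give $K\sX$ split semisimple of the same Wedderburn type as $\CC\sX$ (essentially your parenthetical alternative), whereas you deduce $p\nmid F_{\mathrm{CC}}(\sX)$ from $\{2,3,5\}$-smoothness of the numerator in Sharafdini's product formula. The one step that is not self-contained is precisely the structural claim about \cite[Definition~5.1]{Sharafdini}: neither you nor the paper reproduces the formula, so your assertion that its numerator is a monomial in the fibre sizes and valencies (the paper describes it as a product formula ``in terms of the intersection numbers and the data $(n_\chi,m_\chi)$'') is an extrapolation from the association-scheme case; it is consistent with the known Frame quotient and harmless here, but as written it should either be verified against Sharafdini's definition or replaced by your explicit-idempotent argument, which is in effect what the paper does. (Minor point: the idempotent denominators are $\{2,3,5\}$-smooth rather than dividing $2\cdot3\cdot5$ on the nose, e.g.\ $E_r$ for the point scheme has a factor $4$ in its denominator; this does not affect the conclusion for $p\notin\{2,3,5\}$.)
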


\begin{proof}
By Theorem~\ref{thm:Sharafdini}, the frame number $F_{\mathrm{CC}}(\sX)$
of the coherent configuration governs the semisimplicity of $K\sX$: the algebra $K\sX$
is semisimple if and only if $\operatorname{char}K$ does not divide
$F_{\mathrm{CC}}(\sX)$.

The algebra $K\sY$ embeds into $K\sX$ via the block-diagonal inclusion
$A_1\mapsto\sigma_2$ on the point fiber. If $K\sY$ is not semisimple, then neither is
$K\sX$, and therefore $\operatorname{char}K$ divides $F_{\mathrm{CC}}(\sX)$.
Corollary~\ref{cor:GQ-AS-semisimple} shows that this happens exactly for
$p\in\{2,3,5\}$.

Conversely, for $p\notin\{2,3,5\}$ the algebra $K\sY$ is split semisimple and
$\operatorname{char}K$ does not divide $F_{\mathrm{AS}}(\sY)$
(Proposition~\ref{prop:GQ-frame-AS}). By Section~\ref{sec:design-alg-2}, in this case
the eigenvalue data of the homogeneous components and the mixed relations show that
$K\sX$ is also split semisimple and has the same Wedderburn type as $\CC\sX$.
\end{proof}

The primes $p=3,5$ already exhibit nontrivial modular phenomena at the level of the
point scheme $\sY$ (Proposition~\ref{prop:GQ-AS-radical}(i)), but in this paper we focus
on the case $p=2$, where the coherent configuration of type $[3,2;3]$ shows the
smallest nontrivial interaction between the two fibers.

Let $\Rad(\FF_2\sX)$ denote its Jacobson radical. By
Proposition~\ref{prop:GQ-AS-radical}(ii), the subalgebra $\FF_2\sY$ has a
one-dimensional radical $\Rad(\FF_2\sY)=\FF_2\cdot B$ with $B^2=0$, and
$\Rad(\FF_2\sY)\subseteq \Rad(\FF_2\sX)$. 
On the other hand, Section~5 shows that for a coherent configuration
of type $[3,2;3]$ the semisimple quotient $\FF_2\sX/\Rad(\FF_2\sX)$ has
at most four simple components, and
\[
   \dim_{\FF_2}(\FF_2\sX/\Rad(\FF_2\sX)) \le 10.
\]

% On the other hand, Section~\ref{sec:design-alg-2}
% shows that for a coherent configuration of type $[3,2;3]$ the radical of $\FF_2\sX$ has
% codimension at least $4$ and dimension at most $2$.

Combining these facts with the explicit structure of $\sX$ for
$\mathrm{GQ}(2,2)$, one obtains the following partial information.

\begin{proposition}\label{prop:GQ-A2-partial}
%\begin{proposition}\label{prop:GQ22-design-radical}
Let $X$ be the coherent configuration of type $[3,2;3]$ arising from $\mathrm{GQ}(2,2)$.
Then the following hold.
\begin{enumerate}[\rm(i)]
\item $\Rad(\FF_2\sY)$ is one-dimensional and is contained in $\Rad(\FF_2\sX)$.
\item $\Rad(\FF_2\sX)$ is nonzero and strictly larger than $\Rad(\FF_2\sY)$;
more precisely,
    \[ \text{$\dim_{\FF_2}\Rad(\FF_2\sY)=1$ and
      $\dim_{\FF_2}\Rad(\FF_2\sX)\ge 2$.}\]
\item In particular, $2$ divides the frame number $F_{\mathrm{CC}}(\sX)$.
\end{enumerate}
\end{proposition}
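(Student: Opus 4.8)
The plan is to establish parts (i)--(iii) of Proposition~\ref{prop:GQ-A2-partial} in order, using the general results of Sections~\ref{sec:frame-pg}--\ref{sec:design-alg-2} together with a concrete computation with the block matrices $\sigma_1,\dots,\sigma_{10}$ for $\mathrm{GQ}(2,2)$. First, part~(i) is immediate: by Lemma~\ref{lem:GQ-point-SRG} and Proposition~\ref{prop:GQ-AS-radical}(ii) we are in case~\textup{(R)} of Theorem~\ref{thm:radical-pg}, so $\Rad(\FF_2\sY)=\FF_2\cdot B$ with $B=(A_1-6I_P)(A_1-I_P)=(A_1+1)(A_1)$ (reducing mod $2$) is one-dimensional, and Proposition~\ref{prop:design-A2-structure}(iii) gives $\Rad(\FF_2\sY)\subseteq\Rad(\FF_2\sX)$ via the corner-algebra identification $\FF_2\sY=\sigma_1(\FF_2\sX)\sigma_1$.

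For part~(ii) the first half (non-vanishing of $\Rad(\FF_2\sX)$) already follows from~(i), since $\Rad(\FF_2\sY)\neq 0$. The substantive claim is the strict inequality $\dim_{\FF_2}\Rad(\FF_2\sX)\ge 2$, equivalently that $\Rad(\FF_2\sY)$ is a proper subspace of $\Rad(\FF_2\sX)$. The cleanest route is to exhibit an explicit element of $\Rad(\FF_2\sX)$ lying outside $\FF_2\cdot B\subseteq\FF_2\sY$. A natural candidate is built from the mixed relations: consider the all-ones block $\sigma_7+\sigma_8=\begin{pmatrix}0&J_{P,B}\\0&0\end{pmatrix}$ and similarly $\sigma_9+\sigma_{10}$; products such as $(\sigma_7+\sigma_8)(\sigma_9+\sigma_{10})=\begin{pmatrix}J_{P,B}J_{P,B}^{\top}&0\\0&0\end{pmatrix}=\begin{pmatrix}15\,J_P&0\\0&0\end{pmatrix}\equiv\begin{pmatrix}J_P&0\\0&0\end{pmatrix}\pmod 2$ already show that $J_P$ (i.e.\ $\sigma_1+\sigma_2+\sigma_3$) is not itself radical since $15\not\equiv 0$. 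A better candidate uses the incidence matrix $N$ itself: since $NN^{\top}=6I_P+3A_1+\mu(J_P-I_P-A_1)$ with $\mu=3$, reducing mod $2$ gives $NN^{\top}\equiv A_1+J_P+I_P\pmod 2$, and one computes that the element $\sigma_7\sigma_9=\begin{pmatrix}NN^{\top}&0\\0&0\end{pmatrix}$ equals $\sigma_1+\sigma_2+\sigma_1+\sigma_2+\sigma_3=\sigma_3$ modulo $2$ --- this identifies concrete relations among the $\sigma_i$ in characteristic $2$. The hard part will be pinning down a specific nilpotent element of $\FF_2\sX$ that is provably not a polynomial in $\sigma_1,\sigma_2,\sigma_3$ alone; I would do this by passing to the standard module $V=\FF_2^{15}\oplus\FF_2^{15}$, using the $2$-rank computation of Proposition~\ref{prop:GQ-2-rank} together with an analogous $2$-rank statement for $N$ (the incidence matrix of $\mathrm{GQ}(2,2)$, whose $2$-rank is classically known to be $15-\binom{?}{?}$; concretely one uses that $N$ has $2$-rank $6$ or similar, making $\ker_{\FF_2}N$ large), and arguing that the discrepancy between $\dim\ker N$ and the expected semisimple dimension forces an extra radical direction beyond $B$.

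Part~(iii) is then a one-line deduction: by Theorem~\ref{thm:Sharafdini}, $\FF_2\sX$ is semisimple if and only if $2\nmid F_{\mathrm{CC}}(\sX)$; since part~(ii) shows $\Rad(\FF_2\sX)\neq 0$, the algebra $\FF_2\sX$ is not semisimple, hence $2\mid F_{\mathrm{CC}}(\sX)$. I anticipate that the genuine obstacle is entirely in the strict inequality of part~(ii): establishing $\dim_{\FF_2}\Rad(\FF_2\sX)\ge 1$ is formal, but ruling out the possibility that the radical of the big algebra coincides with that of the point-scheme corner requires either an explicit nilpotent element involving the mixed blocks $N,N^{\top}$ or a dimension count comparing $\dim_{\FF_2}\overline{\FF_2\sX}$ (bounded above using Proposition~\ref{prop:design-A2-structure}(ii) and the known $2$-ranks of $A_1$, $A_2$ and $N$) against the value $10-1=9$ that would correspond to $\Rad(\FF_2\sX)=\Rad(\FF_2\sY)$; I expect the $2$-rank of $N$ to be the key numerical input that makes this count come out strictly below $9$.
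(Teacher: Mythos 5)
Parts (i) and (iii) of your proposal are fine and essentially coincide with the paper's argument: (i) is Proposition~\ref{prop:GQ-AS-radical}(ii) plus the corner-algebra inclusion from Proposition~\ref{prop:design-A2-structure}(iii), and (iii) is just Theorem~\ref{thm:Sharafdini} applied to the non-semisimple algebra $\FF_2\sX$. The problem is part (ii), which you yourself identify as the crux but do not actually prove. The paper settles it by exhibiting a concrete element: $u=\sigma_2\sigma_7$, which a direct computation shows to be nonzero with $u^2=0$, to lie in $\Rad(\FF_2\sX)$, and to lie outside $\FF_2\sY$ (note that via Proposition~\ref{prop:basic-matrix-identities}(iv) with $(N_1,P_1)=(2,1)$ one has $A_1N\equiv J_{P,B}-N\pmod 2$, so $u\equiv\sigma_8$). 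Your proposal only lists candidate strategies: you gesture at an element built from the mixed blocks without producing and verifying one, and your fallback dimension count hinges on the $2$-rank of $N$, which you leave as an explicit unknown (``$15-\binom{?}{?}$'', ``$2$-rank $6$ or similar'') and for which you never explain how it would force $\dim_{\FF_2}\bigl(\FF_2\sX/\Rad(\FF_2\sX)\bigr)<9$ --- the dimension of the semisimple quotient is not the rank of any single matrix, so that step needs a genuine argument, not just a numerical input. Moreover, whichever element you pick, nilpotency alone does not place it in the radical of a noncommutative algebra; membership in $\Rad(\FF_2\sX)$ has to be checked (the paper appeals to a direct computation in the $10$-dimensional algebra for this).

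There is also a computational error in your sketch that would derail the route you propose. For $\mathrm{GQ}(2,2)$ the SRD parameters are $s_2=3$, $a_2=1$, $b_2=0$, so Proposition~\ref{prop:basic-matrix-identities}(ii) gives $NN^{\top}=3I_P+A_1\equiv I_P+A_1\pmod 2$, i.e.\ $\sigma_7\sigma_9\equiv\sigma_1+\sigma_2$, not $\sigma_3$; the formula $NN^{\top}=6I_P+3A_1+\mu(J_P-I_P-A_1)$ you use conflates $NN^{\top}$ with the expansion of $A_1^2$ in the Bose--Mesner algebra. So as it stands the strict inequality $\dim_{\FF_2}\Rad(\FF_2\sX)\ge 2$ is not established by your argument: the missing idea is the explicit mixed-block radical element (or a completed, correct rank/dimension count replacing it).
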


% \begin{proof}
% Statement (i) is Proposition~\ref{prop:GQ-AS-radical}(ii), together with the inclusion
% $\FF_2\sY\subseteq \FF_2\sX$. By Section~\ref{sec:design-alg-2}, the mixed relations between
% the point and block fibers give rise to a nilpotent element of $\FF_2\sX$ which does not
% belong to $\FF_2\sY$, and whose square lies in the point fiber. This shows that
% $\Rad(\FF_2\sX)\neq \Rad(\FF_2\sY)$ and $\Rad(\FF_2\sX)^2\neq 0$, whence (ii). Statement (iii) follows
% from Sharafdini's criterion \cite{Sharafdini} applied to the non-semisimple algebra
% $\FF_2\sX$.
% \end{proof}
\begin{proof}
Statement~(i) is Proposition~\ref{prop:GQ-AS-radical}(ii), together with the inclusion
$\FF_2\sY\subseteq \FF_2\sX$. By Section~\ref{sec:design-alg-2}, the mixed relations between
the point and block fibers give rise to an element $u\in\Rad(\FF_2\sX)$ which does not
belong to $\FF_2\sY$, and whose square lies in the point fiber.

More concretely, a direct computation in the coherent configuration of
$\mathrm{GQ}(2,2)$ over $\FF_2$ shows that one may take $u = \sigma_2 \sigma_7$, where $\sigma_2$
is the adjacency matrix of the point graph and $\sigma_7$ is the point–block incidence matrix.
This element is nonzero and nilpotent (in fact $u^2=0$), so $u^2$ lies in the $\FF_2$-span of
$\{I_{15},A_1,J_{15}\}$ (that is, in $\FF_2\sY$), while $u\notin\FF_2\sY$.
Thus $u\in\Rad(\FF_2\sX)\setminus\Rad(\FF_2\sY)$, which implies
$\Rad(\FF_2\sX) \neq \Rad(\FF_2\sY)$, whence~(ii).

Statement~(iii) follows from Sharafdini's criterion~\cite{Sharafdini} applied to the
non-semisimple algebra $\FF_2\sX$.
\end{proof}

\medskip
In this paper we deliberately restrict attention to the structural and
conceptual aspects of modular representations of coherent configurations of
type $[3,2;3]$. In particular, we do not determine the Gabriel quivers of the
modular adjacency algebras, nor do we include the extensive computer
calculations that motivated this work. Even for the smallest example arising
from $\mathrm{GQ}(2,2)$, we stop short of describing the full
$2$-modular representation type and its quiver.

A complete description of the $2$-modular representation type of $\FF_2\sX$, including the
explicit Wedderburn decomposition of $\FF_2\sX/\Rad(\FF_2\sX)$ and the Gabriel quiver of $\FF_2\sX$, is therefore still missing, and we record it as the following problem.

\begin{problem}\label{prob:GQ-A2-quiver}
Let $\sX$ be the coherent configuration of type $[3,2;3]$ arising from the
generalized quadrangle $\mathrm{GQ}(2,2)$, and let $\FF_2\sX$ be its
$2$-modular adjacency algebra. Determine:
\begin{enumerate}[\rm(a)]
\item the Wedderburn decomposition of the semisimple quotient $\FF_2\sX/\Rad(\FF_2\sX)$;
\item the dimensions and Loewy series of the projective indecomposable
$\FF_2\sX$-modules;
\item the Gabriel quiver of $\FF_2\sX$.
\end{enumerate}
In particular, decide whether $\FF_2\sX/\Rad(\FF_2\sX)$ is a direct product of finite fields or
contains a matrix algebra over a finite field, and describe explicitly how the radical
$\Rad(\FF_2\sX)$ links the simple components arising from the point and block fibers.
\end{problem}

The geometry $\mathrm{GQ}(2,2)$ thus provides a natural and concrete test case for the
general modular representation theory of coherent configurations of type $[3,2;3]$,
situated at the intersection of finite geometry, the theory of association schemes and
the modular representation theory of table algebras
\cite{AradFismanMuzychuk,HanakiModularRep,HanakiFrame,HanakiMiyazakiShimabukuroBIB,ShimabukuroSym2}.

\begin{remark}[What is known and what remains open for $\mathrm{GQ}(2,2)$]\label{rem:GQ22-summary}
Let $\sX$ be the coherent configuration of type $[3,2;3]$ arising from the generalized
quadrangle $\mathrm{GQ}(2,2)$.
The results obtained so far can be summarized as follows.
\begin{enumerate}[(1)]
\item By Corollary~\ref{cor:GQ-AS-semisimple} and Proposition~\ref{prop:GQ-frame-AS}, the only primes that can cause
      non-semisimplicity of the point scheme are $p\in\{2,3,5\}$, and for
      $\mathrm{pg}(2,2,1)$ the case $p=2$ falls into situation~\textup{(R)} of
      Theorem~\ref{thm:radical-pg}. In particular, the $2$-modular adjacency algebra
      $\mathbb{F}_2 \sY$ of the point scheme is not semisimple.
\item Proposition~\ref{prop:GQ-AS-radical}(ii) shows that
      \[
        \Rad(\mathbb{F}_2 \sY)=\mathbb{F}_2\cdot B,\qquad B^2=0,
      \]
      so $\Rad(\mathbb{F}_2 \sY)$ is one-dimensional, and the natural inclusion
      $\mathbb{F}_2 \sY\subseteq \FF_2\sX$ yields
      \[
        \Rad(\mathbb{F}_2 \sY)\subseteq \Rad(\FF_2\sX).
      \]
      In particular $\Rad(\FF_2\sX)\neq 0$.
\item Proposition~\ref{prop:GQ-A2-partial}(ii)
shows that $\Rad(\FF_2\sX)$ is nonzero and strictly larger
than $\Rad(\FF_2\sY)$.
Furthermore, our computational results show that $\Rad(\FF_2\sX)$ is \emph{not} square-zero.
In fact, we have determined the dimensions as follows:
\[
  \dim_{\FF_2} \Rad(\FF_2\sX) = 4, \qquad \dim_{\FF_2} \Rad(\FF_2\sX)^2 = 2.
\]
Hence the Loewy length of $\FF_2\sX$ is at least $3$.
\item Since $\dim_{\FF_2}\Rad(\FF_2\sX) = 4$, the semisimple quotient has dimension $10 - 4 = 6$.
However, the explicit Wedderburn decomposition of this quotient (e.g., as a sum of matrix algebras) and the Gabriel quiver of $\FF_2\sX$ remain to be determined. These questions are formulated in Problem~\ref{prob:GQ-A2-quiver}.
\end{enumerate}
Thus even for the smallest example $\mathrm{GQ}(2,2)$ the interaction between the
point and block fibres through the mixed relations cannot yet be fully captured by
the methods developed here.
\end{remark}

\section{Conclusion and further problems}\label{sec:conclusion}

We conclude by summarizing the main results and indicating some directions
for further research.

\subsection*{Summary of results}

In Sections~\ref{sec:frame-pg} and~\ref{sec:prank} we developed a systematic approach to the modular
representation theory of the rank-$3$ association scheme $\sY$ arising from a
partial geometry $\mathrm{pg}(s,t,\alpha)$. Using the basic matrix identities
for strongly regular designs and Higman's parameter equations, we obtained an
explicit closed formula for the Frame number of $\sY$ in terms of $v$ and
$s+t+1-\alpha$, and we characterized those primes $p$ for which the adjacency
algebra over $\mathbb{F}_p$ fails to be semisimple.

The main structural result is Theorem~\ref{thm:radical-pg}, which gives a complete description
of the Jacobson radical of $\mathbb{F}_pY$ in four cases depending on the
divisibility of $v$ and $s+t+1-\alpha$ by $p$. In particular, the radical is
generated by at most two explicit elements, and its dimension is determined in
each case. In Section~\ref{sec:prank} we related these radicals to the $p$-ranks of the
adjacency matrix of the point graph and obtained generic formulas for the
$p$-rank outside a finite set of exceptional primes.

In Sections~\ref{sec:design-alg-2} and~\ref{sec:gq22} we turned to coherent configurations of type $[3,2;3]$
attached to strongly regular designs. We described the complex adjacency
algebra of such a configuration in terms of the point and block schemes, and
we showed how the Frame number controls the possible modular behaviour. For
the smallest generalized quadrangle $\mathrm{GQ}(2,2)$ we combined these
general results with a detailed analysis of the point scheme to obtain partial
information about the $2$-modular adjacency algebra $\mathbb{F}_2\sX$,
summarized in Remark~\ref{rem:GQ22-summary}, and formulated the remaining
questions as Problem~\ref{prob:GQ-A2-quiver}.

\subsection*{Further problems}

We list some natural problems suggested by this work.

\begin{itemize}
\item[(1)] \emph{Complete solution of Problem~\ref{prob:GQ-A2-quiver}.}
  Determine the full $2$-modular representation type of $\FF_2\sX$ for
  $\mathrm{GQ}(2,2)$, including the Wedderburn decomposition of $\FF_2\sX/\Rad(\FF_2\sX)$
  and the Gabriel quiver of $\FF_2\sX$, and describe explicitly how the radical
  links the simple components coming from the point and block fibres.

\item[(2)] \emph{Other small examples.}
  Extend the analysis of Section~\ref{sec:gq22} to other partial geometries, such as
  generalized quadrangles $\mathrm{GQ}(2,4)$ or $\mathrm{GQ}(3,3)$, and
  coherent configurations of type $[3,2;3]$ arising from them. Even partial
  information on the modular adjacency algebras in these cases would provide
  useful test data for the general theory.

\item[(3)] \emph{Frame numbers for coherent configurations.}
  For coherent configurations of type $[3,2;3]$ coming from strongly regular
  designs, derive general formulas or effective bounds for the Frame number
  $F_{\mathrm{CC}}(\sX)$ in terms of the parameters of the underlying design, and
  relate them to the Frame number of the point and block schemes.

\item[(4)] \emph{Geometric interpretation of radicals.}
  Give a more conceptual, geometric description of the radical elements
  occurring in Theorem~\ref{thm:radical-pg} and in Proposition~\ref{prop:GQ-A2-partial}, for instance in terms of
  incidence relations or orbits of the automorphism group.

\item[(5)] \emph{Representation-theoretic symmetries.}
  Investigate the interaction between the modular adjacency algebras of $\sY$
  and of the coherent configuration $\sX$ with the modular representation
  theory of the automorphism group of the underlying partial geometry.
\end{itemize}

These questions suggest that the modular representation theory of partial
geometries and of coherent configurations of type $[3,2;3]$ should be viewed
as part of a larger picture linking finite geometry, association schemes and
the theory of table algebras.

\end{document}